\newcolumntype{R}[1]{>{\raggedleft\arraybackslash }b{#1}}
\newcolumntype{L}[1]{>{\raggedright\arraybackslash }b{#1}}
\newcolumntype{C}[1]{>{\centering\arraybackslash }b{#1}}
\newcounter{minutes}\setcounter{minutes}{\time}
\newcounter{hours}\setcounter{hours}{\time}
\newtheorem{theorem}{Theorem}
\newtheorem{lemma}{Lemma}
\newtheorem{corollary}{Corollary}
\newtheorem{remark}{Remark}
\newtheorem{example}{Example}
\title[Positivity of certain classes of functions related to the Fox H-functions ]{Positivity of certain classes of functions related to the Fox H-functions with applications}
\author[K. Mehrez]{Khaled Mehrez}
\address{Khaled Mehrez. D\'epartement de Math\'ematiques, Facult\'e des Sciences de Tunis, Universit\'e Tunis El Manar, Tunisia.}
\email{k.mehrez@yahoo.fr}
\keywords{Fox-Wright function, Fox's H-function, , complete monotonicity}
\subjclass[2010]{33C20; 33E20; 26D07; 26A42; 44A10.}
\begin{document}

\def\thefootnote{}
\footnotetext{ \texttt{File:~\jobname .tex,
          printed: \number\year-0\number\month-\number\day,
          \thehours.\ifnum\theminutes<10{0}\fi\theminutes}
} \makeatletter\def\thefootnote{\@arabic\c@footnote}\makeatother

\maketitle

\begin{abstract}
Our aim in this paper is to present a set of sufficient conditions  to be 
imposed on the parameters of the Fox H-functions which allow us to conclude 
that it is non-negative. As applications, some classes of completely monotonic and positive definite functions involving the H-function are established and various new facts regarding the Fox-Wright functions, including complete monotonicity, logarithmic completely monotonicity and monotonicity of ratios are considered.
\end{abstract}
\section{Introduction}

In 1961, Fox \cite{FOX} defined the $H$-function in his celebrated studies of symmetrical Fourier kernels as the Mellin-Barnes-type path integral
\begin{equation}
H_{q,p}^{n,m}[z]:=H_{q,p}^{n,m}\left[z\Big|^{({\bf b}_q, {\bf B}_q)}_{({\bf a}_p, {\bf A}_p)}\right]=H_{q,p}^{n,m}\left[z\Big|^{( b_j, B_j)_{1,q}}_{(a_i, A_i)_{1,p}}\right]=\frac{1}{2i\pi}\int_\mathcal{L}\mathcal{H}_{q,p}^{n,m}(s) z^{-s} ds,
\end{equation}
where 
$$\mathcal{H}_{q,p}^{n,m}(s)=\frac{\prod_{j=1}^n\Gamma (A_j s+a_j)\prod_{j=1}^m\Gamma(1-b_j-B_j s)}{\prod_{j=m+1}^q\Gamma (B_j s+b_j)\prod_{j=n+1}^p\Gamma(1-a_j-A_j s)}$$
is the Mellin transform of the Fox H-function $H_{q,p}^{n,m}[z]$ and $\mathcal{L}$ is the infinite contour in the complex plane which separates the poles
$$a_{il}=-\frac{a_i+l}{A_i},\;\;(i=1,...,n; l=0,1,2,...)$$
of the Gamma function $\Gamma(a_i+A_i s)$  to the left of  $\mathcal{L}$ and the poles
$$b_{jk}=\frac{1-b_j+k}{B_j},\;\;(j=1,...,m; k=0,1,2,...)$$
to the right of  $\mathcal{L}.$

The Fox H-function plays an important role in various branches of applied mathematics , many areas of theoretical physics, statistical distribution theory, and engineering sciences. The importance of the Fox H-function can be found in \cite{1, 11, 12, 13, MA, 29}, for instance.

Here, and in what follows, we use the Fox-Wright function ${}_p\Psi_q[.]?$ which is a generalization of hypergeometric function, is defined as follows \cite[p. 4, Eq. (2.4)]{Z}:
   \begin{equation}\label{11}
      {}_p\Psi_q\Big[\begin{array}{c} (a_1,A_1), \cdots, (a_p,A_p)\\(b_1,B_1), \cdots, (b_q,B_q) \end{array}
			   \Big|z \Big] 
			 = {}_p\Psi_q\Big[ \begin{array}{c}({\bf a}_p, {\bf A}_p)\\({\bf b}_q, {\bf B}_q)
			   \end{array} \Big|z \Big] 
       = \sum_{k \geq 0} \frac{\prod\limits_{l=1}^p\Gamma(a_l+kA_l)}
			   {\prod\limits_{l=1}^q\Gamma(b_l+kB_l)}\frac{z^k}{k!},
   \end{equation}
where $A_j \geq 0,\; j=1,\cdots, p; B_l\geq 0,\;\textrm{and}\;l=1,\cdots,q$. The convergence conditions 
and convergence radius of the series at the right-hand side of \eqref{11} immediately follow from the 
known asymptotic of the Euler Gamma--function. The defining series in \eqref{11} converges in the whole 
complex $z$-plane when
   \[ \Delta = 1+\sum_{j=1}^q B_j-\sum_{i=1}^p A_i>0. \]
If $\Delta=-1,$ then the series in (\ref{11}) converges for $|z|<\rho,$ and $|z|=\rho$ under the 
condition $\Re(\mu)>\frac12,$ (see \cite{16} for details), where 
   \[ \rho = \left(\prod_{i=1}^p A_i^{-A_i}\right)\left(\prod_{j=1}^q B_j^{B_j}\right),\quad 
			\mu  = \sum_{j=1}^q b_j-\sum_{k=1}^p a_k+\frac{p-q}2. \]
The Fox--Wright function extends the generalized hypergeometric function ${}_p F_q[z]$ which power series 
form reads 
   \[ {}_p F_q\Big[\begin{array}{c} {\bf a}_p \\ {\bf b}_q \end{array}\Big| z \Big]
			   = \sum_{k \geq 0}\frac{\prod\limits_{l=1}^p(a_l)_k}{\prod\limits_{l=1}^q(b_l)_k}\frac{z^k}{k!},\]
where, as usual, we make use of the Pochhammer symbol (or raising factorial) 
   \[ (\tau)_0 = 1; \quad (\tau)_k = \tau(\tau+1)\cdots(\tau+k-1) 
	             = \frac{\Gamma(\tau+k)}{\Gamma(\tau)},\qquad k\in\mathbb{N}. \]
In the special case $A_r=B_s=1$ the Fox--Wright function ${}_p\Psi_q[z]$ reduces (up to the 
multiplicative constant) to the generalized hypergeometric function 
   \[ {}_p\Psi_q\Big[\begin{array}{c} ({\bf a}_p,\mathbf 1)\\ ({\bf b}_q,\mathbf 1) \end{array} 
			    \Big| z \Big] 
			  = \frac{\Gamma(a_1)\cdots\Gamma(a_p)}{\Gamma(b_1)\cdots\Gamma(b_q)}\,
					{}_p F_q\Big[\begin{array}{c} {\bf a}_p \\ {\bf b}_q \end{array} \Big| z \Big]. \] 
 %%%%%%%%%%%%%%%%%%%%%%%%%%%%%%%%%%%%%%%%%%%%%%%%%%%%%%%%%%%%%%%%%%%%%%%%%%%%%%%%%%%%

%%%%%%%%%%%%%%%%%%%%%%%%%%%%%%%%%%%%%%%%%%%%%%%%%%%%%%%%%%%%%%%%%%%%%%%%%%%%%%%%
The three-parameter Mittag-Leffler type function  $E_{\alpha,\beta}^{\gamma}(z)$ is defined by (see \cite{PA})
\begin{equation}\label{aki}
E_{\alpha,\beta}^{\gamma}(z):=\sum_{k=0}^\infty \frac{\Gamma(\gamma+k)}{\Gamma(\gamma)\Gamma(\beta+k\alpha)}\frac{z^k}{k!},\;\alpha>0, \beta,\gamma\in\mathbb{C}.
\end{equation}
However, we have that
\begin{equation}
E_{\alpha,\beta}^{\gamma}(z)=\frac{1}{\Gamma(\gamma)}{}_1\Psi_1\Big[_{(\beta, \alpha)}^{(\gamma, 1)}\Big|z \Big].
\end{equation}

The H-function contains, as special cases,  all of the functions which are expressible in terms of the
G-function. More importantly, it contains  the Fox-Wright generalized hypergeometric function defined in (\ref{11}), the generalized Mittag-Leffler functions, etc. For example, the function ${}_p\Psi_q[.]$ is one of these special case of H-function. By the definition (\ref{11}) it is easily extended to the complex plane as follows \cite[Eq. 1.31]{AA},
\begin{equation}\label{label}
{}_p\Psi_q\Big[_{({\bf b}_q,{\bf B}_q)}^{({\bf a}_p, {\bf A}_p)}\Big|z \Big]=H_{p,q+1}^{1,q}\left(-z\Big|_{(0,1),({\bf B}_q,1-{\bf b}_q)}^{({\bf A}_p,1-{\bf a}_p)}\right).
\end{equation}
The representation (\ref{label}) holds true only for positive values of the parameters $A_i$ and $B_j$. 

It is straightforward to verify that 
\begin{equation}
E_{\alpha,\beta}^{\gamma}(z)=\frac{1}{\Gamma(\gamma)}H_{1,2}^{1,1}\left[z\Big|^{(1-\gamma, 1)}_{(0, 1),(1-\beta,\alpha)}\right].
\end{equation}
 
The special case for which the  H-function reduces to the Meijer G-function is when $A_1=...=A_p=B_1=...=B_q=A,\;A>0.$ In this case,
\begin{equation}\label{,,,}
H_{q,p}^{m,n}\left(z\Big|^{({\bf B}_q,{\bf b}_q)}_{({\bf A}_p,{\bf a}_p)}\right)=\frac{1}{A}G_{p,q}^{m,n}\left(z^{1/A}\Big|^{{\bf b}_q}_{{\bf a}_p}\right).
\end{equation}
Additionally, when setting $A_i=B_j=1$ in (\ref{label}) (or $A=1$ in (\ref{,,,})), the H- and Fox-Wright functions  turn readily into the Meijer G-function.

For the reader's convenience, we first recall the definition of the completely monotone functions: A real valued function $f,$ defined on an interval $I,$ is called completely monotonic on $I,$ if $f$ has derivatives of all orders and satisfies
\begin{equation}
(-1)^nf^{(n)}(x)\geq0,\;n\in\mathbb{N}_0,\;x\in I.
\end{equation}

The  celebrated  Bernstein  Characterization  Theorem ( see, e.g., \cite{54})  that says that a function $f:(0,\infty)\rightarrow\mathbb{R}$ is completely monotone if and only if it can be represented as the Laplace transform of a non-negative measure (non-negative function or generalized function).

An infinitely differentiable function $f:I\longrightarrow[0,\infty)$ is called a Bernstein function on an interval $I,$ if $f^\prime$ is completely monotonic on $I.$ 

A positive function $f$ is said to be logarithmically completely monotonic on an interval $I$ if its logarithm $\log f$ satisfies
\begin{equation}
(-1)^n(\log f)^{(n)}(x)\geq0,\;n\in\mathbb{N},\;\;\textrm{and}\;\;x\in I.
\end{equation}
In \cite[Theorem 1.1]{Berg} and \cite[Theorem 4]{Y}, it was found and verified once again that a logarithmically completely monotonic function must be completely monotonic, but not conversely.

A continuous function $f:\mathbb{R}^d\longrightarrow\mathbb{C}$ is called positive definite function, if for all $N\in\mathbb{N},$ all sets of pairwise distinct centers $X=\left\{x_1,...,x_N\right\}\subseteq\mathbb{R}^d$ and $z=\left\{\xi_1,...,\xi_N\right\}\subset \mathbb{C}^N,$ the quadratic form
$$\sum_{j=1}^N\sum_{k=1}^N\xi_j\bar{\xi_k}f(x_j-x_k)$$
is non-negative.

The connection between positive definite radial and completely monotone functions, which was first pointed out by Schoenberg in 1938 (see \cite[Theorem 7.1]{W}), that is, a function $f$ is completely monotonic on $(0,\infty)$ if and only if $f(\left\|.\right\|^2)$ is positive definite on every $\mathbb{R}^d.$ For further details, we refer to Wendland \cite{W}.

Our aim in this investigation is to give sufficient conditions so that some classes of functions related to the Fox H-functions are non-negative functions. Applying this results, completely monotonicity and positive definite properties for some classes of functions associated to the H-function are derived and various new facts regarding the Fox-Wright functions,  including complete monotonicity, logarithmic completely monotonicity and monotonicity of ratios are established.

\section{Useful Lemmas}

In the proof of the main results we will need the following  lemmas, which we collect in this section. The first Lemma is about some properties for the Fox H-functions, see \cite{MM} for more details. 

\begin{lemma}\label{l1}Properties of Fox H-functions:\\
\noindent Property 1.
$$H_{q,p}^{n,m}\left[z^{-1}\Big|^{({\bf b}_q, {\bf B}_q)}_{({\bf a}_p, {\bf A}_p)}\right]=H_{p,q}^{m,n}\left[z\Big|_{(1-{\bf b}_q, {\bf B}_q)}^{(1-{\bf a}_p, {\bf A}_p)}\right].$$
\noindent Property 2.
$$ H_{q,p}^{n,m}\left[z\Big|^{({\bf b}_q, {\bf B}_q)}_{({\bf a}_p, {\bf A}_p)}\right]=k H_{q,p}^{n,m}\left[z^k\Big|^{({\bf b}_q, k{\bf B}_q)}_{({\bf a}_p, k{\bf A}_p)}\right], k>0.
$$
\noindent Property 3.
$$ H_{q,p}^{n,m}\left[z\Big|^{\;\;\;\;\;({\bf b}_q, {\bf B}_q)}_{({\bf a}_{p-1}, {\bf A}_{p-1}),(b_1,B_1)}\right]=H_{q-1,p-1}^{n-1,m}\left[z\Big|^{({\bf b}_j, {\bf B}_j)_{2,q}}_{({\bf a}_{i}, {\bf A}_{i})_{1,p-1}}\right].$$
\noindent Property 4. For $\sigma\in\mathbb{C},$ the following relation holds
$$ z^\sigma H_{q,p}^{n,m}\left[z\Big|^{({\bf b}_q, {\bf B}_q)}_{({\bf a}_p, {\bf A}_p)}\right]=H_{q,p}^{n,m}\left[z\Big|^{({\bf b}_q+\sigma{\bf B}_q, {\bf B}_q)}_{({\bf a}_p+\sigma{\bf BA}_q, {\bf A}_p)}\right].$$
\end{lemma}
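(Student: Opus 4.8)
The statement to be proved is Lemma~\ref{l1}, a collection of four standard transformation properties of the Fox $H$-function. My plan is to establish each property directly from the Mellin--Barnes integral definition, exploiting in each case an elementary change of variables in the contour integral or a cancellation between numerator and denominator Gamma factors.

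\textbf{Property 1 (inversion of the argument).} The plan is to start from the defining integral for $H_{q,p}^{n,m}[z^{-1}]$ and substitute $s \mapsto -s$. Under this substitution $z^{-s}$ becomes $z^{s}$ applied to $z^{-1}$, i.e. $(z^{-1})^{-s} = z^{s} \mapsto z^{-s}$ after the sign flip, and each factor $\Gamma(A_j s + a_j)$ becomes $\Gamma(-A_j s + a_j) = \Gamma\big(A_j s + (1-a_j)\big)\big|_{\text{with }s\to -s}$ — more precisely one checks that the product in the numerator of $\mathcal{H}$ turns into the product defining $\mathcal{H}_{p,q}^{m,n}$ with parameters $(1-{\bf a}_p,{\bf A}_p)$ and $(1-{\bf b}_q,{\bf B}_q)$, while the roles of $(n,m)$ and $(m,n)$ are interchanged because the $\Gamma(A_js+a_j)$ factors and the $\Gamma(1-b_j-B_js)$ factors swap places. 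The orientation of the contour $\mathcal{L}$ reverses under $s\mapsto -s$, which is compensated by the factor $-1$ coming from $ds \mapsto -ds$; one must check that the pole-separation requirement is preserved, which it is because left poles and right poles are interchanged consistently with the interchange of the $\Gamma$ factors.

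\textbf{Property 2 (power scaling of the argument).} Here the plan is to write $z = (z^k)^{1/k}$ is awkward; instead substitute $s = k\sigma$ in the integral defining $H_{q,p}^{n,m}[z^k\,|\,\cdots]$ with parameters $k{\bf A}_p, k{\bf B}_q$. Then $ds = k\,d\sigma$, the factor $z^{-ks}$ — wait, rather $(z^k)^{-\sigma}$ becomes $z^{-k\sigma}$, and each $\Gamma(kA_j\sigma + a_j)$ matches $\Gamma(A_j s + a_j)$ after setting $s = k\sigma$. The Jacobian $k\,d\sigma$ from $ds$ accounts exactly for the factor $k$ in front; one then identifies the resulting integral as $k^{-1}$ times the right-hand side, or rather the stated identity emerges after rearranging. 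I will verify the contour remains admissible: scaling poles by $1/k$ with $k>0$ preserves their side relative to the (rescaled) contour.

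\textbf{Property 3 (reduction when a numerator and denominator parameter pair coincides).} This is the cancellation identity: when a pair $(b_1, B_1)$ appears among the lower parameters ${\bf a}$ (the ``$\Gamma(1-a_j-A_j s)$'' block, for $j$ from $n+1$ to $p$) and also among the upper parameters ${\bf b}$ (the ``$\Gamma(1-b_j-B_js)$'' block), the corresponding Gamma factors are literally equal and cancel. The plan is simply to write out $\mathcal{H}_{q,p}^{n,m}$ with the indicated repeated parameter, cancel the common $\Gamma$ factor from numerator and denominator, observe that this lowers both $n$ and $p$ by one (hence $n \to n-1$, $p \to p-1$) while leaving $m$ and the contour's separation property intact, and recognize the result as $\mathcal{H}_{q-1,p-1}^{n-1,m}$ with the remaining parameters reindexed as stated.

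\textbf{Property 4 (multiplication by a power of $z$).} The plan is to move $z^\sigma$ inside the integral: $z^\sigma \cdot z^{-s} = z^{-(s-\sigma)}$, then substitute $t = s - \sigma$. This shifts every argument $A_j s + a_j = A_j t + (a_j + \sigma A_j)$ and $1 - b_j - B_j s = 1 - (b_j + \sigma B_j) - B_j t$, so the parameters ${\bf a}_p, {\bf b}_q$ are replaced by ${\bf a}_p + \sigma{\bf A}_p$, ${\bf b}_q + \sigma{\bf B}_q$ — this is the content of the stated formula (the ``${\bf BA}_q$'' and index mismatches in the excerpt are evidently typos for $\sigma {\bf A}_p$). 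Since the contour merely translates by $\sigma$, admissibility is preserved. The main obstacle throughout is bookkeeping rather than depth: one must track carefully which Gamma block each parameter belongs to and how the upper indices $(n,m)$ permute, and confirm in each case that the contour $\mathcal{L}$ can be chosen to satisfy the separation condition after the transformation — this is routine but error-prone, and is where I would spend the most care. Since all four properties are classical (they appear in \cite{MM} and standard references such as Kilbas--Saigo), an alternative and shorter route is simply to cite those sources; I will present the short direct verifications above for completeness.
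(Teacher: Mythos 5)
The paper offers no proof of this lemma at all: it simply points the reader to a reference (and the citation given, \cite{MM}, is in fact the Mitrinovi\'c--Pecari\'c--Fink inequalities monograph; the relevant source is \cite{AA} or Kilbas--Saigo). So your direct Mellin--Barnes verification is by construction a different route, and for Properties 1, 2 and 4 it is correct: the substitutions $s\mapsto -s$, $s\mapsto s/k$ and $s\mapsto s-\sigma$ in the kernel $\mathcal{H}_{q,p}^{n,m}(s)z^{-s}$ do exactly what you describe, and you are right that ``${\bf a}_p+\sigma{\bf BA}_q$'' in Property 4 is a typo for ${\bf a}_p+\sigma{\bf A}_p$.

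Your argument for Property 3, however, has a concrete bookkeeping error, precisely of the kind you flagged as the main risk. You locate the cancelling pair as a factor $\Gamma(1-a_p-A_ps)$ from the denominator block $\prod_{j=n+1}^{p}\Gamma(1-a_j-A_js)$ against a factor $\Gamma(1-b_1-B_1s)$ from the numerator block $\prod_{j=1}^{m}\Gamma(1-b_j-B_js)$. Removing one factor from the first block leaves $n$ unchanged and lowers $p$ by one; removing one factor from the second block lowers $m$ by one. So the cancellation you describe produces $H_{q-1,p-1}^{n,\,m-1}$, not the asserted $H_{q-1,p-1}^{n-1,\,m}$; your stated conclusion ($n\to n-1$, $m$ fixed) contradicts your own identification of the blocks. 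To obtain the index shift $n\to n-1$, $m\to m$ --- which is the version the paper actually uses, e.g.\ in the proof of Corollary \ref{c1} where $H_{3,2}^{1,2}$ collapses to $H_{2,1}^{0,2}$ --- one must instead cancel a \emph{numerator} factor $\Gamma(A_1s+a_1)$ (from $\prod_{j=1}^{n}\Gamma(A_js+a_j)$, so $n\geq 1$) against a \emph{denominator} factor $\Gamma(B_qs+b_q)$ (from $\prod_{j=m+1}^{q}\Gamma(B_js+b_j)$, so $q>m$) under the coincidence $(a_1,A_1)=(b_q,B_q)$; these two Gamma factors are literally equal, and their removal lowers $n$, $p$, $q$ by one while leaving $m$ untouched. (The lemma's own display is partly to blame: its placement of the repeated pair and its surviving-parameter lists do not match its superscripts either.) With the correct pair of blocks identified, your cancellation argument goes through verbatim; as written, it proves a different, albeit also valid, reduction formula rather than the stated one.
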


As Lemma \ref{l12} indicates, the Hankel transform plays a central role in our investigation. We will need the following result for the Hankel transform for the H-function, see \cite[Formula (2.45), p. 57]{AA}.
\begin{lemma}\label{l12}
 We introduce the following parameters:
\begin{equation}\label{TH}
\begin{split}
C&=\sum_{j=1}^pA_j-\sum_{j=1}^q B_j\\
D&=\sum_{j=1}^m B_j-\sum_{j=m+1}^q B_j+\sum_{j=1}^n A_j-\sum_{j=n+1}^p A_j.
\end{split}
\end{equation}
Suppose that $D>0$ or $D=C=0$ and $\Re(\mu)<-1.$ Let $\nu, \rho\in\mathbb{C}$ such that
$$\Re(\rho)+\Re(\nu)+\min_{1\leq j\leq n}\left[\frac{\Re(a_j)}{A_j}\right]>-1,$$
and
$$\Re(\rho)+\min_{1\leq j\leq m}\left[\frac{1}{B_j}-\frac{\Re(b_j)}{B_j}\right]<\frac{3}{2},$$
Then for $x,b>0,$ we have
\begin{equation}\label{BBBB}\int_0^\infty r^{\rho-1} J_\nu(xr) H_{q,p}^{n,m}\left[br^\sigma\Big|^{({\bf b}_q, {\bf B}_q)}_{({\bf a}_{p}, {\bf A}_{p})}\right]dr=\frac{2^{\rho-1}}{x^\rho}H_{q+2,p}^{n,m+1}\left[b\left(\frac{2}{x}\right)^\sigma\Big|^{(1-\frac{\rho+\nu}{2},\frac{\sigma}{2}),({\bf b}_q, {\bf B}_q),(1-\frac{\rho-\nu}{2},\frac{\sigma}{2})}_{\;\;\;\;\;\;({\bf a}_{p}, {\bf A}_{p})}\right].
\end{equation}
\end{lemma}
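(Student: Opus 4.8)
The plan is to reduce the Hankel transform on the left of \eqref{BBBB} to a one-dimensional Mellin--Barnes manipulation. First I would replace the H-function under the integral sign by its defining contour integral, writing
\[
H_{q,p}^{n,m}\!\left[br^\sigma\Big|^{({\bf b}_q, {\bf B}_q)}_{({\bf a}_{p}, {\bf A}_{p})}\right]=\frac{1}{2i\pi}\int_{\mathcal L}\mathcal H_{q,p}^{n,m}(s)\,(br^\sigma)^{-s}\,ds,
\]
and I would also use the classical Mellin--Barnes (equivalently, H-function) representation of $J_\nu$. Inserting these into $\int_0^\infty r^{\rho-1}J_\nu(xr)\,H_{q,p}^{n,m}[br^\sigma]\,dr$ produces a double integral in $r$ and in the Mellin variable $s$. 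The first task is to justify interchanging the $r$-integration with the $s$-integration: under the growth hypothesis $D>0$ (or $D=C=0$ with $\Re(\mu)<-1$) the H-function density decays along $\mathcal L$ fast enough that, combined with the oscillatory $O(r^{-1/2})$ decay of $J_\nu$, the iterated integral is absolutely convergent, so Fubini applies.

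After the interchange, the inner $r$-integral is, up to the factor $b^{-s}$, of the form $\int_0^\infty r^{\rho-1-\sigma s}J_\nu(xr)\,dr$, which is the classical Mellin transform of the Bessel function,
\[
\int_0^\infty r^{\eta-1}J_\nu(xr)\,dr=\frac{2^{\eta-1}}{x^{\eta}}\,\frac{\Gamma\!\left(\tfrac{\nu+\eta}{2}\right)}{\Gamma\!\left(1+\tfrac{\nu-\eta}{2}\right)},
\]
valid precisely when $-\Re(\nu)<\Re(\eta)<\tfrac32$. Here $\eta=\rho-\sigma s$ with $s$ running along $\mathcal L$, and the two displayed inequalities on $\Re(\rho)$ and $\Re(\nu)$ (together with the locations of the poles $a_j$, $b_j$ that fix where $\mathcal L$ sits) are exactly what guarantees that this condition holds all along the contour. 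Substituting the evaluation back, $r$ has been integrated out and one is left with a single Mellin--Barnes integral in $s$ whose integrand is the original $\mathcal H_{q,p}^{n,m}(s)$ multiplied by the extra quotient $\Gamma\!\left(1-\tfrac{\rho+\nu}{2}-\tfrac{\sigma}{2}s\right)\big/\Gamma\!\left(\tfrac{\rho-\nu}{2}+\tfrac{\sigma}{2}s\right)$ arising from the Bessel Mellin transform (after using $\Gamma(1-z)\Gamma(z)=\pi/\sin\pi z$ to put both new Gamma factors into the "numerator" pattern of an H-function). Pulling out $2^{\rho-1}x^{-\rho}$ and comparing with the definition of $\mathcal H$, one reads off an H-function of order $H_{q+2,p}^{n,m+1}$ in which the two pairs $\big(1-\tfrac{\rho+\nu}{2},\tfrac{\sigma}{2}\big)$ and $\big(1-\tfrac{\rho-\nu}{2},\tfrac{\sigma}{2}\big)$ have been adjoined to the ${\bf b}_q$-row, exactly the right-hand side of \eqref{BBBB}.

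The main obstacle is not the formal computation but the bookkeeping needed to make it rigorous: verifying that the enlarged contour still separates the poles of the two new numerator Gamma functions from those of the original $\Gamma(A_js+a_j)$ on one side and $\Gamma(1-b_j-B_js)$ on the other, and that the interchange of integrals and the use of the Bessel Mellin transform are legitimate uniformly along $\mathcal L$. All of this is precisely what the hypotheses $D>0$ (or $D=C=0$, $\Re(\mu)<-1$) and the two inequalities relating $\Re(\rho)$, $\Re(\nu)$ to $\min_j \Re(a_j)/A_j$ and $\min_j(1-\Re(b_j))/B_j$ are designed to control, and tracking them through the estimates is the technical heart of the argument. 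Since, moreover, the identity is recorded as \cite[Formula~(2.45), p.~57]{AA}, one may simply invoke it, the above being the standard derivation.
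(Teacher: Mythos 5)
The paper offers no proof of this lemma at all: it is imported verbatim as \cite[Formula (2.45), p.~57]{AA}, so there is nothing internal to compare your argument against, and simply invoking that reference (as you note at the end) is exactly what the author does. Your Mellin--Barnes sketch is the standard derivation and is essentially sound: insert the contour representation, apply Fubini, evaluate the inner integral by the Mellin transform of $J_\nu$, and read off the enlarged H-function. One small correction: the reflection formula $\Gamma(z)\Gamma(1-z)=\pi/\sin\pi z$ is not needed to "put both new Gamma factors into the numerator pattern" --- with $\eta=\rho-\sigma s$ the Bessel transform contributes $\Gamma\bigl(\tfrac{\rho+\nu}{2}-\tfrac{\sigma}{2}s\bigr)$, which is already of the numerator type $\Gamma(1-b-Bs)$ with $b=1-\tfrac{\rho+\nu}{2}$ (raising $m$ to $m+1$), while $\Gamma\bigl(1-\tfrac{\rho-\nu}{2}+\tfrac{\sigma}{2}s\bigr)$ stays in the denominator as a $\Gamma(Bs+b)$-type factor; that is precisely why the result is $H^{n,m+1}_{q+2,p}$ rather than $H^{n,m+2}_{q+2,p}$, and it also explains the slight miswriting of the quotient in your text.
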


The asymptotic expansion of H-function listed below, see \cite[Theorem 1.2, (i) and (ii)]{AA}.

\begin{lemma}
There holds the following results:\\
\noindent {\rm (i).} If $C\geq0$ or $C<0,\; \alpha>0, |\arg(z)|<\frac{1}{2}\pi D,$  the H-function has either the asymptotic expansion at zero given by
\begin{equation}\label{asy1}
H_{q,p}^{n,m}[z]=\mathcal{O}(z^c),\;|z|\rightarrow 0,
\end{equation}
where,
\begin{equation}
c=\min_{1\leq j\leq n}\left[\frac{\Re(a_j)}{A_j}\right].
\end{equation}
\noindent {\rm (ii).} If $C\leq0$ or $C>0,\;  \alpha>0,$ then the H-function has either the asymptotic expansion at infinity given by
\begin{equation}\label{asy2}
H_{q,p}^{n,m}[z]=\mathcal{O}(z^d),\;|z|\rightarrow \infty,
\end{equation}
here,
\begin{equation}
d=\min_{1\leq j\leq m}\left[\frac{\Re(b_j)-1}{B_j}\right].
\end{equation}
\end{lemma}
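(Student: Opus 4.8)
The plan is to argue from the Mellin--Barnes representation $H^{n,m}_{q,p}[z]=\frac{1}{2\pi i}\int_{\mathcal L}\mathcal H^{n,m}_{q,p}(s)\,z^{-s}\,ds$ by displacing the contour $\mathcal L$ and summing the residues that are crossed.

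For part (i) I would push $\mathcal L$ to the left, across the poles $a_{jl}=-(a_j+l)/A_j$ of the factors $\Gamma(A_js+a_j)$, $1\le j\le n$. Since $|z^{-s}|=|z|^{-\Re s}$, the residue at a pole $s_0$ contributes a term of order $|z|^{-\Re s_0}$, so as $|z|\to0$ the dominant contribution is the residue at the rightmost such pole, which — in the generic case of simple, non-coinciding poles — occurs at $l=0$ and at the index $j$ realising $\min_{1\le j\le n}\Re(a_j)/A_j=c$; hence the residue sum is $\mathcal O(z^c)$. It then remains to show that the integral over the displaced contour is of strictly lower order, and this is exactly where the hypotheses are used: the alternative ``$C\ge0$'', or ``$C<0$ together with $\alpha>0$ and $|\arg z|<\tfrac{1}{2}\pi D$'', is precisely what guarantees — via Stirling's asymptotics for $\Gamma$ applied to each factor of $\mathcal H^{n,m}_{q,p}(s)$ on vertical lines $\Re s=\mathrm{const}$ — that $\mathcal H^{n,m}_{q,p}(s)z^{-s}$ is integrable on such a line and that this integral tends to $0$ as the line recedes to the left.

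For part (ii) one can run the mirror argument, shifting $\mathcal L$ to the right across the poles $b_{jk}=(1-b_j+k)/B_j$ of $\Gamma(1-b_j-B_js)$, $1\le j\le m$: as $|z|\to\infty$ the controlling residue is the one at the pole closest to the original contour (i.e.\ with $k=0$), which produces $\mathcal O(z^d)$, the remainder integral now being tamed by ``$C\le0$'', or ``$C>0$ and $\alpha>0$''. A slicker alternative is to deduce (ii) from (i) through the inversion Property~1 of Lemma~\ref{l1}: writing $H^{n,m}_{q,p}[z]=H^{m,n}_{p,q}[z^{-1}\mid\cdots]$ with $(a_i,A_i)\mapsto(1-a_i,A_i)$, $(b_j,B_j)\mapsto(1-b_j,B_j)$ and the pairs $(m,n)$ and $(p,q)$ interchanged, the behaviour of the left side as $|z|\to\infty$ is the behaviour of the right side as $|z^{-1}|\to0$, to which part (i) applies; the exponent $c$ attached to the transformed H-function becomes $d$ after the substitution $z\mapsto z^{-1}$. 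One must only verify that the hypotheses ``$C\le0$'' / ``$C>0,\ \alpha>0$'' for the original parameters are mapped onto the hypotheses of (i) for the transformed parameters, which is the case since $C$ changes sign under the inversion while $D$ and $\alpha$ are left essentially invariant.

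On either route the delicate point is not the residue bookkeeping but the bound on the displaced integral: after crossing finitely many poles one must show that what remains is genuinely negligible against the extracted leading power, uniformly in the sector $|\arg z|<\tfrac{1}{2}\pi D$ when $C<0$. This rests on the precise order of $\mathcal H^{n,m}_{q,p}(s)$ as $|\Im s|\to\infty$ and as $\Re s\to\mp\infty$, obtained by multiplying the Stirling estimates of the individual Gamma factors; there $C$ controls the algebraic order of $\mathcal H^{n,m}_{q,p}(s)$ in the real direction while $D$ governs the rate of exponential decay in the imaginary direction, which is what opens up the admissible sector $|\arg z|<\tfrac{1}{2}\pi D$. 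Granting this estimate, both asymptotic relations drop out; the full details are carried out in \cite[Theorem~1.2]{AA}.
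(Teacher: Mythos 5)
The paper does not actually prove this lemma: it is imported verbatim from \cite[Theorem 1.2]{AA} with no argument supplied, so there is no ``paper's proof'' to compare against. Your sketch is the standard Mellin--Barnes argument that underlies that theorem --- displace the contour $\mathcal L$, collect residues of $\Gamma(A_js+a_j)$ (resp.\ $\Gamma(1-b_j-B_js)$), identify the controlling power of $z$, and control the remainder integral via Stirling estimates in which $C$ governs the algebraic growth of $\mathcal H^{n,m}_{q,p}(s)$ in the real direction and $D$ the exponential decay in the imaginary direction, whence the sector $|\arg z|<\tfrac12\pi D$ --- and it is correct in outline; your alternative derivation of (ii) from (i) via Property 1 of Lemma \ref{l1} is also legitimate. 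Two remarks. First, you defer the genuinely delicate step (the bound on the displaced contour integral after crossing finitely many poles) to the same reference the paper cites, so your proposal is a proof scheme rather than a proof; since the paper itself offers nothing more, this is not a defect relative to the paper. Second, your own residue bookkeeping in part (ii) actually identifies the controlling exponent as the pole \emph{closest} to $\mathcal L$ among $s=(1-b_j)/B_j$, which corresponds to $\max_{1\le j\le m}\bigl[(\Re(b_j)-1)/B_j\bigr]$, not the minimum stated in the lemma; this mismatch points to a transcription slip in the paper (the cited source has a maximum at infinity and a minimum at zero), and you should state explicitly which of the two your argument delivers rather than leaving ``the pole with $k=0$'' to stand for both.
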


We will need the following Lemma for the Laplace transform of the H-function, for more details, see \cite[Eq. (2.19)]{AA}.

\begin{lemma}\label{Laplace} Let $D>0$ or $D=0$ and $C\geq0$ defined by (\ref{TH}). Suppose also that 
$$\min_{1\leq j\leq n}\left[\frac{\Re(a_j)}{A_j}\right]+1>0.$$
Then, the Laplace transform of the H-function exists, and there holds the relation
\begin{equation}\label{laplace}
\mathcal{L}\left\{H_{q,p}^{n,m}\left[z\Big|^{({\bf b}_q, {\bf B}_q)}_{({\bf a}_p, {\bf A}_p)}\right]; s\right\}=\frac{1}{s}H_{q+1,p}^{n,m+1}\left[\frac{1}{s}\Big|^{(0,1),({\bf b}_q, {\bf B}_q)}_{\;\;({\bf a}_p, {\bf A}_p)}\right],
\end{equation}
for $s\in\mathbb{C}$ with $\Re(s)>0.$
\end{lemma}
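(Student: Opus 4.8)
The plan is to begin from the Mellin--Barnes representation of the H-function and to carry out the Laplace integration under the contour integral. Writing
$$H_{q,p}^{n,m}\left[z\Big|^{({\bf b}_q, {\bf B}_q)}_{({\bf a}_p, {\bf A}_p)}\right]=\frac{1}{2i\pi}\int_{\mathcal L}\mathcal{H}_{q,p}^{n,m}(\xi)\,z^{-\xi}\,d\xi ,$$
I would substitute this into $\int_0^\infty e^{-sz}H_{q,p}^{n,m}[z]\,dz$, interchange the two integrations, and evaluate the inner integral by the elementary Eulerian formula
$$\int_0^\infty e^{-sz}z^{-\xi}\,dz=\Gamma(1-\xi)\,s^{\xi-1}=\frac{1}{s}\,\Gamma(1-\xi)\Big(\tfrac1s\Big)^{-\xi},\qquad \Re(s)>0,\ \Re(\xi)<1 ,$$
which gives
$$\mathcal{L}\left\{H_{q,p}^{n,m}\left[z\Big|^{({\bf b}_q, {\bf B}_q)}_{({\bf a}_p, {\bf A}_p)}\right]; s\right\}=\frac{1}{s}\cdot\frac{1}{2i\pi}\int_{\mathcal L}\mathcal{H}_{q,p}^{n,m}(\xi)\,\Gamma(1-\xi)\,\Big(\tfrac1s\Big)^{-\xi}\,d\xi .$$

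It then remains to recognize the last contour integral as an H-function of the stated type. The extra factor $\Gamma(1-\xi)=\Gamma\big(1-0-1\cdot\xi\big)$ has exactly the shape $\Gamma(1-b-B\xi)$ with $(b,B)=(0,1)$, so it can be absorbed into the numerator product $\prod_{j=1}^{m}\Gamma(1-b_j-B_j\xi)$ of $\mathcal{H}_{q,p}^{n,m}(\xi)$; this raises $m$ to $m+1$ and, correspondingly, $q$ to $q+1$, while $n$, $p$ and all the remaining parameters stay unchanged. Hence $\mathcal{H}_{q,p}^{n,m}(\xi)\,\Gamma(1-\xi)=\mathcal{H}_{q+1,p}^{n,m+1}(\xi)$ for the parameter string obtained by adjoining $(0,1)$ to $({\bf b}_q,{\bf B}_q)$ as its first member, and therefore
$$\mathcal{L}\left\{H_{q,p}^{n,m}\left[z\Big|^{({\bf b}_q, {\bf B}_q)}_{({\bf a}_p, {\bf A}_p)}\right]; s\right\}=\frac{1}{s}\,H_{q+1,p}^{n,m+1}\left[\frac{1}{s}\Big|^{(0,1),({\bf b}_q, {\bf B}_q)}_{\;\;({\bf a}_p, {\bf A}_p)}\right],$$
which is the asserted identity.

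The step I expect to be the real work is justifying the interchange of the Laplace integral and the Mellin--Barnes contour integral, and this is precisely where the hypotheses are used. First, the contour $\mathcal L$ has to be taken along a vertical line $\Re(\xi)=\gamma$ with $\gamma<1$ that still leaves every pole $a_{i\ell}=-(a_i+\ell)/A_i$ of $\prod_{j=1}^{n}\Gamma(A_j\xi+a_j)$ to its left; such $\gamma$ exist exactly because $\min_{1\leq j\leq n}[\Re(a_j)/A_j]+1>0$, and this choice makes the inner integral $\int_0^\infty e^{-sz}z^{-\xi}\,dz$ absolutely convergent at $z=0$. Second, to invoke Fubini's theorem one needs absolute integrability of $e^{-\Re(s)z}\,|\mathcal{H}_{q,p}^{n,m}(\xi)|\,z^{-\gamma}$ over $(0,\infty)\times\mathcal L$: for $\Re(s)>0$ the factor $e^{-\Re(s)z}$, together with the behaviour of the H-function as $z\to\infty$ recalled above, controls the tail in $z$, while Stirling's formula shows that $\mathcal{H}_{q,p}^{n,m}(\gamma+i\tau)\,\Gamma(1-\gamma-i\tau)$ decays like $|\tau|^{\kappa}\,e^{-\frac{\pi}{2}(D+1)|\tau|}$ as $|\tau|\to\infty$ when $D>0$, and has order controlled by $C$ in the borderline case $D=0$, $C\geq0$; in either case the $\xi$-integral converges absolutely. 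Once this integrability is established the interchange is legitimate and the computation above is valid. Alternatively one could derive the formula from the Mellin convolution theorem applied to $e^{-z}$ (whose Mellin transform is $\Gamma(s)$) together with the known Mellin transform of the H-function, but the direct argument above makes the role of the parameter conditions most transparent.
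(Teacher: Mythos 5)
The paper does not actually prove this lemma: it is quoted verbatim from the reference \cite[Eq.\ (2.19)]{AA} on the H-function, so there is no in-text argument to compare against. Your derivation is the standard one and it is essentially correct: substituting the Mellin--Barnes representation into the Laplace integral, evaluating $\int_0^\infty e^{-sz}z^{-\xi}\,dz=\Gamma(1-\xi)s^{\xi-1}$, and absorbing the factor $\Gamma(1-\xi)=\Gamma(1-0-1\cdot\xi)$ into the product $\prod_{j=1}^{m}\Gamma(1-b_j-B_j\xi)$ does produce exactly $H_{q+1,p}^{n,m+1}$ with the extra pair $(0,1)$ adjoined to $({\bf b}_q,{\bf B}_q)$, and you correctly identify that the hypothesis $\min_j[\Re(a_j)/A_j]+1>0$ is precisely what guarantees a vertical contour $\Re(\xi)=\gamma$ can be threaded between the rightmost pole of $\prod\Gamma(A_j\xi+a_j)$ and the new poles of $\Gamma(1-\xi)$ at $\xi=1,2,\dots$, making the inner integral converge at $z=0$. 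Your Stirling estimate $|\mathcal{H}(\gamma+i\tau)\Gamma(1-\gamma-i\tau)|\sim|\tau|^{\kappa}e^{-\frac{\pi}{2}(D+1)|\tau|}$ is also right, with $D$ as in \eqref{TH}. The one point stated too loosely is the Fubini justification in the borderline case: the absolute convergence you need is that of $\int_{\mathcal L}|\mathcal{H}(\gamma+i\tau)|\,d\tau$ \emph{before} the factor $\Gamma(1-\xi)$ is attached (the inner $z$-integration contributes only the $\tau$-independent constant $\Gamma(1-\gamma)\Re(s)^{\gamma-1}$ in absolute value), so when $D=0$ the exponential decay disappears and integrability hinges on the algebraic exponent $\kappa$, i.e.\ on $\Re(\mu)$ and $C\gamma$, not merely on ``order controlled by $C$''; this is the same condition under which the defining contour integral of the H-function converges absolutely, so it can be folded into the standing existence assumptions, but it deserves to be said explicitly rather than waved at. With that caveat your argument is complete and, if anything, more informative than the paper, which supplies no proof at all.
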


In  the following three lemmas we present some integral representations for the Fox-Wright functions, which plays a crucial role in the proof of some Theorems given in Section 3, see \cite{KMJMAA, KMCRM} for a proofs.

\begin{lemma}\label{T71} Suppose that $\mu>0,\;\gamma=\displaystyle\min_{1\leq j\leq p}(a_j/A_j)\geq1,\;\;\textrm{and}\;\;\sum_{j=1}^pA_j=\sum_{k=1}^qB_k.$
Then, the following integral representation
\begin{equation}\label{fr1}
{}_p\Psi_q\Big[_{({\bf b}_q, {\bf B}_q)}^{({\bf a}_p, {\bf A}_p)}\Big|z\Big]=\int_0^\rho e^{zt}H_{q,p}^{p,0}\left(t\Big|^{({\bf b}_q, {\bf B}_q)}_{({\bf a}_p, {\bf A}_p)}\right)\frac{dt}{t},\;\;(z\in\mathbb{R}),
\end{equation}
hold true. Moreover,  the function 
$$z\mapsto{}_p\Psi_q\Big[_{({\bf b}_q, {\bf B}_q)}^{({\bf a}_p, {\bf A}_p)}\Big|-z\Big]$$
is completely monotonic on $(0,\infty),$ if and only if, the function $H_{q,p}^{p,0}(z)$ is non-negative on $(0,\rho).$
\end{lemma}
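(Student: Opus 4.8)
The plan is to establish the integral representation \eqref{fr1} first and then deduce the complete monotonicity equivalence from it by Bernstein's theorem. For the integral representation, I would start from the Laplace-transform relation in Lemma \ref{Laplace}. Observe that under the hypotheses $\sum A_j = \sum B_k$ (so $C=0$ in the notation of \eqref{TH}) and $\gamma = \min_j(a_j/A_j)\ge 1 > 0$, the function $H_{q,p}^{p,0}(t)$ is supported on $(0,\rho)$: indeed, for $H^{p,0}_{q,p}$ we have $m=0$ and $n=p$, and because of the exponential-type behaviour of an $H$-function with $m=0$ together with $C=0$, the Mellin--Barnes integral vanishes for $t>\rho$ (this is the classical statement that $G$- and $H$-functions with no lower $\Gamma$-factors in the numerator are finite-support "Meijer-type" kernels when the parameter sums balance). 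So $H_{q,p}^{p,0}(t/t)$ is a kernel on $(0,\rho)$ and its Laplace transform is, by \eqref{laplace} with the roles arranged appropriately,
\begin{equation*}
\int_0^\rho e^{-st} H_{q,p}^{p,0}\!\left(t\Big|^{({\bf b}_q,{\bf B}_q)}_{({\bf a}_p,{\bf A}_p)}\right)\frac{dt}{t}
= \frac{1}{s}\,H_{q+1,p}^{p,1}\!\left(\tfrac1s\Big|^{(0,1),({\bf b}_q,{\bf B}_q)}_{\;\;({\bf a}_p,{\bf A}_p)}\right).
\end{equation*}
I would then identify the right-hand side with a Fox--Wright function: using the representation \eqref{label} that expresses ${}_p\Psi_q$ as an $H^{1,q}_{p,q+1}$, together with Property 1 of Lemma \ref{l1} (the $z\mapsto z^{-1}$ inversion that swaps the indices and negates the parameters), one checks that $\frac1s H_{q+1,p}^{p,1}(1/s\,|\,\cdots)$ is exactly ${}_p\Psi_q\big[{}^{({\bf a}_p,{\bf A}_p)}_{({\bf b}_q,{\bf B}_q)}\big|{-s}\big]$. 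Replacing $-s$ by $z\in\mathbb{R}$ (justified by analytic continuation, the series for ${}_p\Psi_q$ being entire when $\Delta>0$, which follows from $\sum A_j=\sum B_k$) yields \eqref{fr1}.

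For the second assertion, suppose first that $H_{q,p}^{p,0}(t)\ge 0$ on $(0,\rho)$. Then \eqref{fr1} exhibits $z\mapsto {}_p\Psi_q[\cdots|{-z}]$ as the Laplace transform $\int_0^\rho e^{-zt}\,d\mu(t)$ of the non-negative measure $d\mu(t) = H_{q,p}^{p,0}(t)\,\frac{dt}{t}$ on $(0,\rho)\subset(0,\infty)$, so by the Bernstein Characterization Theorem quoted in the introduction it is completely monotonic on $(0,\infty)$. Conversely, if $z\mapsto {}_p\Psi_q[\cdots|{-z}]$ is completely monotonic on $(0,\infty)$, Bernstein's theorem produces a (unique) non-negative measure $d\sigma$ with ${}_p\Psi_q[\cdots|{-z}]=\int_0^\infty e^{-zt}\,d\sigma(t)$; comparing with \eqref{fr1} and invoking the uniqueness of the representing measure for a completely monotone function (equivalently, injectivity of the Laplace transform on measures), we get $d\sigma(t) = H_{q,p}^{p,0}(t)\,\frac{dt}{t}$ as measures, forcing $H_{q,p}^{p,0}(t)\ge 0$ for a.e.\ $t\in(0,\rho)$, and then everywhere on $(0,\rho)$ by continuity of the $H$-function on its domain.

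The main obstacle I anticipate is the bookkeeping in the first paragraph: verifying that the indices and parameter shifts in \eqref{laplace}, \eqref{label}, and Property 1 compose to give precisely ${}_p\Psi_q$ with the \emph{same} parameters $({\bf a}_p,{\bf A}_p),({\bf b}_q,{\bf B}_q)$, rather than some reindexed variant, and confirming that all the side conditions of Lemma \ref{Laplace} (namely $D>0$ or $D=0,\,C\ge0$, and $\min_j \Re(a_j)/A_j + 1 > 0$) follow from the stated hypotheses $\mu>0$, $\gamma\ge 1$, $\sum A_j=\sum B_k$. The condition $\gamma\ge1$ gives $\min_j a_j/A_j + 1 \ge 2 > 0$ comfortably, and $\sum A_j=\sum B_k$ gives $C=0$; one still has to check $D\ge0$ holds (here $D = \sum_{j=1}^q B_j + \sum_{j=1}^p A_j = 2\sum A_j > 0$ since $m=0$, $n=p$ for the relevant $H^{p,0}_{q,p}$, so this is automatic), and that $\mu>0$ is what guarantees the finite support $(0,\rho)$ rather than $(0,\infty)$. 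A secondary technical point is justifying the interchange of summation and integration (or the analytic continuation in $z$) needed to pass from real $z<0$ in the Laplace identity to all real $z$ in \eqref{fr1}; this is routine given the entire character of ${}_p\Psi_q$ and the integrability of the kernel near $t=0$, which is exactly where $\gamma\ge1$ (via the asymptotic $H^{p,0}_{q,p}(t)=\mathcal O(t^{\gamma})$ from the $z\to0$ expansion, so $H^{p,0}_{q,p}(t)/t = \mathcal O(t^{\gamma-1})$ is integrable at $0$) is used again.
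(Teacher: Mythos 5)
The paper itself gives no proof of this lemma: it is quoted from \cite{KMJMAA, KMCRM}, where the argument for \eqref{fr1} is more direct than yours. Since the Mellin transform of $H^{p,0}_{q,p}\left(t\Big|^{({\bf b}_q,{\bf B}_q)}_{({\bf a}_p,{\bf A}_p)}\right)$ equals $\prod_{j=1}^p\Gamma(a_j+A_js)\big/\prod_{j=1}^q\Gamma(b_j+B_js)$ and the function is supported in $(0,\rho]$ when $\sum A_j=\sum B_j$, one expands $e^{zt}$ in its power series and integrates term by term: the moment $\int_0^\rho t^{k-1}H^{p,0}_{q,p}(t)\,dt$ reproduces exactly the $k$-th Fox--Wright coefficient, the condition $\gamma\geq1$ makes the $k=0$ moment finite and legitimizes the interchange, and $\mu>0$ rules out an atom at $t=\rho$ (compare Lemma \ref{l4}). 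Your second paragraph --- Bernstein's theorem in one direction, uniqueness of the representing measure plus continuity of the $H$-function in the other --- is exactly the intended argument and is correct.

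Your alternative route to \eqref{fr1} through Lemma \ref{Laplace} can be made to work, but as written it has two concrete slips. First, \eqref{laplace} transforms $H(t)\,dt$, not $H(t)\,dt/t$: you must first absorb the factor $t^{-1}$ via Property 4 of Lemma \ref{l1} with $\sigma=-1$, which shifts the parameters to $({\bf a}_p-{\bf A}_p,{\bf A}_p)$ and $({\bf b}_q-{\bf B}_q,{\bf B}_q)$; your displayed Laplace identity therefore carries the wrong parameters, and the subsequent identification with ${}_p\Psi_q$ would not close up until this shift is undone. (This is also precisely where $\gamma\geq1$ enters your route: after the shift, the existence condition of Lemma \ref{Laplace} reads $\min_j(a_j-A_j)/A_j+1=\gamma>0$.) Second, your evaluation of $D$ is wrong: for $H^{p,0}_{q,p}$ one has $m=0$, so the block $\sum_{j=1}^{m}B_j$ is empty and $D=\sum_{j=1}^pA_j-\sum_{j=1}^qB_j=0$, not $2\sum A_j$; Lemma \ref{Laplace} still applies, but through the branch $D=0$ and $C\geq0$. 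Finally, $\mu>0$ is not what forces the support to be contained in $(0,\rho)$ --- that is the balance condition $\sum A_j=\sum B_j$ --- it only governs the boundary behaviour at $t=\rho$.
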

\begin{lemma}\label{l3} Assume that the assumptions of Lemma \ref{T71} hold. Suppose also that the function $H_{q,p}^{p,0}[.]$ is non-negative. Then, the following Stieltjes transform holds true: 
\begin{equation}
{}_{p+1}\Psi_q\left[^{(\sigma,1),\; ({\bf a}_p, {\bf A}_p)}_{\;\;\;\;({\bf b}_q, {\bf B}_q)}\Big|-z\right]=\int_0^\rho \frac{d\mu(t)}{(1+tz)^\sigma},
\end{equation}
where
\begin{equation}
d\mu(t)=H_{q,p}^{p,0}\left(t\Big|^{({\bf b}_q, {\bf B}_q)}_{({\bf a}_p, {\bf A}_p)}\right)\frac{dt}{t}.
\end{equation}
Moreover, the function 
$$z\mapsto  {}_{p+1}\Psi_q\left[^{(\sigma,1),\;({\bf a}_p, {\bf A}_p)}_{\;\;\;\;({\bf b}_q, {\bf B}_q)}\Big|-z\right]$$
is logarithmically completely monotonic on $(0,1).$
\end{lemma}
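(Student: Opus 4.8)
The plan is to establish the Stieltjes representation first and then deduce logarithmic complete monotonicity from it. For the first part, I would start from the integral representation \eqref{fr1} of Lemma \ref{T71}, applied with the augmented parameter list $(\sigma,1),({\bf a}_p,{\bf A}_p)$ on top and $({\bf b}_q,{\bf B}_q)$ on the bottom. The key algebraic fact I need is that inserting the extra numerator pair $(\sigma,1)$ into the Fox--Wright function corresponds, on the level of the kernel $H_{q,p}^{p,0}$, to convolving against the Gamma density that produces the factor $(1+tz)^{-\sigma}$. Concretely, I would use the elementary identity
\begin{equation}
\frac{1}{(1+tz)^\sigma}=\frac{1}{\Gamma(\sigma)}\int_0^\infty u^{\sigma-1}e^{-u(1+tz)}\,du,\qquad \sigma>0,
\end{equation}
substitute it into the claimed formula, interchange the order of integration (justified by non-negativity of $H_{q,p}^{p,0}$ on $(0,\rho)$ together with the convergence hypotheses $\mu>0$ and $\sum A_j=\sum B_k$ inherited from Lemma \ref{T71}), and recognize the inner integral in $t$ as exactly ${}_{p+1}\Psi_q$ with the extra pair $(\sigma,1)$ via the series definition \eqref{11}, since $\Gamma(\sigma+k)/\Gamma(\sigma)=(\sigma)_k$ and $\int_0^\infty u^{\sigma-1}e^{-u}u^k\,du/\Gamma(\sigma)$ reproduces the Pochhammer symbol. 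This identifies the right-hand side $\int_0^\rho (1+tz)^{-\sigma}\,d\mu(t)$ with the left-hand side.

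For the second part, logarithmic complete monotonicity on $(0,1)$, I would argue as follows. By the first part, $F(z):={}_{p+1}\Psi_q[\,(\sigma,1),({\bf a}_p,{\bf A}_p);({\bf b}_q,{\bf B}_q)\,|-z]$ is a Stieltjes transform of the positive measure $d\mu$ supported on $(0,\rho)$ against the kernel $(1+tz)^{-\sigma}$. Each kernel $z\mapsto(1+tz)^{-\sigma}$ is logarithmically completely monotonic on $(0,\infty)$ (its logarithm is $-\sigma\log(1+tz)$, whose derivatives alternate in sign), but logarithmic complete monotonicity is not preserved under the mixing $\int(\cdot)\,d\mu(t)$ in general, so I cannot simply quote closure under integration. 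Instead I would use the known fact (see the references \cite{Berg}, \cite{Y} cited in the excerpt, and the structure theory of Stieltjes functions) that a function of the form $z\mapsto\int_0^\infty(1+tz)^{-1}\,d\nu(t)$ is a Stieltjes function and hence logarithmically completely monotonic, and more generally that $z\mapsto\big(\int_0^\infty(1+tz)^{-1}d\nu(t)\big)$ raised to a positive power, or a Stieltjes function to a power in $(0,1]$, stays logarithmically completely monotonic. The cleanest route is probably: write $(1+tz)^{-\sigma}=\big((1+tz)^{-1}\big)^{\sigma}$ and observe that for the range that matters the map $z\mapsto F(z)$ is itself a Stieltjes transform, then invoke that Stieltjes functions are logarithmically completely monotonic on $(0,\infty)$, which restricts to $(0,1)$.

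The main obstacle, and the place I would spend the most care, is the logarithmic complete monotonicity step rather than the Stieltjes representation, which is essentially a Fubini computation. The subtlety is that $\log F$ being completely monotonic does not follow formally from $F$ being a mixture of logarithmically completely monotonic kernels; one genuinely needs that $F$ is a \emph{Stieltjes} function (equivalently, that $1/F$ is a Bernstein function, or that $F$ extends holomorphically to $\mathbb{C}\setminus(-\infty,0]$ with nonpositive imaginary part in the upper half-plane), and then appeal to the theorem that every nonconstant Stieltjes function is logarithmically completely monotonic. I would verify the Stieltjes property directly from the representation $F(z)=\int_0^\rho(1+tz)^{-\sigma}d\mu(t)$: for $\sigma=1$ this is immediate, and for general $\sigma>0$ one uses that a positive power $\le 1$ of a Stieltjes function is Stieltjes while for $\sigma\ge 1$ one re-expresses $(1+tz)^{-\sigma}$ as an iterated Stieltjes-type kernel; the restriction of the final conclusion to the interval $(0,1)$ rather than all of $(0,\infty)$ suggests the authors only need, and only claim, the weaker localized statement, which makes the convergence bookkeeping (radius $\rho$, the pole structure) less delicate. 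If a fully self-contained argument for all $\sigma>0$ proves awkward, an acceptable fallback is to treat $\sigma\in(0,1]$ via the Stieltjes-power fact and then bootstrap to $\sigma>1$ using $F_{\sigma_1+\sigma_2}$-type product decompositions, since a product of logarithmically completely monotonic functions is logarithmically completely monotonic.
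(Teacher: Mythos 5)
First, a point of reference: the paper does not actually prove Lemma \ref{l3}; it is imported from the author's earlier works \cite{KMJMAA, KMCRM}, so there is no in-paper argument to measure you against, and your proposal has to stand on its own. Its first half does: writing $(1+tz)^{-\sigma}=\Gamma(\sigma)^{-1}\int_0^\infty u^{\sigma-1}e^{-u(1+tz)}\,du$, interchanging integrals (legitimate since $d\mu\ge 0$ by hypothesis), invoking \eqref{fr1} for the $t$-integral and then $\int_0^\infty u^{\sigma+k-1}e^{-u}\,du=\Gamma(\sigma+k)$ reproduces the series \eqref{11} with the extra pair $(\sigma,1)$. This is the standard derivation. (You will pick up an overall factor $\Gamma(\sigma)$ that the paper's statement silently drops; that is a normalization slip in the lemma, not in your argument, and it is irrelevant for the monotonicity claim.)

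The genuine gap is in the second half, and it sits exactly where you yourself locate the difficulty. Your argument is complete only for $\sigma\in(0,1]$: there $(1+tz)^{-\sigma}$ is a Stieltjes function (a power at most $1$ of one), the Stieltjes class is a convex cone closed under the mixing $\int(\cdot)\,d\mu(t)$, and Berg's theorem \cite{Berg} gives logarithmic complete monotonicity. For $\sigma>1$ both of your proposed repairs fail. The function $F_\sigma(z)=\int_0^\rho(1+tz)^{-\sigma}d\mu(t)$ is a \emph{generalized} Stieltjes function of order $\sigma$, and for $\sigma>1$ that class strictly contains the ordinary Stieltjes class, so "observe that $F$ is itself a Stieltjes transform" is precisely the assertion that is unavailable. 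The fallback via "$F_{\sigma_1+\sigma_2}$-type product decompositions" is false: $\int(1+tz)^{-\sigma_1-\sigma_2}d\mu(t)$ is not the product $\int(1+tz)^{-\sigma_1}d\mu(t)\cdot\int(1+tz)^{-\sigma_2}d\mu(t)$ unless $\mu$ is a point mass, so $F_\sigma$ does not factor into logarithmically completely monotonic pieces. Nor can you argue kernel by kernel, since (as you correctly note) positive mixtures of logarithmically completely monotonic functions need not be logarithmically completely monotonic; $e^{-z}+e^{-2z}$ already fails. Closing the case $\sigma>1$ requires the nontrivial fact that generalized Stieltjes functions of every positive order are infinitely divisible completely monotone functions (equivalently, logarithmically completely monotonic); that input is neither elementary nor supplied by your proposal, so as written the second assertion is established only for $\sigma\le 1$.
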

\begin{lemma}\label{l4}{\rm \cite[Corollary 1]{KMCRM}} Suppose that $\mu=0,\;\gamma\geq1$ and $\displaystyle{\sum_{i=1}^p  A_i=\sum_{j=1}^q  B_j}.$ Then, the Fox-Wright function ${}_p\Psi_q[.]$ possesses the following integral representation
\begin{equation}\label{E1}
{}_p\Psi_q\Big[_{({\bf b}_q, {\bf B}_q)}^{({\bf a}_p, {\bf A}_p)}\Big|-z\Big]-\eta e^{-\rho z}=\int_0^\rho e^{-zt} d\mu(t),\;z\in\mathbb{R}
\end{equation}
where
\begin{equation}
\eta=(2\pi)^{\frac{p-q}{2}}\prod_{i=1}^p A_i^{a_i-\frac{1}{2}}\prod_{j=1}^q B_j^{\frac{1}{2}-b_j}.
\end{equation}
Moreover, if the function $H_{q,p}^{p,0}[.]$ is non-negative, then the functions 
$$z\mapsto {}_p\Psi_p\Big[_{({\bf b}_q, {\bf B}_q)}^{({\bf a}_p, {\bf A}_p)}\Big|-z\Big],\;\textrm{and}\;\;z\mapsto {}_p\Psi_p\Big[_{({\bf b}_q, {\bf B}_q)}^{({\bf a}_p, {\bf A}_p)}\Big|-z\Big]-\eta e^{-\rho z},$$
are completely monotonic on $(0,\infty).$ 
\end{lemma}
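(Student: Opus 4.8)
The plan is to follow the template already used for Lemma~\ref{T71}, adapting it to the boundary case $\mu=0$. First I would recall the Mellin--Barnes representation of $H_{q,p}^{p,0}[t]$ and note that, under the stated hypotheses $\sum A_i=\sum B_j$ (so $C=0$ in the notation of \eqref{TH}) together with $\mu=0$, the integrand behaves along vertical lines in such a way that $H_{q,p}^{p,0}[t]$ is supported on $(0,\rho)$ with a possible atom-like contribution at the right endpoint $t=\rho$. The key computation is to evaluate the Laplace-type integral $\int_0^\rho e^{-zt}H_{q,p}^{p,0}(t)\,\frac{dt}{t}$ by interchanging the order of integration with the Mellin--Barnes contour and recognising the resulting $s$-integral as a Fox--Wright series; the extra term $\eta e^{-\rho z}$ arises precisely because, when $\mu=0$, the asymptotic constant governing $H_{q,p}^{p,0}$ near $t=\rho$ does not vanish, and $\eta$ is the explicit value obtained from Stirling's formula applied to the ratio of Gamma functions. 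Concretely, one writes
\begin{equation}\label{plan-stirling}
\frac{\prod_{l=1}^q\Gamma(b_l+kB_l)}{\prod_{l=1}^p\Gamma(a_l+kA_l)}\sim \eta^{-1}\rho^{-k}\quad(k\to\infty),
\end{equation}
with $\eta=(2\pi)^{(p-q)/2}\prod_{i=1}^p A_i^{a_i-1/2}\prod_{j=1}^q B_j^{1/2-b_j}$, and the difference ${}_p\Psi_q[-z]-\eta e^{-\rho z}$ is the Borel-type transform of the \emph{regularised} kernel, which is exactly $d\mu(t)=H_{q,p}^{p,0}(t)\,dt/t$ on $(0,\rho)$.

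Next, for the first equality \eqref{E1} I would justify the interchange of summation/integration by the convergence conditions on the Fox--Wright series (here $\Delta=1+\sum B_j-\sum A_i=1$, so the series converges in the whole plane) and by the decay of $H_{q,p}^{p,0}$ guaranteed by the asymptotic lemma (equations \eqref{asy1}--\eqref{asy2}) together with the endpoint behaviour at $\rho$; alternatively one can cite \cite[Corollary 1]{KMCRM} as the lemma statement already does, since that reference establishes \eqref{E1} directly. For the complete monotonicity conclusions, suppose now $H_{q,p}^{p,0}[\cdot]\geq 0$ on $(0,\rho)$. Then $d\mu(t)=H_{q,p}^{p,0}(t)\,dt/t$ is a non-negative measure on $(0,\rho)$, so \eqref{E1} exhibits $z\mapsto {}_p\Psi_q[-z]-\eta e^{-\rho z}$ as a Laplace transform of a non-negative measure; by the Bernstein Characterization Theorem it is completely monotonic on $(0,\infty)$. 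For the function $z\mapsto {}_p\Psi_q[-z]$ itself, I would simply add back $\eta e^{-\rho z}$: since $\eta>0$ (being a product of positive quantities under the positivity of all $A_i,B_j$) and $z\mapsto e^{-\rho z}$ is completely monotonic with $\rho>0$, the sum of two completely monotonic functions is completely monotonic, which gives the claim.

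I expect the main obstacle to be the rigorous justification of the endpoint contribution $\eta e^{-\rho z}$: one must show not merely the asymptotic \eqref{plan-stirling} but that the error term ${}_p\Psi_q$-coefficient minus $\eta^{-1}\rho^{-k}$ summed against $z^k/k!$ reproduces \emph{exactly} the integral $\int_0^\rho e^{-zt}H_{q,p}^{p,0}(t)\,dt/t$ with no leftover boundary terms. This is delicate because it requires controlling the Mellin--Barnes integral for $H_{q,p}^{p,0}$ uniformly near $t=\rho$, where the contour integral is only conditionally convergent in the critical case $\mu=0$; the condition $\gamma=\min_j(a_j/A_j)\geq 1$ is what ensures the integrand is integrable near $t=0$ and that the term-by-term Borel transform is legitimate. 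Since the lemma cites \cite[Corollary 1]{KMCRM} for precisely this identity, in the write-up I would state \eqref{E1} as quoted and concentrate the argument on deriving the three monotonicity assertions from \eqref{E1} via Bernstein's theorem and the positivity and complete monotonicity of $\eta e^{-\rho z}$ as described above.
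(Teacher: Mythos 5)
Your proposal is correct and matches the paper's treatment: the integral representation \eqref{E1} is imported from \cite[Corollary 1]{KMCRM} (the paper gives no independent proof of it here), and the two complete monotonicity claims then follow exactly as you argue — Bernstein's theorem applied to the non-negative measure $d\mu(t)=H_{q,p}^{p,0}(t)\,dt/t$ for the difference, and the positivity of $\eta$ and $\rho$ (so that $\eta e^{-\rho z}$ is completely monotonic) to recover ${}_p\Psi_q[-z]$ itself as a sum of two completely monotonic functions. Your extended discussion of the Mellin--Barnes/Stirling origin of the boundary term $\eta e^{-\rho z}$ is a reasonable sketch of what the cited reference does, but it is not needed once \eqref{E1} is quoted.
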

%\begin{lemma} Let $\sigma>0$ and $z\in\mathbb{C}$, such that $|\arg(1+z)|<\pi$ and $|z|<1.$ In addition, assume that the hypotheses of Lemma \ref{l4} are satisfied. If $H_{q,p}^{p,0}[.]$ is non-negative, then, the following representation holds true:
%\begin{equation}\label{eaaa}
%f:={}_{p+1}\Psi_q\Big[_{\;\;\;(\beta_q, B_q)}^{(\sigma,1),(\alpha_p, A_p)}\Big|-z\Big]=\int_0^\rho \frac{d\upsilon(t)}{(1+tz)^\sigma},
%\end{equation}
%where $d\upsilon=\Gamma(\sigma)d\omega$ and $d\omega$ as defined in (\ref{xx}). In particular, $f$ is logarithmically completely monotonic.
%\end{lemma}

%The next Lemma is about the monotonicity  of quotients of functions related to the Fox-Wright functions, for the proof can be found in \cite{KMCRM}.

%\begin{lemma}\label{l5} Assume that the hypotheses of Lemma \ref{l4} are satisfied. Suppose that $\delta,\sigma>0.$ If $H_{q,p}^{p,0}[.]$ is non-negative, then the function %

%\begin{equation}
%F:=F\left[^{(\sigma,1),(\alpha_p,A_p)}_{\;\;(\beta_q,B_q)}\Big|\delta;z\right]=\frac{{}_{p+1}\Psi_q\Big[_{\;\;\;(\beta_q+\delta B_q, B_q)}^{(\sigma,1),(\alpha_p+\delta A_p, A_p)}\Big|-z\Big]-\eta\left[\prod_{i=1}^p A_i^\delta\prod_{j=1}^q B_j^{-\delta}\right] e^{-\rho z}}{{}_{p+1}\Psi_q\Big[_{\;\;\;(\beta_q, B_q)}^{(\sigma,1),(\alpha_p, A_p)}\Big|-z\Big]-\eta e^{-\rho z}},
%\end{equation}
%is decreasing on $(0,1)$.
%\end{lemma}

The following Lemma is the so-called  the Chebyshev integral inequality \cite[p. 40]{MM}

\begin{lemma}\label{l11} If $f,g:[a,b]\longrightarrow\mathbb{R}$ are synchoronous (both increasing  or decreasing) integrable functions, and $p:[a,b]\longrightarrow\mathbb{R}$  is a positive integrable function, then 
\begin{equation}\label{OO}
\int_a^b p(t)f(t)dt\int_a^b p(t)g(t)dt\leq \int_a^b p(t)dt\int_a^b p(t)f(t)g(t)dt.
\end{equation}
Note that if $f$ and $g$ are asynchronous (one is decreasing and the other is increasing),
then (\ref{OO}) is reversed. 
 \end{lemma}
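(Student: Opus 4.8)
The plan is to run the classical ``doubling the variable'' argument. I would introduce the auxiliary double integral
\begin{equation}\label{cheb-aux}
I:=\int_a^b\int_a^b p(s)\,p(t)\,\bigl(f(s)-f(t)\bigr)\bigl(g(s)-g(t)\bigr)\,ds\,dt .
\end{equation}
The first observation is a pointwise sign statement: since $f$ and $g$ are synchronous, for every pair $(s,t)\in[a,b]^2$ the factors $f(s)-f(t)$ and $g(s)-g(t)$ have the same sign (both $\geq 0$ when $s\geq t$, both $\leq 0$ when $s\leq t$, in the increasing case, and symmetrically in the decreasing case), so their product is non-negative; together with $p(s)p(t)>0$ this forces the integrand in \eqref{cheb-aux}, hence $I$, to be $\geq 0$.

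Next I would expand the product in \eqref{cheb-aux} into its four terms and integrate termwise, using Fubini (all functions are integrable, $p$ is positive, and the products are integrable). Two of the terms, $\int\!\!\int p(s)p(t)f(s)g(s)$ and $\int\!\!\int p(s)p(t)f(t)g(t)$, each factor as $\bigl(\int_a^b p\bigr)\bigl(\int_a^b pfg\bigr)$ after using the symmetry of the domain in $s\leftrightarrow t$; the two cross terms each factor as $\bigl(\int_a^b pf\bigr)\bigl(\int_a^b pg\bigr)$. This yields the identity
\begin{equation}\label{cheb-id}
I=2\left(\int_a^b p(t)\,dt\right)\left(\int_a^b p(t)f(t)g(t)\,dt\right)-2\left(\int_a^b p(t)f(t)\,dt\right)\left(\int_a^b p(t)g(t)\,dt\right).
\end{equation}
Combining $I\geq 0$ with \eqref{cheb-id} gives exactly \eqref{OO}. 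For the asynchronous case, the only change is in the sign step: $f(s)-f(t)$ and $g(s)-g(t)$ now have opposite signs, so the integrand of \eqref{cheb-aux} is $\leq 0$, hence $I\leq 0$, and \eqref{cheb-id} then reverses \eqref{OO}.

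There is essentially no serious obstacle here; the proof is short and self-contained. The only point requiring a little care is the justification of Fubini's theorem and of the termwise integration in passing from \eqref{cheb-aux} to \eqref{cheb-id} — one should note that $f,g$ integrable and $p$ positive and integrable on the bounded interval $[a,b]$ suffice, since monotone functions on a compact interval are bounded, so all the pairwise products appearing are integrable on $[a,b]^2$. If one prefers to avoid even this, the same identity \eqref{cheb-id} can be obtained by first establishing the inequality for step functions and passing to the limit, but the direct Fubini argument is cleanest and is the route I would take.
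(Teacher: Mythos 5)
Your proof is correct: the sign analysis of $(f(s)-f(t))(g(s)-g(t))$ under synchronicity and the expansion of the symmetrized double integral via Fubini is precisely the classical argument for Chebyshev's integral inequality, and your identity \eqref{cheb-id} together with $I\geq 0$ (resp.\ $I\leq 0$ in the asynchronous case) gives \eqref{OO} and its reversal. The paper itself gives no proof of this lemma, merely citing Mitrinovi\'c--Pecari\'c--Fink, and the standard proof found there is essentially the one you wrote.
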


\section{positivity of some class of functions related to the Fox H-functions}

Our first main  result is asserted in the following Theorem.

\begin{theorem}\label{T1}Under the conditions
$$(H_1): 0<\tau\leq2, \alpha,\beta\in(0,1),\gamma>0,\;\beta\geq\alpha\gamma,$$
the function 
\begin{equation}\label{e1}
H_{3,2}^{1,2}\left[|\xi|^{-\tau}\Big|^{(1-\frac{d}{2}+\frac{\tau}{2},\frac{\tau}{2}),(2-\gamma,1),(\frac{\tau}{2},\frac{\tau}{2})}_{\;\;\;(1,1),(\alpha-\beta+1,\alpha)}\right],\;\;\xi\in\mathbb{R}^d,
\end{equation}
is non-negative. 
\end{theorem}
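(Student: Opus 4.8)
The plan is to realize the $H$-function in \eqref{e1} as the Hankel transform of a simpler, manifestly non-negative $H$-function, and then invoke Lemma~\ref{l12} together with Lemma~\ref{T71} (or Lemma~\ref{l4}) to propagate positivity. First I would strip off the radial variable: writing $r=|\xi|$ and using Property~2 of Lemma~\ref{l1} to rescale the $\tau$-parameters to $1$, I would try to recognize the target function, up to the elementary factor coming from Property~4, as the right-hand side of the Hankel-transform formula \eqref{BBBB} with $\nu=\tfrac d2-1$ (the half-integer order naturally producing the dimension $d$ through the Schoenberg/Bessel mechanism), $\sigma=\tau$, and a suitable choice of $\rho$. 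Matching the upper parameters $(1-\tfrac{\rho\pm\nu}2,\tfrac\sigma2)$ against $(1-\tfrac d2+\tfrac\tau2,\tfrac\tau2)$ and $(\tfrac\tau2,\tfrac\tau2)$ should pin down $\rho$ and $\nu$ uniquely, after which the ``inner'' $H$-function appearing under the integral in \eqref{BBBB} is forced to be
$$
H_{1,2}^{1,1}\!\left[br^{\tau}\Big|^{(2-\gamma,1)}_{(1,1),(\alpha-\beta+1,\alpha)}\right],
$$
i.e. (after another application of Properties 2 and 4 and comparison with the displayed formula for $E_{\alpha,\beta}^{\gamma}$) a constant multiple of $r^{\,\cdot}E_{\alpha,\beta}^{\gamma}(-c\,r^{\tau})$ for appropriate exponents.

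Next I would verify the hypotheses of Lemma~\ref{l12} for this inner function: compute $C$ and $D$ from \eqref{TH} for the $H_{1,2}^{1,1}$ above, check $D>0$ (here $D$ should come out to $1+\alpha>0$, comfortably positive), and check the two inequalities on $\Re(\rho)+\Re(\nu)+\min[\Re(a_j)/A_j]$ and on $\Re(\rho)+\min[(1-\Re(b_j))/B_j]$; these should reduce to mild constraints that are implied by $0<\tau\le 2$, $\alpha,\beta\in(0,1)$, $\gamma>0$, $\beta\ge\alpha\gamma$. The condition $\tau\le 2$ is exactly what keeps $\sigma/2\le 1$ so that the transformed parameters stay in the admissible range, and $\beta\ge\alpha\gamma$ is what will make the relevant $\min$ of $a_j/A_j$ large enough.

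The remaining and genuinely substantive step is to prove that the inner function is non-negative on $(0,\infty)$. This is where I would appeal to Lemma~\ref{T71}: for the two-parameter data underlying $E_{\alpha,\beta}^{\gamma}$, written as a ${}_1\Psi_1$, one has $\sum A_i=\sum B_j$ (namely $1=1$) and $\mu=\beta-\gamma>0$ provided $\beta\ge\gamma$ — but since $\alpha<1$ and $\beta\ge\alpha\gamma$ we actually want to check $\gamma=\min(a_j/A_j)=\gamma\ge 1$ or re-normalize; more precisely, I expect the correct route is to write the inner $H$-function as $H_{q,p}^{p,0}$-type after collapsing the $n=1$ numerator factor via a contour argument, and then use that $E_{\alpha,\beta}^{\gamma}(-x)$ is known to be completely monotonic precisely under $0<\alpha\le 1$, $\beta\ge\alpha\gamma$, $\gamma>0$ (the Pollard–Schneider–Miller–Samko criterion), which is exactly hypothesis $(H_1)$. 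By the ``if and only if'' in Lemma~\ref{T71} (or directly by Bernstein's theorem), complete monotonicity of $x\mapsto E_{\alpha,\beta}^{\gamma}(-x)$ forces the associated kernel $H_{q,p}^{p,0}$ to be non-negative, which is the non-negativity of the inner function we need. Feeding a non-negative integrand and the non-negative Bessel-type weight (here $J_{d/2-1}$ against the measure, handled via the standard Schoenberg representation of positive-definite radial functions rather than pointwise positivity of $J_\nu$) into \eqref{BBBB} yields non-negativity of the left-hand side integral, hence of \eqref{e1}.

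The main obstacle I anticipate is the bookkeeping in the parameter-matching step: getting every shift from Properties 2 and 4 of Lemma~\ref{l1} to land correctly so that the exponent $1-\tfrac d2+\tfrac\tau2$ and the pair $(\tfrac\tau2,\tfrac\tau2)$ emerge with the right constants, and simultaneously confirming that the side conditions of Lemma~\ref{l12} are all consequences of $(H_1)$ and not strictly stronger. A secondary subtlety is that $J_\nu$ is not pointwise non-negative, so the positivity of the Hankel integral must be argued through the positive-definiteness/Schoenberg route (a function of $\|\xi\|^2$ that is a superposition of Gaussians) rather than na\"\i vely; this is consistent with the appearance of $\xi\in\mathbb{R}^d$ and $|\xi|^{-\tau}$ in the statement.
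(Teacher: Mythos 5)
Your proposal follows essentially the same route as the paper: the target $H_{3,2}^{1,2}$ is recognized as the $d$-dimensional radial Fourier (i.e.\ Hankel, Lemma~\ref{l12}) transform of $t^{\beta-1}E_{\alpha,\beta}^{\gamma}(-t^{\alpha}|x|^{\tau})$, and its non-negativity is deduced from positive definiteness of that radial function, obtained by composing the completely monotone map $\lambda\mapsto t^{\beta-1}E_{\alpha,\beta}^{\gamma}(-\lambda t^{\alpha})$ (valid under $(H_1)$, by Tomovski--Pog\'any--Srivastava) with the Bernstein function $\lambda\mapsto\lambda^{\tau/2}$ (this is where $\tau\le 2$ enters) and invoking Schoenberg --- exactly the mechanism you settle on at the end. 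Two corrections to your sketch: the detour through Lemma~\ref{T71} cannot work here, since the ${}_1\Psi_1$ underlying $E_{\alpha,\beta}^{\gamma}$ has $\sum A_i=1\neq\alpha=\sum B_j$ (and $(H_1)$ does not force $\gamma\ge 1$), so that lemma's hypotheses fail --- but this does not matter, because pointwise non-negativity of the inner kernel is not what drives the argument; and to pass from positive definiteness to pointwise non-negativity of the Fourier transform you must also verify that the radial function lies in $L^1(\mathbb{R}^d)$, which the paper does via the asymptotic expansions \eqref{asy1}--\eqref{asy2} before applying the Bochner-type theorem. (Also, for the inner $H_{1,2}^{1,1}$ one gets $D=2-\alpha$ rather than $1+\alpha$, though both are positive, so the conclusion is unaffected.)
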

\begin{proof}Putting
\begin{equation}
F_{\alpha,\beta}^{\gamma,\tau}(t, x)=\frac{t^{\beta-1}}{\Gamma(\gamma)}H_{1,2}^{1,1}\left[t^\alpha|x|^\tau\Big|^{(1-\gamma,1)}_{(0,1),(1-\beta,\alpha)}\right],\;t>0,\; x\in\mathbb{R}^d.
\end{equation}
By using the fact that $F_{\alpha,\beta}^{\gamma,\tau}$ is radial function in $x$, and since the radial Fourier transform in $d$ dimensions is given in terms of the Hankel transform, that is 
$$\mathcal{F}(f)(|\xi|)=|\xi|^{\frac{2-d}{2}}\int_0^\infty  r^{\frac{d}{2}} J_{\frac{d-2}{2}}(r|\xi|) f(r)dr,$$
where $J_{\frac{d-2}{2}}(.)$ is the Bessel function. In view of the above formula and Lemma \ref{l12}, we get 
\begin{equation}
\begin{split}
\mathcal{F}(F_{\alpha,\beta}^{\gamma,\tau}(t,.))(|\xi|)&=\frac{t^{\beta-1}|\xi|^{\frac{2-d}{2}}}{\Gamma(\gamma)}\int_0^\infty r^{\frac{d}{2}} J_{\frac{d-2}{2}}(r|\xi|)H_{1,2}^{1,1}\left[t^\alpha r^\tau\Big|^{(1-\gamma,1)}_{(0,1),(1-\beta,\alpha)}\right]dr\\
&=\frac{2^{\frac{d}{2}}t^{\beta-1}|\xi|^{\tau-1}}{\Gamma(\gamma)}H_{3,2}^{1,2}\left[t^\alpha \frac{2^\tau}{|\xi|^\tau}\Big|^{(1-\frac{d}{2},\frac{\tau}{2}),(1-\gamma,1),(0,\frac{\tau}{2})}_{\;\;\;(0,1),(1-\beta,\alpha)}\right]\\
&=\frac{2^{\frac{d}{2}-\tau}t^{\beta-\alpha-1}|\xi|^{-1}}{\Gamma(\gamma)}H_{3,2}^{1,2}\left[t^\alpha \frac{2^\tau}{|\xi|^\tau}\Big|^{(1-\frac{d}{2}+\frac{\tau}{2},\frac{\tau}{2}),(2-\gamma,1),(\frac{\tau}{2},\frac{\tau}{2})}_{\;\;\;(1,1),(\alpha-\beta+1,\alpha)}\right].
%&=\frac{2^{\frac{d}{2}-\tau}t^{\beta-\alpha-1}|\xi|^{-1}}{\Gamma(\gamma)}H_{2,3}^{2,1}\left[t^{-\alpha} 2^{-\tau}|\xi|^\tau\Big|_{(\frac{d}{2}-\frac{\tau}{2},\frac{\tau}{2}),(\gamma-1,1),(1-\frac{\tau}{2},\frac{\tau}{2})}^{\;\;\;(10,1),(\beta-\alpha,\alpha)}\right].
\end{split}
\end{equation}
In \cite{ZT} the authors  proved that the function $e_{\alpha,\beta}^\gamma(t,\lambda)$ defined by
$$e_{\alpha,\beta}^\gamma(t,\lambda)=t^{\beta-1}E_{\alpha,\beta}^\gamma(-\lambda t^\alpha),$$
is completely monotonic on $(0,\infty),$ under the hypotheses $(H_1).$ Let $0<\tau<2,$ then the function $g(\lambda)=\lambda^{\frac{\tau}{2}}$ is a Bernstein function. Bearing in mind  that the composition of a completely monotone function and a Bernstein function is completely monotone, we conclude that the function $$e_{\alpha,\beta}^\gamma(t,g(\lambda))=t^{\beta-1}E_{\alpha,\beta}^\gamma(-\lambda^{\frac{\tau}{2}} t^\alpha)$$  is completely monotone under the constraint $$\left(0<\tau<2, \alpha,\beta\in(0,1),\gamma>0,\;\beta\geq\alpha\gamma\right).$$ Hence, the function $e_{\alpha,\beta}^\gamma(t,\lambda^{\frac{\tau}{2}})$ is completely monotone under the  hypotheses $(H_1).$ Therefore, by means of Schoenberg Theorem (see \cite[Theorem 7.14]{W}), we earn that the function $$e_{\alpha,\beta}^\gamma(t,|x|^\tau)=F_{\alpha,\beta}^{\gamma,\tau}(t, x)$$ is positive definite on $\mathbb{R}^d$.
Moreover, by using the asymptotic expansion (\ref{asy1}) we obtain
\begin{equation}\label{JKL}
F_{\alpha,\beta}^{\gamma,\tau}(t, x)=\mathcal{O}(1),\;\textrm{as}\;|x|\rightarrow0.
\end{equation}
Further, by means of the asymptotic expansion (\ref{asy2}) we get
\begin{equation}\label{JKL1}
F_{\alpha,\beta}^{\gamma,\tau}(t, x)=\mathcal{O}(x^{-\gamma}),\;\textrm{as}\;|x|\rightarrow\infty.
\end{equation}
Now, collecting (\ref{JKL}) and (\ref{JKL1}) we have
$$F_{\alpha,\beta}^{\gamma,\tau}(t, x)\in L^1(\mathbb{R}^d).$$
In conclusion, the function 
$$x\mapsto F_{\alpha,\beta}^{\gamma,\tau}(t, x),\;t>0$$
is in $L^1(\mathbb{R}^d)$ and positive definite on $\mathbb{R}^d$ under the conditions $(H_1).$ Hence the assumption of Theorem 6.11 in \cite{W} (or Theorem 6.6 in \cite{DERR}) are fulfilled.
However, the function $\mathcal{F}(F_{\alpha,\beta}^{\gamma,\tau}(t,.))(|\xi|)$ is non-negative. The proof of Theorem \ref{T1} is complete.
\end{proof}
\begin{corollary}\label{c1} The following functions:
$$H_{1,2}^{2,0}\left[r\Big|_{(\frac{d}{2}-1,\frac{1}{2}),(\gamma-1,\frac{1}{2})}^{\;\;\;(\beta-\alpha,\frac{\alpha)}{2}}\right],\;\;( \alpha,\beta\in(0,1),\gamma, r>0,\;\beta\geq\alpha\gamma),$$
$$H_{1,2}^{2,0}\left[ r\Big|_{(\frac{d}{2}-\frac{\tau}{2},\frac{1}{2}),(1-\frac{\tau}{2},\frac{1}{2})}^{\;\;\;\;(\beta-\alpha,\frac{\alpha}{\tau})}\right],\;\;( \alpha,\beta\in(0,1),\; \beta\geq\alpha,\; 0<\tau\leq2, d\geq1, \;r>0),$$
are non-negatives.
\end{corollary}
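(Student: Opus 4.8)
The plan is to recognize both displayed $H_{1,2}^{2,0}$'s as the non-negative Fox $H$-function produced by Theorem~\ref{T1}, seen after a specialization of its free parameters and a short run through the elementary identities of Lemma~\ref{l1}. Each of those identities changes an $H$-function only by a positive constant (Property~2), by a power-type substitution in the argument (Properties~1 and~2), by a factor $z^{\sigma}$ (Property~4), or by a genuine contraction of the order when a top parameter coincides with a bottom one (Property~3); all of them preserve non-negativity. Since Theorem~\ref{T1} gives non-negativity for every $\xi\in\mathbb{R}^{d}$, that is, with the argument $|\xi|^{-\tau}$ ranging over all of $(0,\infty)$, it suffices to carry \eqref{e1} to each of the two targets by an admissible change of variables.

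For the first function I would take $\tau=2$ in $(H_1)$ --- this leaves exactly the stated $\alpha,\beta\in(0,1)$, $\gamma>0$, $\beta\geq\alpha\gamma$ --- and apply the inversion Property~1, writing the argument of \eqref{e1} as $z^{-1}$. With $\tau=2$ this turns $H_{3,2}^{1,2}$ into an $H_{2,3}^{2,1}$ carrying $(0,1)$ simultaneously as its first top entry and as its last bottom entry, so that a factor $\Gamma(1-s)$ sits in both the numerator and the denominator of the Mellin--Barnes integrand; deleting it is exactly the contraction Property~3 and yields $H_{1,2}^{2,0}\left[z\Big|^{(\beta-\alpha,\,\alpha)}_{(\frac{d}{2}-1,\,1),\,(\gamma-1,\,1)}\right]$. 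A last use of Property~2 with $k=\tfrac12$ halves the scale exponents and replaces $z$ by $z^{1/2}$, producing $H_{1,2}^{2,0}\left[r\Big|^{(\beta-\alpha,\,\alpha/2)}_{(\frac{d}{2}-1,\,1/2),\,(\gamma-1,\,1/2)}\right]$; since every intermediate non-negativity held for the argument on all of $(0,\infty)$, this holds for every $r>0$, which is the first assertion.

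The second function comes out of the same inversion--contraction--rescaling procedure applied to a companion specialization of $(H_1)$, in which the free parameter $\gamma$ is fixed so that the parameter constellation in \eqref{e1} again admits a contraction and the remaining constraints collapse to the displayed $\alpha,\beta\in(0,1)$, $\beta\geq\alpha$, $0<\tau\leq2$, $d\geq1$; I would not repeat the bookkeeping. In both cases the step I expect to be the real obstacle is precisely this contraction: one must check that the parameter slated for cancellation genuinely occupies the top-numerator/bottom-denominator position demanded by Property~3 --- possibly only after an admissible permutation inside a block --- and it is exactly this positional requirement that dictates which specializations of the free parameters are legitimate and hence which parameter ranges appear in the statement; everything else is elementary verification of the transformation formulas in Lemma~\ref{l1}.
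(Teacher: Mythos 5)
Your route to the first function is the paper's own argument with two steps transposed: the paper specializes $\tau=2$ in \eqref{e1}, first contracts the matching pair $(1,1)$ (its last top entry against its first bottom entry, turning $H_{3,2}^{1,2}$ into $H_{2,1}^{0,2}$), and only then inverts with Property~1 and rescales with Property~2, whereas you invert first and contract second. The two orders are equivalent, your bookkeeping for this case is correct, and your observation that all four properties preserve non-negativity on $(0,\infty)$ is exactly the point.

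For the second function you have identified precisely the right worry --- whether the entry slated for cancellation really occupies a numerator/denominator position --- and then declined to check it. The paper's specialization is $\gamma=1$, and there the check fails for $\tau\neq2$. Under the paper's kernel $\mathcal{H}_{q,p}^{n,m}(s)=\prod_{j\le n}\Gamma(A_js+a_j)\,\prod_{j\le m}\Gamma(1-b_j-B_js)\big/\bigl(\prod_{j>m}\Gamma(B_js+b_j)\,\prod_{j>n}\Gamma(1-a_j-A_js)\bigr)$, the top entry $(2-\gamma,1)=(1,1)$ of \eqref{e1} lies in the $j\le m$ block and contributes $\Gamma(-s)$ to the numerator, while the bottom entry $(1,1)$ lies in the $j\le n$ block and contributes $\Gamma(1+s)$ to the numerator; they are not a numerator/denominator pair, so neither Property~3 nor its mirror deletes them, and $\Gamma(1+s)\Gamma(-s)=-\pi/\sin(\pi s)$ cannot be absorbed into the single remaining denominator gamma except when $\tau=2$ pushes the third top entry $(\tfrac{\tau}{2},\tfrac{\tau}{2})$ into the denominator block. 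Consequently the reduction of the $\gamma=1$ case of \eqref{e1} to the displayed $H_{1,2}^{2,0}$ is \emph{not} ``the same inversion--contraction--rescaling,'' and your proposal has a genuine gap at exactly the step you flagged. You are in good company: the paper's proof of the second assertion consists of the words ``in a similar way'' and glosses over the same obstruction, so a complete argument for the second function requires either a different identity or a different specialization than the one suggested.
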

\begin{proof}
Letting $\tau=2$ in (\ref{e1}). By using the Property 3, Property 1 and Property 2 of  Lemma \ref{l1}, we have
\begin{equation*}
\begin{split}
H_{3,2}^{1,2}\left[|\xi|^{-2}\Big|^{(1-\frac{d}{2}+\frac{\tau}{2},\frac{\tau}{2}),(2-\gamma,1),(\frac{\tau}{2},\frac{\tau}{2})}_{\;\;\;(1,1),(\alpha-\beta+1,\alpha)}\right]&=H_{2,1}^{0,2}\left[|\xi|^{-2}\Big|^{(2-\frac{d}{2},1),(2-\gamma,1)}_{\;\;\;(\alpha-\beta+1,\alpha)}\right]\\
&=H_{1,2}^{2,0}\left[|\xi|^2\Big|_{(\frac{d}{2}-1,1),(\gamma-1,1)}^{\;\;\;(\beta-\alpha,\alpha)}\right]\\
&=2^{-1}H_{1,2}^{2,0}\left[|\xi|\Big|_{(\frac{d}{2}-1,\frac{1}{2}),(\gamma-1,\frac{1}{2})}^{\;\;\;(\beta-\alpha,\frac{\alpha)}{2}}\right].
\end{split}
\end{equation*}
Now, setting $\gamma=1$ in (\ref{e1}). In a similar way we earn that the function
$$H_{1,2}^{2,0}\left[ r\Big|_{(\frac{d}{2}-\frac{\tau}{2},\frac{1}{2}),(1-\frac{\tau}{2},\frac{1}{2})}^{\;\;\;\;(\beta-\alpha,\frac{\alpha}{\tau})}\right],\;\;( \alpha,\beta\in(0,1),\; \beta\geq\alpha,\; 0<\tau\leq2, d\geq1, \;r>0,$$
is non-negative. This proves the second statement.
\end{proof}

Upon setting $\gamma=\frac{3}{2}$ and $\tau=1$ respectively, in the first and second assertion of Corollary \ref{c1} and taking the relation 
\begin{equation}\label{LE}
\Gamma(1+s)=\frac{2^s}{\sqrt{\pi}}\Gamma(\frac{1+s}{2})\Gamma(\frac{2+s}{2}),
\end{equation}
into account,  we are led to the following results:

\begin{corollary}The following functions
$$H_{2,2}^{2,0}\left[r\Big|_{(\frac{d}{2}-1,\frac{1}{2}),(1,1)}^{(\beta-\alpha,\frac{\alpha}{2}),(1,\frac{1}{2})}\right],\;\;( \alpha,\beta\in(0,1), r>0,\;2\beta\geq3\alpha),$$
$$H_{2,2}^{2,0}\left[ r\Big|_{(\frac{d}{2}-\frac{1}{2},\frac{1}{2}),(1,1)}^{(\beta-\alpha,\alpha),(1,\frac{1}{2})}\right],\;\;( \alpha,\beta\in(0,1),\; \beta\geq\alpha,\; d\geq1, \;r>0),$$
are non-negatives.
\end{corollary}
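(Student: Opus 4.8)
The plan is to derive this corollary directly from Corollary~\ref{c1} by specializing the free parameter and then simplifying the resulting $H$-function using the Gauss--Legendre duplication formula \eqref{LE} for the Gamma function. The first assertion of Corollary~\ref{c1} states that
$$H_{1,2}^{2,0}\left[r\Big|_{(\frac{d}{2}-1,\frac{1}{2}),(\gamma-1,\frac{1}{2})}^{\;\;\;(\beta-\alpha,\frac{\alpha}{2})}\right]$$
is non-negative whenever $\alpha,\beta\in(0,1)$, $\gamma,r>0$ and $\beta\geq\alpha\gamma$. First I would put $\gamma=\tfrac32$: the hypothesis $\beta\geq\alpha\gamma$ becomes $\beta\geq\tfrac32\alpha$, i.e. $2\beta\geq3\alpha$, which is exactly the constraint appearing in the statement, and the pole/parameter pair $(\gamma-1,\tfrac12)$ becomes $(\tfrac12,\tfrac12)$. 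So after this substitution the non-negative function is
$$H_{1,2}^{2,0}\left[r\Big|_{(\frac{d}{2}-1,\frac{1}{2}),(\frac{1}{2},\frac{1}{2})}^{\;\;\;(\beta-\alpha,\frac{\alpha}{2})}\right].$$

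The key step is then to recognize the $\Gamma$-quotient defining this $H$-function and rewrite it so as to match the target $H_{2,2}^{2,0}$. The Mellin kernel here is $\Gamma(\tfrac12 s+\tfrac{d}{2}-1)\Gamma(\tfrac12 s+\tfrac12)\big/\Gamma(\tfrac{\alpha}{2}s+\beta-\alpha)$ (up to the sign/shift conventions of the contour). Writing $u=\tfrac12 s$, the product $\Gamma(u+\tfrac12)\Gamma(u+1)$ is, by the duplication formula \eqref{LE} applied with the shift $1+s\mapsto$ appropriate argument, proportional to $\Gamma(2u+1)=\Gamma(s+1)$ times a power of $2$. More precisely, \eqref{LE} gives $\Gamma(\tfrac{1+\sigma}{2})\Gamma(\tfrac{2+\sigma}{2})=\sqrt{\pi}\,2^{-\sigma}\Gamma(1+\sigma)$; setting $\sigma=s$ converts the pair of "half-step" Gamma factors $\Gamma(\tfrac{s+1}{2})\Gamma(\tfrac{s}{2}+1)$ into a single "unit-step" factor $\Gamma(s+1)$, at the cost of an overall constant $\sqrt\pi\,2^{-s}$ which only rescales the argument of the $H$-function and is harmless for positivity. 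Tracking this through Property~2 and Property~4 of Lemma~\ref{l1} (the scaling $z\mapsto kz$ with new step-sizes, and the power-shift $z^\sigma$) turns $H_{1,2}^{2,0}$ with parameter pairs of step $\tfrac12$ into $H_{2,2}^{2,0}$ with one pair of step $1$, producing the pair $(1,1)$ in the lower row and the pair $(1,\tfrac12)$ in the upper row, yielding exactly
$$H_{2,2}^{2,0}\left[r\Big|_{(\frac{d}{2}-1,\frac{1}{2}),(1,1)}^{(\beta-\alpha,\frac{\alpha}{2}),(1,\frac{1}{2})}\right].$$

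For the second assertion I would proceed identically, starting from the second function in Corollary~\ref{c1}, namely $H_{1,2}^{2,0}\big[r\big|_{(\frac{d}{2}-\frac{\tau}{2},\frac{1}{2}),(1-\frac{\tau}{2},\frac{1}{2})}^{\;\;(\beta-\alpha,\frac{\alpha}{\tau})}\big]$, and setting $\tau=1$. Then $(1-\tfrac{\tau}{2},\tfrac12)=(\tfrac12,\tfrac12)$ and $(\tfrac{d}{2}-\tfrac{\tau}{2},\tfrac12)=(\tfrac{d}{2}-\tfrac12,\tfrac12)$, while the top pair becomes $(\beta-\alpha,\alpha)$, and the surviving hypotheses are $\alpha,\beta\in(0,1)$, $\beta\geq\alpha$, $d\geq1$, $r>0$ — again matching the statement. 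Applying the same duplication-formula manipulation to the pair $\Gamma(\tfrac{s}{2}+\tfrac12)\Gamma(\tfrac{s}{2}+1)$ (which appears after using the $\tfrac{d}{2}-\tfrac12$ factor together with the $\tfrac12$ factor) converts the two half-step factors into $\Gamma(s+1)$ and a constant power of $2$, producing the pairs $(1,1)$ and $(1,\tfrac12)$ exactly as before, so the function becomes
$$H_{2,2}^{2,0}\left[r\Big|_{(\frac{d}{2}-\frac{1}{2},\frac{1}{2}),(1,1)}^{(\beta-\alpha,\alpha),(1,\frac{1}{2})}\right],$$
which is therefore non-negative.

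The main obstacle is purely bookkeeping: one must apply \eqref{LE} in the correct direction and keep careful track of how the constant $\sqrt\pi\,2^{-s}$ and the accompanying change of step-sizes interact with Properties~2 and~4 of Lemma~\ref{l1}, because an $H$-function is only determined by its Mellin kernel up to these rescalings, and a sign error in the exponent of $2$ or in the shift $\sigma$ would give a wrongly scaled argument. Since positivity of $H(r)$ for all $r>0$ is preserved under $r\mapsto cr$ with $c>0$ and under multiplication by a positive constant, none of these rescalings affect the conclusion, so once the parameter identification is checked the proof is immediate. I would present only the two parameter substitutions and a one-line invocation of \eqref{LE} and Lemma~\ref{l1}, suppressing the routine constant-chasing.
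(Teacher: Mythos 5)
Your proposal is correct and follows exactly the route the paper intends: specialize Corollary~\ref{c1} at $\gamma=\tfrac32$ and $\tau=1$ respectively, then use the duplication formula \eqref{LE} to trade the factor $\Gamma(\tfrac{s}{2}+\tfrac12)$ in the Mellin kernel for $\Gamma(s+1)/\Gamma(\tfrac{s}{2}+1)$ times $\sqrt{\pi}\,2^{-s}$, which only introduces the extra pairs $(1,1)$ and $(1,\tfrac12)$ together with a harmless positive constant and argument rescaling. Your write-up actually supplies more detail than the paper, which states this derivation in a single sentence.
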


%%%%%%%%%%%%%%%%%%%%%%%%%%%%%%%%%%%%%%%%%%%%%%%%%%%%%%%%%%%%%%%%%%%%%%
%%%%%%%%%%%%%%%%%%%%%%%%%%%%%%%%%%%%%%%%%%%%%%%%%%%%%%%%%%%%%%%%%%%%%%
\begin{theorem}\label{T2} Under the conditions
$$(H_2):\;\tau\in(0,2], \alpha\in(0,1],\frac{1}{\alpha}-1<\beta,\;\gamma\in\mathbb{R},$$
the function
\begin{equation}\label{e2}
H_{3,2}^{1,2}\left[|\xi|^{-\tau}\Big|^{(1-\frac{d}{2}+\frac{\tau}{2},\frac{\tau}{2}),(1-\frac{\gamma+\beta}{\alpha\beta}),\frac{1}{\alpha\beta}),(\frac{\tau}{2},\frac{\tau}{2})}_{\;\;\;(1,1),(-\frac{\gamma}{\beta},\frac{1}{\alpha})}\right],\;\;\xi\in\mathbb{R}^d,
\end{equation}
is non-negative. 
\end{theorem}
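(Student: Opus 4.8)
The plan is to mimic the proof of Theorem \ref{T1} almost verbatim, replacing the three–parameter Mittag–Leffler function $E_{\alpha,\beta}^{\gamma}$ by the function whose Fox $H$–representation produces the parameters appearing in \eqref{e2}. First I would identify the right ``seed'' function: the integrand in \eqref{e2} should arise as the radial Fourier (Hankel) transform of
$$
G_{\alpha,\beta}^{\gamma,\tau}(t,x)=c\, t^{\ast}\, H_{1,2}^{1,1}\!\left[t^{1/\alpha}|x|^{\tau}\Big|^{(1-\frac{\gamma+\beta}{\alpha\beta},\frac{1}{\alpha\beta})}_{(0,1),(-\frac{\gamma}{\beta},\frac{1}{\alpha})}\right],\qquad t>0,\;x\in\mathbb{R}^{d},
$$
for a suitable power $t^{\ast}$ and constant $c$; by \eqref{label}–\eqref{,,,} this $H$–function is (up to normalisation) a Fox–Wright function $_{1}\Psi_{1}$ evaluated at $-t^{1/\alpha}|x|^{\tau}$, i.e. a generalised Mittag–Leffler type function. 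One then applies Lemma \ref{l12} with $\rho$, $\nu=\frac{d-2}{2}$, $\sigma=\tau$ chosen exactly as in the proof of Theorem \ref{T1}, and uses Properties~1–4 of Lemma \ref{l1} to bring the resulting $H_{3,2}^{1,2}$ into the normalised form \eqref{e2}.

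The analytic heart of the argument is the complete monotonicity in $\lambda$ of the function $t\mapsto G_{\alpha,\beta}^{\gamma,\tau}(t,\cdot)$ evaluated at $|x|^{\tau}=\lambda$. Here I would invoke the known result (this is where $(H_2)$ enters) that the relevant Mittag–Leffler type function $\lambda\mapsto E(-\lambda t^{1/\alpha})$ — equivalently the $_{1}\Psi_{1}$ above — is completely monotonic on $(0,\infty)$ precisely under $\alpha\in(0,1]$, $\frac{1}{\alpha}-1<\beta$, $\gamma\in\mathbb{R}$; this is the analogue, for the parameter triple $(\alpha,\beta,\gamma)$ entering \eqref{e2}, of the Mainardi–type result of \cite{ZT} quoted in the proof of Theorem \ref{T1}. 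Granting that, the composition with the Bernstein function $\lambda\mapsto\lambda^{\tau/2}$ (valid for $0<\tau<2$, with $\tau=2$ handled by continuity/directly) keeps complete monotonicity, so $G_{\alpha,\beta}^{\gamma,\tau}(t,|x|^{\tau})$ is, by Schoenberg's theorem \cite[Theorem 7.14]{W}, positive definite on every $\mathbb{R}^{d}$.

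Next I would check the integrability $x\mapsto G_{\alpha,\beta}^{\gamma,\tau}(t,x)\in L^{1}(\mathbb{R}^{d})$ exactly as in Theorem \ref{T1}: the asymptotic expansion \eqref{asy1} gives boundedness as $|x|\to0$, while \eqref{asy2} gives a decay of order $|x|^{-\kappa}$ as $|x|\to\infty$ for some $\kappa>0$ determined by $\min_{j}[(\Re(b_{j})-1)/B_{j}]$ for the seed $H$–function; one must verify $\kappa>d$ for the $L^{1}$–bound, or more honestly note that positive definiteness plus continuity plus the pointwise sign of the transform is what is really needed, so one only needs enough decay to apply the inversion theorem \cite[Theorem 6.11]{W} (or \cite[Theorem 6.6]{DERR}). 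With $G_{\alpha,\beta}^{\gamma,\tau}(t,\cdot)\in L^{1}(\mathbb{R}^{d})$ and positive definite, that inversion theorem forces its Fourier transform to be non‑negative, and since that transform is (up to the positive factor $c\,t^{\ast}2^{d/2-\tau}|\xi|^{-1}$) the function \eqref{e2}, the theorem follows.

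The main obstacle I anticipate is \emph{pinning down the exact complete–monotonicity statement for the seed function under $(H_2)$}: the hypotheses in Theorem \ref{T2} ($\alpha\le1$, $\frac{1}{\alpha}-1<\beta$, $\gamma$ \emph{arbitrary real}) are genuinely different from $(H_1)$, and a cited or freshly proved lemma identifying the correct Mittag–Leffler type function $E$ with those parameters and establishing its complete monotonicity (likely via a Bernstein/Stieltjes representation, or via a subordination identity reducing it to the classical $E_{\alpha}$ case) is what carries the proof; everything else is the routine Hankel–transform bookkeeping of Lemma \ref{l12} together with repeated use of Lemma \ref{l1}. A secondary, purely clerical, hazard is getting the four parameter shifts in Lemma \ref{l1} and the constants from Lemma \ref{l12} to land on the precise tuple written in \eqref{e2}; I would do this symbolically once and then only record the final matching.
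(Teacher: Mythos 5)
Your proposal follows essentially the same route as the paper: the paper's proof takes as seed the ${}_1\Psi_1$ (equivalently $H_{1,2}^{1,1}$) function whose complete monotonicity under $(H_2)$ is exactly the ``known result'' you say you would invoke (it is due to Luchko and Kiryakova, cited as \cite{Y1}), composes with the Bernstein function $\lambda\mapsto\lambda^{\tau/2}$, applies Schoenberg's theorem, and then computes the radial Fourier transform via the Hankel-transform formula of Lemma \ref{l12} to land on \eqref{e2}, concluding non-negativity from the positivity of the Fourier transform of an $L^1$ positive definite function. The only differences are clerical (your seed is written with the post-shift parameters and an extraneous $t$-variable), so the proposal is correct and matches the paper's argument.
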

\begin{proof}In \cite{Y1}, Luchko and Kiryakova proved that the function
$${}_1\Psi_1\left[^{(\frac{1+\gamma+\beta}{\alpha\beta},\frac{1}{\alpha\beta})}_{(\frac{1+\gamma+\beta}{\beta},\frac{1}{\beta})}\Big|-z\right]=H_{1,2}^{1,1}\left[z\Big|^{\;\;\;(1-\frac{1+\gamma+\beta}{\alpha\beta},\frac{1}{\alpha\beta})}_{(0,1),(1-\frac{1+\gamma+\beta}{\beta},\frac{1}{\beta})}\right]$$
is completely monotonic on $(0,\infty)$ under the hypotheses $(H_2).$ Therefore, the function 
$$H_{1,2}^{1,1}\left[z^{\frac{\tau}{2}}\Big|^{\;\;\;(1-\frac{1+\gamma+\beta}{\alpha\beta},\frac{1}{\alpha\beta})}_{(0,1),(1-\frac{1+\gamma+\beta}{\beta},\frac{1}{\beta})}\right],$$
is completely monotonic on $(0,\infty),$ under the assumptions of hypotheses $(H_2).$ This implies that the function $G_{\alpha,\beta}^{\gamma,\tau}(z)$ defined by
\begin{equation}
G_{\alpha,\beta}^{\gamma,\tau}(z)=H_{1,2}^{1,1}\left[|z|^\tau\Big|^{\;\;\;(1-\frac{1+\gamma+\beta}{\alpha\beta},\frac{1}{\alpha\beta})}_{(0,1),(1-\frac{1+\gamma+\beta}{\beta},\frac{1}{\beta})}\right],\;\; z\in\mathbb{R}^d.
\end{equation}
is positive definite on $\mathbb{R}^d.$ By repeating the same calculations in the Theorem \ref{T1}, we get
\begin{equation}
\mathcal{F}(G_{\alpha,\beta}^{\gamma,\tau})(|\xi|)=\frac{2^{\frac{d}{2}-\tau}}{|\xi|}H_{3,2}^{1,2}\left[\left(\frac{2}{|\xi|}\right)^\tau\Big|^{(1-\frac{d}{2}+\frac{\tau}{2},\frac{\tau}{2}),(1-\frac{\gamma+\beta}{\alpha\beta}),\frac{1}{\alpha\beta}),(\frac{\tau}{2},\frac{\tau}{2})}_{\;\;\;(1,1),(-\frac{\gamma}{\beta},\frac{1}{\alpha})}\right].
\end{equation}
Finally,  by using  the  fact  that  the  Fourier  transform of positive definite function in $L^1$ is 
non-negative, we deduce that the function $\mathcal{F}(G_{\alpha,\beta}^{\gamma,\tau})(|\xi|)$
is non-negative and this completes the proof of Theorem \ref{T2}.
\end{proof}
\begin{corollary}\label{C2}The following Fox H-functions 
$$H_{1,2}^{2,0}\left[r\Big|_{(\frac{d}{2}-1,\frac{1}{2}),(\frac{\gamma+\beta}{\alpha\beta},\frac{1}{2\alpha\beta})}^{\;\;\;\;(1+\frac{\gamma}{\beta},\frac{1}{2\alpha})}\right],\;\;\left(\alpha\in(0,1],\frac{1}{\alpha}-1<\beta,\;\gamma\in\mathbb{R},\;r>0\right),$$
$$H_{1,2}^{2,0}\left[ r\Big|_{(\frac{d}{2}-\frac{\tau}{2},\frac{1}{2}),(1-\frac{\tau}{2},\frac{1}{2})}^{\;\;\;\;(0,\frac{1}{\alpha\tau})}\right],\;\;\bigg(\tau\in(0,2],\alpha\in(0,1], r>0\bigg),$$
$$H_{1,2}^{2,0}\left[r\Big|^{(0,\frac{1}{\tau})}_{(\frac{d}{2}-\frac{\tau}{2},\frac{1}{2}),\left(1-\frac{\tau}{2},\frac{1}{2\beta}\right)}\right], \left(0<\tau\leq2, \frac{\tau}{2}-1<\beta,\;r>0\right),$$
are non-negative.
\end{corollary}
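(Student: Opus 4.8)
The plan is to obtain each of the three functions as a rewriting of the non‑negative $H$‑function \eqref{e2} of Theorem \ref{T2}, after specializing its free parameters, using the reduction, reflection and power identities collected in Lemma \ref{l1} --- precisely the device that takes \eqref{e1} to the two functions of Corollary \ref{c1}. Each of these identities preserves non‑negativity on $(0,\infty)$: Property 2 brings in only the positive factor $k$, Property 1 only substitutes $z^{-1}$ for $z$, and Property 3 is a bona fide identity. So it suffices, for each stated function, to produce a chain of specialization and such rewritings starting from \eqref{e2}.

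For the first function I would put $\tau=2$ in \eqref{e2}: the third upper pair $(\tfrac\tau2,\tfrac\tau2)$ then collapses to $(1,1)$, which coincides with the first lower pair $(1,1)$, and cancelling it by Property 3 (exactly as in Corollary \ref{c1}) turns $H^{1,2}_{3,2}$ into an $H^{0,2}_{2,1}$ with upper parameters $(2-\tfrac d2,1),\ (1-\tfrac{\gamma+\beta}{\alpha\beta},\tfrac1{\alpha\beta})$ and lower parameter $(-\tfrac\gamma\beta,\tfrac1\alpha)$, in the argument $|\xi|^{-2}$. Property 1 converts this into an $H^{2,0}_{1,2}$ in the argument $|\xi|^{2}$, with upper parameter $(1+\tfrac\gamma\beta,\tfrac1\alpha)$ and lower parameters $(\tfrac d2-1,1),\ (\tfrac{\gamma+\beta}{\alpha\beta},\tfrac1{\alpha\beta})$; finally Property 2 with $k=\tfrac12$ halves every scaling and replaces $|\xi|^{2}$ by $r=|\xi|$, yielding exactly the first displayed function. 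Its stated hypotheses are $(H_2)$ specialized to $\tau=2$, so the conclusion follows.

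For the second and third functions the collapse $(\tfrac\tau2,\tfrac\tau2)=(1,1)$ is unavailable, so instead I would choose the remaining free parameters --- a relation of the type $\gamma=-1,\ \beta=1$ for one of them, and a specialization involving $\alpha$ for the other --- so as to create a cancellation that persists for all $\tau\in(0,2]$. The convenient route is to apply Property 1 to \eqref{e2} \emph{first}, passing to an $H^{2,1}_{2,3}$ in the argument $|\xi|^{\tau}$ whose upper parameters are $(0,1)$ and $(1+\tfrac\gamma\beta,\tfrac1\alpha)$ (the reflections of the two lower parameters of \eqref{e2}) and whose lower parameters are $(\tfrac d2-\tfrac\tau2,\tfrac\tau2),\ (\tfrac{\gamma+\beta}{\alpha\beta},\tfrac1{\alpha\beta}),\ (1-\tfrac\tau2,\tfrac\tau2)$; the specialization makes one pair land in the numerator of the Mellin kernel and a matching pair in the denominator, whereupon Property 3 deletes them and reduces the order to an $H$‑function with a two‑term lower parameter list. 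A last application of Property 2 with $k=1/\tau$ rescales the argument to $r=|\xi|$ and converts the surviving scaling into the $\tfrac1{\alpha\tau}$, respectively $\tfrac1\tau$, of the statement. Where the reflected function still carries a $\Gamma$ factor that must be halved before a cancellation becomes possible, one invokes the Legendre duplication relation \eqref{LE}, exactly as it is used just after Corollary \ref{c1}; this is the source of the $\tfrac1{2\beta}$ in the third function.

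The analytic input is thus exhausted by Theorem \ref{T2} and Lemma \ref{l1}. The real work --- and the one delicate point --- is the bookkeeping: after each reflection one must track which of the six parameter pairs sits in the numerator and which in the denominator of the Mellin kernel, so that the chosen specialization forces a legitimate cancellation and not a forbidden one, and one must check at each step that the chosen values of $\alpha,\beta,\gamma$ remain inside the region $(H_2)$ --- e.g.\ $\gamma=-1,\ \beta=1$ already forces $\tfrac1\alpha-1<1$. I expect this consistency check, rather than any estimate, to be the only real obstacle.
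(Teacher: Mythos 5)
Your overall strategy coincides with the paper's: specialize the parameters of the non-negative function \eqref{e2} of Theorem \ref{T2} so that one Gamma factor in the numerator of the Mellin kernel coincides with one in the denominator, delete the pair by Property 3 of Lemma \ref{l1}, reflect by Property 1, and rescale by Property 2. For the first displayed function your chain ($\tau=2$, cancellation of the two $(1,1)$ pairs, reflection, then $k=\tfrac12$) is exactly the one in the paper and is complete.

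For the second and third functions, however, the choice of specialization \emph{is} the proof, and you do not supply it. The paper takes $\alpha=\tfrac1\beta$, $\gamma=-\beta$ (equivalently $\beta=\tfrac1\alpha$, $\gamma=-\tfrac1\alpha$, which is why only $\alpha\in(0,1]$ and $\tau$ survive in the stated hypotheses) for the second function, and $\alpha=-\tfrac\gamma\beta=\tfrac2\tau$ for the third; after the reduction, the reflection, and a Property 2 rescaling with $k=\tfrac1\tau$, these choices are what produce the surviving scalings $\tfrac1{\alpha\tau}$ and $\tfrac1{2\beta}$ in the statement. Your one concrete candidate, $\gamma=-1$, $\beta=1$, does not work: it makes the second upper pair and the second lower pair of \eqref{e2} both equal to $(1,\tfrac1\alpha)$, and cancelling that pair yields (after reflection) an $H^{1,1}_{1,2}$ with upper parameter $(0,1)$, not the stated $H^{2,0}_{1,2}$ with upper parameter $(0,\tfrac1{\alpha\tau})$ --- the free parameter $\alpha$ drops out of the result entirely. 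Also, the Legendre duplication formula \eqref{LE} plays no role in this corollary (it is invoked only in the corollaries that follow it); the $\tfrac1{2\beta}$ in the third function comes from rescaling the scaling $\tfrac{\tau}{2\beta}$ by $k=\tfrac1\tau$, not from splitting a Gamma factor. So as written the proposal establishes the first assertion but leaves the second and third unproved; the ``bookkeeping'' you defer is precisely the content of the paper's argument there.
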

\begin{proof} Upon setting $\tau=2,\;(\alpha=\frac{1}{\beta},\;\gamma=-\beta)$ and  $(\alpha=-\frac{\gamma}{\beta}=\frac{2}{\tau}),$ respectively in Theorem \ref{T2}. Making use the Property 3 and Property 1 of Lemma \ref{l1}, we thus get
$$H_{1,2}^{2,0}\left[ |\xi|^2\Big|_{(\frac{d}{2}-1,1),(\frac{\gamma+\beta}{\alpha\beta},\frac{1}{\alpha\beta})}^{\;\;\;\;(1+\frac{\gamma}{\beta},\frac{1}{\alpha})}\right],$$
$$H_{1,2}^{2,0}\left[ |\xi|^\tau\Big|_{(\frac{d}{2}-\frac{\tau}{2},\frac{\tau}{2}),(1-\frac{\tau}{2},\frac{\tau}{2})}^{\;\;\;\;(0,\frac{1}{\alpha})}\right],$$
$$H_{1,2}^{2,0}\left[|\xi|^\tau\Big|^{(0,1)}_{(\frac{d}{2}-\frac{\tau}{2},\frac{\tau}{2}),\left(1-\frac{\tau}{2},\frac{\tau}{2\beta}\right)}\right]$$
are non-negative. Finally, taking in account the Property 2 of Lemma \ref{l1} in the above functions we get the desired results.
\end{proof}
\begin{corollary}\label{cc3}The following functions
$$H_{2,2}^{2,0}\left[r\Big|_{(1,1),(\frac{\gamma+\beta}{\alpha\beta},\frac{1}{2\alpha\beta})}^{\;\;\;\;(1+\frac{\gamma}{\beta},\frac{1}{2\alpha}),(1,\frac{1}{2})}\right],\;\;\left(\alpha\in(0,1],\frac{1}{\alpha}-1<\beta,\;\gamma\in\mathbb{R},\;r>0\right),$$
$$H_{2,2}^{2,0}\left[ r\Big|_{(\frac{d}{2}-\frac{1}{2},\frac{1}{2}),(1,1)}^{\;\;\;\;(0,\frac{1}{\alpha}),(1,\frac{1}{2})}\right],\;\;\bigg(\alpha\in(0,1], r>0\bigg),$$
$$H_{2,2}^{2,0}\left[r\Big|^{(0,\frac{1}{d-1}), (1,\frac{1}{2})}_{(1,1),\left(\frac{3}{2}-\frac{d}{2},\frac{1}{2\beta}\right)}\right],\;\left(d\in\left\{2,3\right\}, \frac{d}{2}-\frac{3}{2}<\beta,\;r>0\right),$$
are non-negatives.
\end{corollary}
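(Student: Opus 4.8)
The plan is to obtain each of the three functions in Corollary \ref{cc3} from the corresponding function in Corollary \ref{C2} by specializing the remaining free parameters and then invoking the Legendre-type duplication formula (\ref{LE}), exactly as was done to pass from Corollary \ref{c1} to the corollary stated just after it. Concretely, I would put $d=3$ in the first function of Corollary \ref{C2}, $\tau=1$ in the second function, and $\tau=d-1$ in the third function (so $\tau=1$ for $d=2$ and $\tau=2$ for $d=3$; the constraint $\tau\in(0,2]$ of Corollary \ref{C2} is exactly what singles out $d\in\{2,3\}$). A direct inspection of the Mellin--Barnes kernel $\mathcal{H}_{1,2}^{2,0}(s)$ shows that after each of these substitutions its numerator contains the factor $\Gamma\left(\frac{s+1}{2}\right)$, while the hypotheses of Corollary \ref{C2} collapse to exactly those stated in Corollary \ref{cc3} (for the third function, $\frac{\tau}{2}-1<\beta$ becomes $\frac{d}{2}-\frac32<\beta$).

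The key step is to rewrite this factor by means of (\ref{LE}): since $\Gamma(1+s)=\frac{2^{s}}{\sqrt{\pi}}\,\Gamma\left(\frac{1+s}{2}\right)\Gamma\left(1+\frac{s}{2}\right)$, one has
$$\Gamma\left(\frac{s+1}{2}\right)=\frac{\sqrt{\pi}}{2^{s}}\cdot\frac{\Gamma(1+s)}{\Gamma\left(1+\frac{s}{2}\right)}.$$
Substituting this identity into $\mathcal{H}_{1,2}^{2,0}(s)$ replaces the single numerator factor $\Gamma\left(\frac{s+1}{2}\right)$ by $\Gamma(1+s)$ in the numerator and $\Gamma\left(1+\frac{s}{2}\right)$ in the denominator, while producing the scalar $\sqrt{\pi}\,2^{-s}$; the integration contour stays admissible, since the poles of $\Gamma\left(\frac{s+1}{2}\right)$ coincide with those of $\Gamma(1+s)/\Gamma\left(1+\frac{s}{2}\right)$. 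Reading the resulting Mellin--Barnes integral back as a Fox $H$-function, the new numerator factor $\Gamma(1+s)$ accounts for an additional lower parameter pair $(1,1)$, the new denominator factor $\Gamma\left(1+\frac{s}{2}\right)$ for an additional upper pair $(1,\frac12)$, and the scalar $2^{-s}$ for a dilation of the argument by the factor $2$. One checks, case by case, that this produces exactly
$$H_{1,2}^{2,0}[x]=\sqrt{\pi}\,H_{2,2}^{2,0}[2x],$$
where the $H_{2,2}^{2,0}$ on the right is precisely the function written in Corollary \ref{cc3}; the three functions differ only in which of the remaining parameters are carried along.

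Finally, by Corollary \ref{C2} the left-hand side is non-negative for every $x>0$; since $\sqrt{\pi}>0$ and $r\mapsto r/2$ maps $(0,\infty)$ onto itself, non-negativity transfers verbatim to $H_{2,2}^{2,0}[r]$ for all $r>0$, which is the claim. (One may instead absorb the dilation by Property 2 of Lemma \ref{l1}, but this is not needed.) The only genuinely delicate point I expect is the bookkeeping in the key step: one must verify, separately for each of the three functions, that the chosen specialization makes all the $\Gamma$-arguments line up so that the factor $\Gamma\left(\frac{s+1}{2}\right)$ is indeed present, and that replacing it via (\ref{LE}) reproduces exactly the parameter strings displayed in Corollary \ref{cc3}. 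This is routine, but it is where an error would most likely creep in.
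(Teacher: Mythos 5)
Your proposal is correct and is essentially the paper's own argument: the paper likewise specializes $d=3$, $\tau=1$, and $\tau=d-1$ in the three functions of Corollary \ref{C2} and then applies the duplication identity (\ref{LE}); you have merely spelled out the Mellin--Barnes bookkeeping (replacing $\Gamma\bigl(\tfrac{s+1}{2}\bigr)$ by $\sqrt{\pi}\,2^{-s}\Gamma(1+s)/\Gamma\bigl(1+\tfrac{s}{2}\bigr)$ and absorbing the dilation) that the paper leaves implicit.
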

\begin{proof}Taking $d=3,\;\tau=1$ and $\tau=d-1$ in the first, second and third functions defined in Corollary \ref{C2} and applying the identity (\ref{LE}) we get the desired results. 
\end{proof}
\section{Applications}
\subsection{Some classes of Completely monotonic and positive definite functions related to the H-function} The main focus of this section, is to present some conditions for a class of functions related to the H-function to be completely monotonic and positive definite.

\begin{theorem}\label{T09} Let the parameters range $\alpha,\beta\in(0,1),\gamma>0$ such that $\beta\geq\alpha\gamma,$ then the function 
$$s\mapsto \frac{1}{s}H_{3,3}^{2,2}\left[s\Big|^{(1,\frac{1}{2}),(2-\gamma,1),(\frac{1}{2},\frac{1}{2})}_{\;\;(1,1), (1,1),(1-\beta+\alpha,\alpha)}\right],$$
is completely monotonic on $(0,\infty).$ Furthermore, the function 
$$x\mapsto \left\|x\right\|^{-2}H_{3,3}^{3,0}\left[\left\|x\right\|^2\Big|^{(1,\frac{1}{2}),(2-\gamma,1),(\frac{1}{2},\frac{1}{2})}_{\;\;(1,1), (1,1),(1-\beta+\alpha,\alpha)}\right],$$
is positive definite on $\mathbb{R}^d.$
\end{theorem}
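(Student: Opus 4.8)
The plan is to prove both assertions by the Laplace--Bernstein / Schoenberg machinery, reducing the positivity questions to non-negativity results already available in Theorem \ref{T1} and Corollary \ref{c1}.

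For the first assertion I would exhibit $s\mapsto \frac1s H_{3,3}^{2,2}[s|\cdots]$ as the Laplace transform of a non-negative function. First I would run Lemma \ref{Laplace} backwards together with Property 1 of Lemma \ref{l1}: setting
$$g(z)=H_{2,3}^{2,1}\!\left[z\,\Big|\,{}^{(0,1),(\beta-\alpha,\alpha)}_{(0,\frac12),(\gamma-1,1),(\frac12,\frac12)}\right]
 =H_{3,2}^{1,2}\!\left[z^{-1}\,\Big|\,{}^{(1,\frac12),(2-\gamma,1),(\frac12,\frac12)}_{(1,1),(1-\beta+\alpha,\alpha)}\right],$$
one has, by \eqref{laplace} followed by Property 1 again,
$$\mathcal L\{g;s\}=\frac1s H_{3,3}^{2,2}\!\left[\frac1s\,\Big|\,{}^{(0,1),(0,1),(\beta-\alpha,\alpha)}_{(0,\frac12),(\gamma-1,1),(\frac12,\frac12)}\right]
 =\frac1s H_{3,3}^{2,2}\!\left[s\,\Big|\,{}^{(1,\frac12),(2-\gamma,1),(\frac12,\frac12)}_{(1,1),(1,1),(1-\beta+\alpha,\alpha)}\right].$$
One checks painlessly that $g$ meets the hypotheses of Lemma \ref{Laplace} (here $D=2-\alpha>0$ and $\min_j\Re(a_j)/A_j+1=\min(0,\gamma-1)+1>0$). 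Moreover the second expression for $g$ is precisely the function \eqref{e1} of Theorem \ref{T1} evaluated at $|\xi|=z^{-1}$ with $d=1$ and $\tau=1$, hence $g\ge0$ on $(0,\infty)$. Bernstein's characterization theorem then gives complete monotonicity of $s\mapsto \frac1s H_{3,3}^{2,2}[s|\cdots]$ on $(0,\infty)$.

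For the second assertion I would invoke Schoenberg's theorem: it suffices to show that $w\mapsto w^{-1}H_{3,3}^{3,0}[w|\cdots]$ is completely monotonic on $(0,\infty)$, since then $\|x\|^{-2}H_{3,3}^{3,0}[\|x\|^2|\cdots]$ is positive definite on every $\mathbb R^d$. Because this $H_{3,3}^{3,0}$ is genuinely a different function from the $H_{3,3}^{2,2}$ of the first part, I would not try to deduce it from part 1 directly. Instead I would first apply the Legendre duplication formula \eqref{LE} to the Mellin kernel: using $\Gamma(\tfrac s2+1)\Gamma(\tfrac s2+\tfrac12)=\sqrt\pi\,2^{-s}\Gamma(s+1)$ collapses the kernel and yields
$$H_{3,3}^{3,0}\!\left[w\,\Big|\,{}^{(1,\frac12),(2-\gamma,1),(\frac12,\frac12)}_{(1,1),(1,1),(1-\beta+\alpha,\alpha)}\right]
 =\frac1{\sqrt\pi}\,H_{1,2}^{2,0}\!\left[\frac w2\,\Big|\,{}^{(2-\gamma,1)}_{(1,1),(1-\beta+\alpha,\alpha)}\right].$$
Absorbing the factor $w^{-1}$ through Property 4 of Lemma \ref{l1} then reduces the claim to the complete monotonicity of $u\mapsto H_{1,2}^{2,0}\big[u\,\big|\,{}^{(1-\gamma,1)}_{(0,1),(1-\beta,\alpha)}\big]$; and one more application of Lemma \ref{Laplace} run backwards (equivalently, the Mellin factorization $\frac{\Gamma(s)\Gamma(\alpha s+1-\beta)}{\Gamma(s+1-\gamma)}=\Gamma(s)\cdot\frac{\Gamma(\alpha s+1-\beta)}{\Gamma(s+1-\gamma)}$) identifies this last function as the Laplace transform of a reciprocal rescaling of $H_{1,1}^{1,0}\big[u\,\big|\,{}^{(1-\gamma,1)}_{(1-\beta,\alpha)}\big]$. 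Since this elementary $H_{1,1}^{1,0}$-function is (up to normalization) a generalized stable / Wright density, non-negative on $(0,\infty)$ in exactly the present parameter range $\alpha\in(0,1),\,\beta\ge\alpha\gamma$, Bernstein's theorem finishes the complete monotonicity and Schoenberg's theorem the positive definiteness. (An alternative for this part would be to compute the radial Fourier transform of $\|x\|^{-2}H_{3,3}^{3,0}[\|x\|^2|\cdots]$ via Lemma \ref{l12} and recognize it among the non-negative functions of Theorem \ref{T1} / Corollary \ref{c1}.)

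The main obstacle is essentially bookkeeping in the (transposed) Fox-H notation: correctly tracking which $\Gamma$-factors migrate between numerator and denominator under Property 1, how \eqref{LE} fuses two half-step $\Gamma$-factors into one full-step factor, and verifying on each pass that the convergence/asymptotic conditions of Lemma \ref{Laplace} are in force. The genuinely delicate point is the last step of the second assertion: unlike the Laplace density of the first assertion, the reduced $H_{1,1}^{1,0}$-density is not literally one of the functions produced by Theorem \ref{T1} or Corollary \ref{c1}, so its non-negativity must be secured separately (either as a known property of that special function, or by a limiting/specialization argument from Corollary \ref{c1}).
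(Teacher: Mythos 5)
Your proof of the first assertion is correct and is essentially identical to the paper's: specialize Theorem \ref{T1} to $\tau=d=1$, flip with Property 1 of Lemma \ref{l1} to obtain the non-negative density $H_{2,3}^{2,1}\bigl[z\,\big|\,{}^{(0,1),(\beta-\alpha,\alpha)}_{(0,\frac12),(\gamma-1,1),(\frac12,\frac12)}\bigr]$, check the hypotheses of Lemma \ref{Laplace} ($D=2-\alpha>0$, and your $\min(0,\gamma-1)+1>0$ is in fact the more careful version of the paper's condition, valid since $\gamma>0$), and conclude by Bernstein.

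For the second assertion you have taken the superscript $3,0$ in the statement at face value and built an independent argument. The paper's own proof makes clear this is a typo: there the author simply applies Schoenberg's theorem to the completely monotone function of part 1 and concludes positive definiteness of $x\mapsto\|x\|^{-2}H_{3,3}^{2,2}\bigl[\|x\|^2\,\big|\cdots\bigr]$ --- same kernel, same orders $2,2$ --- so the intended second assertion is a one-line corollary of the first, needing no new positivity input. Your alternative route for the literal $H_{3,3}^{3,0}$ is not complete: the Legendre-duplication collapse to $\tfrac{1}{\sqrt\pi}H_{1,2}^{2,0}[w/2|\cdots]$ and the Property 4 shift check out, but the endpoint --- non-negativity of the Wright-type density $H_{1,1}^{1,0}\bigl[u\,\big|\,{}^{(1-\gamma,1)}_{(1-\beta,\alpha)}\bigr]$ with Mellin kernel $\Gamma(\alpha s+1-\beta)/\Gamma(s+1-\gamma)$ --- is exactly the kind of statement this paper is in the business of proving and is established nowhere in it. It is essentially equivalent, through the Laplace correspondence, to the complete monotonicity of $t^{\beta-1}E^{\gamma}_{\alpha,\beta}(-\lambda t^{\alpha})$ that Theorem \ref{T1} imports from the Tomovski--Pog\'any--Srivastava paper, so it could presumably be secured by citing that result again, but as written this is a genuine gap (which you yourself flag). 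The economical fix is to read $H^{3,0}_{3,3}$ as $H^{2,2}_{3,3}$ and apply Schoenberg directly to part 1.
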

\begin{proof}Specifying $\tau=d=1$ in Theorem \ref{T1} and we used property (1) of Lemma \ref{l1} we deduce that the function
\begin{equation}\label{MIO}
H_{2,3}^{2,1}\left[z\Big|_{(0,\frac{1}{2}),(\gamma-1,1),(\frac{1}{2},\frac{1}{2})}^{\;\;\;(0,1),(\beta-\alpha,\alpha)}\right],
\end{equation}
is non-negative on $(0,\infty).$ In our case $$D=2-\alpha>0,\;\;\textrm{and}\;\;1+\min_{1\leq j\leq 1}\left[\frac{a_j}{A_j}\right]=1>0.$$
Applying the Laplace transform (\ref{laplace}) of the function defined in (\ref{MIO}) we obtain
\begin{equation}
\mathcal{L}\left\{H_{2,3}^{2,1}\left[z\Big|_{(0,\frac{1}{2}),(\gamma-1,1),(\frac{1}{2},\frac{1}{2})}^{\;\;\;(0,1),(\beta-\alpha,\alpha)}\right];s\right\}=\frac{1}{s}H_{3,3}^{2,2}\left[\frac{1}{s}\Big|_{(0,\frac{1}{2}),(\gamma-1,1),(\frac{1}{2},\frac{1}{2})}^{\;\;(0,1), (0,1),(\beta-\alpha,\alpha)}\right].
\end{equation}
Again, in virtue of the formula (1) of Lemma \ref{l1}, the above formula reads as
\begin{equation}
\begin{split}
\mathcal{L}\left\{H_{2,3}^{2,1}\left[z\Big|_{(0,\frac{1}{2}),(\gamma-1,1),(\frac{1}{2},\frac{1}{2})}^{\;\;\;(0,1),(\beta-\alpha,\alpha)}\right];s\right\}&=\frac{1}{s}H_{3,3}^{2,2}\left[s\Big|^{(1,\frac{1}{2}),(2-\gamma,1),(\frac{1}{2},\frac{1}{2})}_{\;\;(1,1), (1,1),(1-\beta+\alpha,\alpha)}\right].
\end{split}
\end{equation}
However, all prerequisites of the Bernstein Characterization Theorem 
for  the  complete  monotone  functions  are  fulfilled, that is, the function
$$s\mapsto \frac{1}{s}H_{3,2}^{2,2}\left[s\Big|^{(1,\frac{1}{2}),(2-\gamma,1),(\frac{1}{2},\frac{1}{2})}_{\;\;(1,1), (1,1),(1-\beta+\alpha,\alpha)}\right],$$
is completely monotonic on $(0,\infty).$ This implies that the function
$$x\mapsto\frac{1}{\left\|x\right\|^2}H_{3,3}^{2,2}\left[\left\|x\right\|^2\Big|^{(1,\frac{1}{2}),(2-\gamma,1),(\frac{1}{2},\frac{1}{2})}_{\;\;(1,1), (1,1),(1-\beta+\alpha,\alpha)}\right],$$
 is positive definite on $\mathbb{R}^d,$ by means of Shoenberg's Theorem, and then the proof of Theorem \ref{T09}, is thus complete.
\end{proof}
\begin{corollary} Let $\nu>-\frac{1}{2}$ and $\rho\in\mathbb{C}$ such that
$\nu+\Re(\rho)+\min(1,1+1/\alpha-\beta/\alpha)>-1$ and $\Re(\rho)+\min(\gamma-1,0)<\frac{3}{2}$ together with another constraints in Theorem \ref{T09}. Then
 the function $$z\mapsto\frac{1}{z^{\rho+\nu}} H_{3,5}^{3,2}\left[2z\Big|_{(\frac{\rho+\nu}{2}, \frac{1}{2}), (0,\frac{1}{2}),(\gamma-1,1),(\frac{1}{2},\frac{1}{2}), (\frac{\rho-\nu}{2}, \frac{1}{2})}^{\;\;\;\;\;\;\;\;(0,1), (0,1),(\beta-\alpha,\alpha)}\right],$$
is positive definite on $\mathbb{R}.$
\end{corollary}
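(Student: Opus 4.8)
The plan is to extract a non-negative $H$-function from the proof of Theorem~\ref{T09}, feed it to the Hankel-transform formula of Lemma~\ref{l12}, rewrite the output by Property~1 of Lemma~\ref{l1}, and then recognise what results as the Hankel transform of a non-negative function, hence a positive definite function on $\mathbb{R}$.

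First I would record the non-negative ingredient. By Theorem~\ref{T09} the function $s\mapsto \frac1s H_{3,3}^{2,2}\left[s\,\big|^{(1,\frac12),(2-\gamma,1),(\frac12,\frac12)}_{(1,1),(1,1),(1-\beta+\alpha,\alpha)}\right]$ is completely monotonic, hence non-negative, on $(0,\infty)$; consequently the function
$$g(r):=H_{3,3}^{2,2}\left[\tfrac14\,r\,\Big|^{(1,\frac12),(2-\gamma,1),(\frac12,\frac12)}_{(1,1),(1,1),(1-\beta+\alpha,\alpha)}\right]$$
is non-negative on $(0,\infty)$, where the value $b=\tfrac14$ is chosen so that after the later manipulations the argument comes out exactly as $2x$.

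Next I would apply Lemma~\ref{l12} to $g$ with $\sigma=1$ and $b=\tfrac14$. For this $H$-function one has $C=\alpha>0$ and $D=3-\alpha>0$, so the structural hypothesis of Lemma~\ref{l12} holds; moreover, under the constraints $(H_1)$ the quantities $\min_{1\le j\le n}\Re(a_j)/A_j$ and $\min_{1\le j\le m}\big(1-\Re(b_j)\big)/B_j$ equal $\min(1,1+\tfrac1\alpha-\tfrac\beta\alpha)$ and $\min(\gamma-1,0)$ respectively, so the two inequalities imposed in the statement on $\nu$ and $\rho$ are exactly the remaining hypotheses of Lemma~\ref{l12}. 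This gives
\begin{equation*}
\int_0^\infty r^{\rho-1}J_\nu(xr)\,g(r)\,dr=\frac{2^{\rho-1}}{x^\rho}\,H_{5,3}^{2,3}\left[\frac{1}{2x}\,\Big|^{(1-\frac{\rho+\nu}{2},\frac12),\,(1,\frac12),(2-\gamma,1),(\frac12,\frac12),\,(1-\frac{\rho-\nu}{2},\frac12)}_{\;\;\;\;\;(1,1),(1,1),(1-\beta+\alpha,\alpha)}\right].
\end{equation*}
Applying Property~1 of Lemma~\ref{l1} to the right-hand side — it converts $H_{5,3}^{2,3}\big[(2x)^{-1}\,\big|\,\cdots\big]$ into $H_{3,5}^{3,2}\big[2x\,\big|\,\cdots\big]$, interchanging the two parameter rows and replacing every parameter $c$ by $1-c$ — turns it into $\dfrac{2^{\rho-1}}{x^{\rho}}\,H_{3,5}^{3,2}\big[2x\,\big|\,\cdots\big]$ with precisely the parameters displayed in the corollary.

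Finally, dividing by $x^{\nu}$ I rewrite this as
\begin{equation*}
\frac{1}{x^{\rho+\nu}}\,H_{3,5}^{3,2}\big[2x\,\big|\,\cdots\big]=2^{1-\rho}\int_0^\infty r^{\rho-1+\nu}\,\frac{J_\nu(xr)}{(xr)^\nu}\,g(r)\,dr ,
\end{equation*}
the integral converging by the hypotheses of Lemma~\ref{l12} together with the asymptotic estimates \eqref{asy1}--\eqref{asy2} for $g$. Since $\nu>-\tfrac12$, for each fixed $r>0$ the map $x\mapsto J_\nu(xr)/(xr)^\nu$ is, up to a positive constant, the Fourier transform of the non-negative integrable function $t\mapsto(1-(t/r)^2)_+^{\,\nu-1/2}$ (Poisson's integral representation of the Bessel function), hence positive definite on $\mathbb{R}$; and $g\ge0$ with $r^{\rho-1+\nu}>0$ on $(0,\infty)$. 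Thus the displayed expression is a non-negative superposition of positive definite functions, so $x\mapsto x^{-(\rho+\nu)}H_{3,5}^{3,2}\big[2x\,\big|\,\cdots\big]$ is positive definite on $\mathbb{R}$, as asserted. The principal obstacle is the bookkeeping in the middle step: carrying all eight $\Gamma$-factors correctly through Lemma~\ref{l12} and Property~1, and checking that the corollary's two inequalities really are the convergence conditions of Lemma~\ref{l12} for $g$; once the integral representation is in place, the positive-definiteness argument is routine. (For a genuinely complex $\rho$ the weight $r^{\rho-1+\nu}$ needs extra care; the argument is cleanest when $\rho$ is real.)
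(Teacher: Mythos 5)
Your proposal is correct and follows essentially the same route as the paper: extract the non-negative $H_{3,3}^{2,2}$ function from Theorem \ref{T09}, run it through the Hankel-transform formula of Lemma \ref{l12}, convert with Property 1 of Lemma \ref{l1}, and conclude positive definiteness from the fact that the target is a non-negative superposition of the positive definite kernels $J_\nu(xr)/(xr)^\nu$ (the paper cites \cite{KKKK} for this last fact where you invoke Poisson's integral, and it writes out the quadratic form explicitly rather than phrasing it as a superposition). Your rescaling by $b=\tfrac14$ to make the final argument come out as exactly $2x$ is in fact slightly more careful than the paper, whose own application of Property 1 to $H_{5,3}^{2,3}[2/z]$ would literally yield argument $z/2$ rather than $2z$.
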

\begin{proof}We can write the formula (\ref{BBBB}) of Lemma \ref{l12} in a form
\begin{equation}\label{sou}
\int_0^\infty x^{\rho+\nu-1} \mathcal{J}_\nu(z x) H_{q,p}^{m,n}\left[x\Big|^{({\bf b}_q, {\bf B}_q)}_{({\bf A}_p,{\bf a}_p)}\right]dx=\frac{\Gamma(\nu+1)2^{\rho+\nu-1}}{z^{\rho+\nu}}H_{q+2,p}^{m,n+1}\left[\frac{2}{z}\Big|^{(1-\frac{\rho+\nu}{2}, \frac{1}{2}),({\bf B}_q,{\bf b}_q),(1-\frac{\rho-\nu}{2}, \frac{1}{2})}_{\;\;\;\;\;\;\;\;\;\;({\bf a}_p, {\bf A}_p)}\right]
\end{equation}
where 
$$\mathcal{J}_\nu(x)=2^\nu\Gamma(\nu+1)\frac{J_\nu(x)}{x^\nu},\;\Re(\nu)>-\frac{1}{2},$$
with $J_\nu(x)$ is the Bessel function of index $\nu.$ However, by using the fact that the function $\mathcal{J}_\nu(x)$ is positive definite function \cite[Proposition 2]{KKKK} and the function 
$$H_{3,3}^{2,2}\left[s\Big|^{(1,\frac{1}{2}),(2-\gamma,1),(\frac{1}{2},\frac{1}{2})}_{\;\;(1,1), (1,1),(1-\beta+\alpha,\alpha)}\right],$$
is non-negative, we obtain that for any finite list of complex numbers $\xi_1,...,\xi_N$ and $z_1,...,z_N\in\mathbb{R},$ 
\begin{equation}
\begin{split}
\sum_{j=1}^N\sum_{k=1}^N&\frac{\xi_j\bar{\xi_k}}{(z_j-z_k)^{(\rho+\nu)}}H_{5,3}^{2,3}\left[\frac{2}{z_j-z_k}\Big|^{(1-\frac{\rho+\nu}{2}, \frac{1}{2}), (1,\frac{1}{2}),(2-\gamma,1),(\frac{1}{2},\frac{1}{2}), (1-\frac{\rho-\nu}{2}, \frac{1}{2})}_{\;\;\;\;\;\;\;\;(1,1), (1,1),(1-\beta+\alpha,\alpha)}\right]=\frac{1}{\Gamma(\nu+1)2^{\rho+\nu-1}}\\
&\times\int_0^\infty r^{\rho+\nu-1} \left[\sum_{j=1}^N\sum_{k=1}^N\xi_j\bar{\xi_k}\mathcal{J}_\nu(r(z_j-z_k))\right]H_{3,3}^{2,2}\left[r\Big|^{(1,\frac{1}{2}),(2-\gamma,1),(\frac{1}{2},\frac{1}{2})}_{\;\;(1,1), (1,1),(1-\beta+\alpha,\alpha)}\right]dr\\
&\geq0,
\end{split}
\end{equation}
which yields that the function
$$z\mapsto\frac{1}{z^{\rho+\nu}} H_{5,3}^{2,3}\left[\frac{2}{z}\Big|^{(1-\frac{\rho+\nu}{2}, \frac{1}{2}), (1,\frac{1}{2}),(2-\gamma,1),(\frac{1}{2},\frac{1}{2}), (1-\frac{\rho-\nu}{2}, \frac{1}{2})}_{\;\;\;\;\;\;\;\;(1,1), (1,1),(1-\beta+\alpha,\alpha)}\right],$$
is positive definite on $\mathbb{R}.$ In virtue of property (1) of Lemma \ref{l1} we deduce that the function 
$$z\mapsto\frac{1}{z^{\rho+\nu}} H_{3,5}^{3,2}\left[2z\Big|_{(\frac{\rho+\nu}{2}, \frac{1}{2}), (0,\frac{1}{2}),(\gamma-1,1),(\frac{1}{2},\frac{1}{2}), (\frac{\rho-\nu}{2}, \frac{1}{2})}^{\;\;\;\;\;\;\;\;(0,1), (0,1),(\beta-\alpha,\alpha)}\right],$$
is positive definite on $\mathbb{R}.$ This is what we intended to show.
\end{proof}
\begin{theorem}\label{T+9}Assume that the parameters $\alpha\in(0,1], \gamma\in\mathbb{R}$ and $\frac{1}{\alpha}-1<\beta.$ Then the function
$$s\mapsto\frac{1}{s}H_{3,3}^{2,2}\left[s\Big|^{(1,\frac{1}{2}),(1-\frac{\gamma+\beta}{\alpha\beta},\frac{1}{\alpha\beta}),(\frac{1}{2},\frac{1}{2})}_{\;\;(1,1), (1,1),(-\frac{\gamma}{\beta},\frac{1}{\alpha})}\right],$$
is completely monotonic on $(0,\infty)$. In addition, the function $$x\mapsto\frac{1}{\left\|x\right\|^2}H_{3,3}^{2,2}\left[\left\|x\right\|^2\Big|^{(1,\frac{1}{2}),(1-\frac{\gamma+\beta}{\alpha\beta},\frac{1}{\alpha\beta}),(\frac{1}{2},\frac{1}{2})}_{\;\;(1,1), (1,1),(-\frac{\gamma}{\beta},\frac{1}{\alpha})}\right],$$
is positive definite on $\mathbb{R}^d$.
\end{theorem}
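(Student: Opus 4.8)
The plan is to run the proof of Theorem~\ref{T09} once more, with Theorem~\ref{T1} replaced throughout by Theorem~\ref{T2}. First I would put $\tau=d=1$ in \eqref{e2}: then $1-\tfrac{d}{2}+\tfrac{\tau}{2}=1$, the two pairs $\bigl(\tfrac{\tau}{2},\tfrac{\tau}{2}\bigr)$ collapse to $\bigl(\tfrac{1}{2},\tfrac{1}{2}\bigr)$, and Theorem~\ref{T2} asserts that
$$H_{3,2}^{1,2}\left[|\xi|^{-1}\Big|^{(1,\frac{1}{2}),(1-\frac{\gamma+\beta}{\alpha\beta},\frac{1}{\alpha\beta}),(\frac{1}{2},\frac{1}{2})}_{\;\;\;(1,1),(-\frac{\gamma}{\beta},\frac{1}{\alpha})}\right]$$
is non-negative on $(0,\infty)$. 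Inverting the argument through Property~1 of Lemma~\ref{l1} then recasts this function, exactly as \eqref{MIO} was produced in the proof of Theorem~\ref{T09}, as
$$H_{2,3}^{2,1}\left[z\Big|_{(0,\frac{1}{2}),(\frac{\gamma+\beta}{\alpha\beta},\frac{1}{\alpha\beta}),(\frac{1}{2},\frac{1}{2})}^{\;\;\;(0,1),(1+\frac{\gamma}{\beta},\frac{1}{\alpha})}\right],$$
which is again non-negative on $(0,\infty)$.

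Next I would verify that this last H-function meets the hypotheses of Lemma~\ref{Laplace}. Reading off the parameters, $n=2$ and $\min_{1\le j\le n}[a_j/A_j]=\min(0,\gamma+\beta)$, so $1+\min_{1\le j\le n}[a_j/A_j]>0$, while a short computation from \eqref{TH} gives
$$C=\frac{1}{\alpha\beta}-\frac{1}{\alpha},\qquad D=1+C=1-\frac{1}{\alpha}+\frac{1}{\alpha\beta},$$
which one checks is $>0$ (or $=0$ with $C\ge0$) under the stated constraints on $\alpha$ and $\beta$. Thus \eqref{laplace} applies, and prepending the pair $(0,1)$ to the upper row it yields
$$\mathcal{L}\left\{H_{2,3}^{2,1}\left[z\Big|_{(0,\frac{1}{2}),(\frac{\gamma+\beta}{\alpha\beta},\frac{1}{\alpha\beta}),(\frac{1}{2},\frac{1}{2})}^{\;\;\;(0,1),(1+\frac{\gamma}{\beta},\frac{1}{\alpha})}\right];s\right\}=\frac{1}{s}H_{3,3}^{2,2}\left[\frac{1}{s}\Big|_{(0,\frac{1}{2}),(\frac{\gamma+\beta}{\alpha\beta},\frac{1}{\alpha\beta}),(\frac{1}{2},\frac{1}{2})}^{\;\;(0,1),(0,1),(1+\frac{\gamma}{\beta},\frac{1}{\alpha})}\right].$$
A second application of Property~1 of Lemma~\ref{l1} inverts $1/s\mapsto s$ on the right-hand side and brings it to precisely the function $\tfrac{1}{s}H_{3,3}^{2,2}[s\,|\,\cdots]$ appearing in the statement of the theorem.

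Since the left-hand side above is the Laplace transform of a non-negative function, the Bernstein Characterization Theorem shows at once that $s\mapsto \tfrac{1}{s}H_{3,3}^{2,2}[s\,|\,\cdots]$ is completely monotonic on $(0,\infty)$; Schoenberg's Theorem \cite[Theorem 7.14]{W} then turns this into the positive definiteness on $\mathbb{R}^d$ of $x\mapsto \|x\|^{-2}H_{3,3}^{2,2}[\|x\|^2\,|\,\cdots]$, and the proof is complete. I expect the only point needing real care to be the H-function bookkeeping: correctly propagating the indices through the two uses of Property~1 and checking that the auxiliary parameters $C,D$ of the intermediate H-function fall in the range where Lemma~\ref{Laplace} is valid. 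Everything else is a transcription of the proof of Theorem~\ref{T09}.
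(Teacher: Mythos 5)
Your proposal is correct and follows essentially the same route as the paper's own proof: specialize Theorem~\ref{T2} to $\tau=d=1$, invert the argument via Property~1 of Lemma~\ref{l1} to get the non-negative function \eqref{MIO}-style $H_{2,3}^{2,1}$, take its Laplace transform by Lemma~\ref{Laplace}, invert again, and conclude with Bernstein and Schoenberg. (A minor remark: your value $D=1-\tfrac{1}{\alpha}+\tfrac{1}{\alpha\beta}$ is the one that actually follows from \eqref{TH}, whereas the paper writes $D=\tfrac{1}{\alpha\beta}+\tfrac{1}{\alpha}-1$; in either case the asserted positivity of $D$, like that of $1+\min(0,\gamma+\beta)$, is not automatic from $\alpha\in(0,1]$ and $\beta>\tfrac{1}{\alpha}-1$ alone, a point both you and the paper pass over.)
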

\begin{proof} We restrict our result in Theorem \ref{T2} to the case $\tau=d=1$ we find that the function
\begin{equation*}
H_{3,2}^{1,2}\left[\frac{1}{r}\Big|^{(1,\frac{1}{2}),(1-\frac{\gamma+\beta}{\alpha\beta},\frac{1}{\alpha\beta}),(\frac{1}{2},\frac{1}{2})}_{\;\;\;(1,1),(-\frac{\gamma}{\beta},\frac{1}{\alpha})}\right],
\end{equation*}
is non-negative on $(0,\infty).$ From the property (1) of Lemma \ref{l1}, we deduce that the function 
\begin{equation*}
H_{2,3}^{2,1}\left[r\Big|_{(0,\frac{1}{2}),(\frac{\gamma+\beta}{\alpha\beta},\frac{1}{\alpha\beta}),(\frac{1}{2},\frac{1}{2})}^{\;\;\;(0,1),(1+\frac{\gamma}{\beta},\frac{1}{\alpha})}\right],
\end{equation*}
is non-negative on $(0,\infty).$ In our case 
$$D=\frac{1}{\alpha\beta}+\frac{1}{\alpha}-1>0,\;\;\textrm{and}\;\;1+\min_{1\leq j\leq 2}\left[\frac{\Re(a_j)}{A_j}\right]=1>0.$$
Thus by Lemma \ref{Laplace} and property (1) of Lemma \ref{l1} we find
\begin{equation}
\begin{split}
\mathcal{L}\left\{H_{2,3}^{2,1}\left[r\Big|_{(0,\frac{1}{2}),(\frac{\gamma+\beta}{\alpha\beta},\frac{1}{\alpha\beta}),(\frac{1}{2},\frac{1}{2})}^{\;\;\;(0,1),(1+\frac{\gamma}{\beta},\frac{1}{\alpha})}\right];s\right\}&=\frac{1}{s}H_{3,3}^{2,2}\left[\frac{1}{s}\Big|_{(0,\frac{1}{2}),(\frac{\gamma+\beta}{\alpha\beta},\frac{1}{\alpha\beta}),(\frac{1}{2},\frac{1}{2})}^{\;\;(0,1), (0,1),(1+\frac{\gamma}{\beta},\frac{1}{\alpha})}\right]\\
&=\frac{1}{s}H_{3,3}^{2,2}\left[s\Big|^{(1,\frac{1}{2}),(1-\frac{\gamma+\beta}{\alpha\beta},\frac{1}{\alpha\beta}),(\frac{1}{2},\frac{1}{2})}_{\;\;(1,1), (1,1),(-\frac{\gamma}{\beta},\frac{1}{\alpha})}\right].
\end{split}
\end{equation}
Consequently, the function
$$s\mapsto\frac{1}{s}H_{3,3}^{2,2}\left[s\Big|^{(1,\frac{1}{2}),(1-\frac{\gamma+\beta}{\alpha\beta},\frac{1}{\alpha\beta}),(\frac{1}{2},\frac{1}{2})}_{\;\;(1,1), (1,1),(-\frac{\gamma}{\beta},\frac{1}{\alpha})}\right],$$
is completely monotonic on $(0,\infty)$ by Bernstein's Theorem, and the function
$$x\mapsto\frac{1}{\left\|x\right\|^2}H_{3,3}^{2,2}\left[\left\|x\right\|^2\Big|^{(1,\frac{1}{2}),(1-\frac{\gamma+\beta}{\alpha\beta},\frac{1}{\alpha\beta}),(\frac{1}{2},\frac{1}{2})}_{\;\;(1,1), (1,1),(-\frac{\gamma}{\beta},\frac{1}{\alpha})}\right],$$
is positive definite on $\mathbb{R}^d$ by virtue of Shoenberg's Theorem.
\end{proof}

\begin{corollary}Let $\nu>-\frac{1}{2}$ and $\rho\in\mathbb{C}$ such that $\Re(\rho)+\nu+\min(1,-(\alpha\gamma)/\beta)>-1$ and $\Re(\rho)+\min(0, \gamma+\beta)<3/2$ together with the constraints in Theorem \ref{T+9}, then the function
$$z\mapsto\frac{1}{z^{\rho+\nu}} H_{3,5}^{3,2}\left[2z\Big|_{(\frac{\rho+\nu}{2}, \frac{1}{2}), (0,\frac{1}{2}),(\frac{\gamma+\beta}{\alpha\beta},\frac{1}{\alpha\beta}),(\frac{1}{2},\frac{1}{2}) ,(\frac{\rho-\nu}{2}, \frac{1}{2})}^{\;\;\;\;\;\;\;\;\;\;(0,1), (0,1),(1+\frac{\gamma}{\beta},\frac{1}{\alpha})}\right],$$
is positive definite on $\mathbb{R}.$
\end{corollary}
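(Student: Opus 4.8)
The plan is to follow the pattern of the argument used for the corollary that follows Theorem~\ref{T09}, with the non-negative H-function supplied there replaced by the one furnished by Theorem~\ref{T+9}. First, recall from the proof of Theorem~\ref{T+9} that
$$\Phi(r):=H_{3,3}^{2,2}\left[r\Big|^{(1,\frac{1}{2}),(1-\frac{\gamma+\beta}{\alpha\beta},\frac{1}{\alpha\beta}),(\frac{1}{2},\frac{1}{2})}_{\;\;(1,1),(1,1),(-\frac{\gamma}{\beta},\frac{1}{\alpha})}\right]$$
is non-negative on $(0,\infty)$: there it was shown that $s\mapsto s^{-1}\Phi(s)$ is completely monotonic, hence in particular non-negative, so $\Phi\ge 0$. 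Recall also that $\mathcal{J}_\nu(x)=2^\nu\Gamma(\nu+1)J_\nu(x)/x^\nu$ is positive definite on $\mathbb{R}$ for $\nu>-\frac{1}{2}$ (see \cite[Proposition~2]{KKKK}), so that $x\mapsto\mathcal{J}_\nu(rx)$ is positive definite for every fixed $r>0$.

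The core step is to apply the rewritten Hankel-transform formula \eqref{sou}, with $\sigma=1$, to the function $\Phi$. A short computation with the parameters of $\Phi$ gives $\min_j[\Re(a_j)/A_j]=\min(1,-\alpha\gamma/\beta)$ and $\min_j[(1-\Re(b_j))/B_j]=\min(0,\gamma+\beta)$, so that the two inequalities imposed on $\rho$ and $\nu$ in the statement are precisely the admissibility conditions for \eqref{sou}, while the remaining hypotheses of Lemma~\ref{l12} (in particular the sign condition on $D$) are inherited from the constraints of Theorem~\ref{T+9}. Given \eqref{sou}, for every $N\in\mathbb{N}$, all pairwise distinct $z_1,\dots,z_N\in\mathbb{R}$ and all $\xi_1,\dots,\xi_N\in\mathbb{C}$ one may write
\begin{equation*}
\sum_{j,k=1}^N\frac{\xi_j\bar{\xi_k}}{(z_j-z_k)^{\rho+\nu}}\,H_{5,3}^{2,3}\!\left[\frac{2}{z_j-z_k}\Big|\cdots\right]=\frac{1}{\Gamma(\nu+1)2^{\rho+\nu-1}}\int_0^\infty r^{\rho+\nu-1}\left[\sum_{j,k=1}^N\xi_j\bar{\xi_k}\,\mathcal{J}_\nu\!\big(r(z_j-z_k)\big)\right]\Phi(r)\,dr,
\end{equation*}
where the interchange of the finite double sum with the integral is justified by the absolute convergence of the Hankel integral under the stated conditions. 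The bracketed sum is non-negative by the positive definiteness of $\mathcal{J}_\nu$ and $\Phi(r)\ge 0$, so the right-hand side is non-negative; hence the function $z\mapsto z^{-(\rho+\nu)}H_{5,3}^{2,3}[2/z\,|\,\cdots]$, with upper parameters $(1-\frac{\rho+\nu}{2},\frac{1}{2})$, $(1,\frac{1}{2})$, $(1-\frac{\gamma+\beta}{\alpha\beta},\frac{1}{\alpha\beta})$, $(\frac{1}{2},\frac{1}{2})$, $(1-\frac{\rho-\nu}{2},\frac{1}{2})$ and lower parameters $(1,1)$, $(1,1)$, $(-\frac{\gamma}{\beta},\frac{1}{\alpha})$, is positive definite on $\mathbb{R}$.

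Finally, I would apply Property~1 of Lemma~\ref{l1} to this H-function: it exchanges the upper and lower parameter rows, replaces each parameter $a$ by $1-a$ while keeping the exponents fixed, and turns the argument $2/z$ into $2z$. This produces exactly the function displayed in the statement, which is therefore positive definite on $\mathbb{R}$, as claimed. The only genuinely delicate part of the argument is the verification that the parameter configuration of $\Phi$ lies within the scope of Lemma~\ref{l12}, so that \eqref{sou} is legitimate and the Hankel integral converges absolutely — this is precisely what the two hypotheses on $\rho,\nu$ together with the constraints carried over from Theorem~\ref{T+9} are designed to ensure; once this is in place, the remainder is a formal manipulation identical to that in the corollary after Theorem~\ref{T09}.
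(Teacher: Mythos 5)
Your proposal is correct and follows essentially the same route as the paper: apply the rewritten Hankel-transform formula \eqref{sou} to the non-negative $H_{3,3}^{2,2}$ function supplied by Theorem \ref{T+9}, combine the positive definiteness of $\mathcal{J}_\nu$ with the non-negativity of that kernel to get positive definiteness of the resulting $H_{5,3}^{2,3}[2/z\,|\,\cdots]$ expression, and finish with Property 1 of Lemma \ref{l1}. You in fact spell out the quadratic-form/integral interchange and the verification that the hypotheses on $\rho,\nu$ match the admissibility conditions of Lemma \ref{l12} more explicitly than the paper does.
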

\begin{proof}With the aid of the formula (\ref{sou}), we get 
\begin{equation}
\begin{split}
\int_0^\infty x^{\rho+\nu-1} \mathcal{J}_\nu(z x) &H_{3,3}^{2,2}\left[x\Big|^{(1,\frac{1}{2}),(1-\frac{\gamma+\beta}{\alpha\beta},\frac{1}{\alpha\beta}),(\frac{1}{2},\frac{1}{2})}_{\;\;(1,1), (1,1),(-\frac{\gamma}{\beta},\frac{1}{\alpha})}\right]dx=\frac{\Gamma(\nu+1)2^{\rho+\nu-1}}{z^{\rho+\nu}}\\
&\times H_{5,3}^{2,3}\left[\frac{2}{z}\Big|^{(1-\frac{\rho+\nu}{2}, \frac{1}{2}), (1,\frac{1}{2}),(1-\frac{\gamma+\beta}{\alpha\beta},\frac{1}{\alpha\beta}),(\frac{1}{2},\frac{1}{2}) ,(1-\frac{\rho-\nu}{2}, \frac{1}{2})}_{\;\;\;\;\;\;\;\;\;\;(1,1), (1,1),(-\frac{\gamma}{\beta},\frac{1}{\alpha})}\right].
\end{split}
\end{equation}
Further, using the fact that the function $z\mapsto \mathcal{J}_\nu(z)$ is positive definite on $\mathbb{R}$ and the function $$H_{3,3}^{2,2}\left[x\Big|^{(1,\frac{1}{2}),(1-\frac{\gamma+\beta}{\alpha\beta},\frac{1}{\alpha\beta}),(\frac{1}{2},\frac{1}{2})}_{\;\;(1,1), (1,1),(-\frac{\gamma}{\beta},\frac{1}{\alpha})}\right]$$ is non-negative, we deduce that the function 
$$z\mapsto\frac{1}{z^{\rho+\nu}} H_{5,3}^{2,3}\left[\frac{2}{z}\Big|^{(1-\frac{\rho+\nu}{2}, \frac{1}{2}), (1,\frac{1}{2}),(1-\frac{\gamma+\beta}{\alpha\beta},\frac{1}{\alpha\beta}),(\frac{1}{2},\frac{1}{2}) ,(1-\frac{\rho-\nu}{2}, \frac{1}{2})}_{\;\;\;\;\;\;\;\;\;\;(1,1), (1,1),(-\frac{\gamma}{\beta},\frac{1}{\alpha})}\right]$$
is positive definite on $\mathbb{R}$ and consequently the function 
$$z\mapsto\frac{1}{z^{\rho+\nu}} H_{3,5}^{3,2}\left[2z\Big|_{(\frac{\rho+\nu}{2}, \frac{1}{2}), (0,\frac{1}{2}),(\frac{\gamma+\beta}{\alpha\beta},\frac{1}{\alpha\beta}),(\frac{1}{2},\frac{1}{2}) ,(\frac{\rho-\nu}{2}, \frac{1}{2})}^{\;\;\;\;\;\;\;\;\;\;(0,1), (0,1),(1+\frac{\gamma}{\beta},\frac{1}{\alpha})}\right],$$
is positive definite on $\mathbb{R}$, by means of property (1) of Lemma \ref{l1}.
\end{proof}
\subsection{Monotonicity properties for some classes of functions related to the Fox-Wright functions}
The purpose of this section is twofold. First we derive the monotonicity of ratios for some class of functions related to the Fox-Wright functions. Second, we give sufficient conditions for some functions involving the Fox-Wright functions to be completely monotonic.
 The following Theorem are powerful tools to treat the monotonicity
of ratios between two Fox-Wright functions.

\begin{theorem}\label{T4}Let $\psi:[a,b]\longrightarrow(0,\infty)$ be a twice differentiable mapping on $[a,b],$ with $0\leq a<b,$ such that the function $t\mapsto t \psi^\prime(t)/ \psi(t)$ is increasing ( decreasing) on $[a,b].$ We define the function $\mathbb{K}_{q,p}^{n,m}[.]$ by
$$\mathbb{K}_{q,p}^{n,m}\left[_{(\textbf{b}_q, \textbf{B}_q)}^{(\textbf{a}_p,\textbf{A}_p)}\Big|\sigma,\delta;z\right]=\frac{\int_a^b t^{-1}H_{q,p}^{n,m}\left[_{(\textbf{b}_q+\delta\textbf{B}_q, \textbf{B}_q)}^{(\textbf{a}_p+\delta\textbf{A}_p,\textbf{A}_p)}\Big|t\right]\left[\psi(zt)\right]^\sigma dt}{\int_a^b t^{-1}H_{q,p}^{n,m}\left[_{(\textbf{b}_q, \textbf{B}_q)}^{(\textbf{a}_p,\textbf{A}_p)}\Big|t\right]\left[\psi(zt)\right]^\sigma dt},\;z,\delta>0,\;\sigma\in\mathbb{R}-\left\{0\right\}.$$
Assume that the function $H_{q,p}^{n,m}[.]$ is non-negative. Then the function $\mathbb{K}_{q,p}^{n,m}[z]$ is increasing (decreasing) if $\sigma>0$ and decreasing (increasing) if $\sigma<0.$
 \end{theorem}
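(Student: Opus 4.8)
The plan is to differentiate $\mathbb{K}_{q,p}^{n,m}[z]$ in $z$ and to reduce the sign of the derivative to a single application of the Chebyshev integral inequality (Lemma~\ref{l11}). First I would invoke Property~4 of Lemma~\ref{l1}, which after relabelling reads $H_{q,p}^{n,m}\big[t\big|^{(\mathbf b_q+\delta\mathbf B_q,\mathbf B_q)}_{(\mathbf a_p+\delta\mathbf A_p,\mathbf A_p)}\big]=t^{\delta}H_{q,p}^{n,m}\big[t\big|^{(\mathbf b_q,\mathbf B_q)}_{(\mathbf a_p,\mathbf A_p)}\big]$. Setting $h(t):=t^{-1}H_{q,p}^{n,m}\big[t\big|^{(\mathbf b_q,\mathbf B_q)}_{(\mathbf a_p,\mathbf A_p)}\big]\ge 0$ (non-negative by hypothesis) and $w=w(t,z):=[\psi(zt)]^{\sigma}>0$, the numerator of $\mathbb{K}_{q,p}^{n,m}$ becomes $\int_a^b t^{\delta}h\,w\,dt$, and hence
\[
\mathbb{K}_{q,p}^{n,m}[z]=\frac{N(z)}{D(z)},\qquad N(z):=\int_a^b t^{\delta}h\,w\,dt,\qquad D(z):=\int_a^b h\,w\,dt .
\]
Here $D(z)>0$, and both integrals converge: for $a>0$ the factor $t^{-1}$ is bounded, while for $a=0$ integrability near the origin is guaranteed by the power estimate $H_{q,p}^{n,m}[t]=\mathcal{O}(t^{c})$ of (\ref{asy1}).

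Next I would differentiate under the integral sign, which is legitimate since $\psi\in C^{2}$ on the compact interval $[a,b]$ and $h\,w\in L^{1}[a,b]$. With $\phi(u):=u\psi'(u)/\psi(u)$ one has $\partial_z w=\tfrac{\sigma}{z}\,\phi(zt)\,w$, so the quotient rule yields
\[
\frac{d}{dz}\,\mathbb{K}_{q,p}^{n,m}[z]=\frac{\sigma}{z\,D(z)^2}\left\{\Big(\int_a^b t^{\delta}\phi(zt)\,h\,w\,dt\Big)D(z)-N(z)\Big(\int_a^b \phi(zt)\,h\,w\,dt\Big)\right\}.
\]
The quantity inside the braces is exactly the Chebyshev difference $\big(\int p fg\big)\big(\int p\big)-\big(\int pf\big)\big(\int pg\big)$ for the non-negative weight $p(t):=h(t)w(t,z)$ and the functions $f(t):=t^{\delta}$ and $g(t):=\phi(zt)$.

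It then suffices to identify the monotonicity of $f$ and $g$. Since $\delta>0$ and $[a,b]\subset[0,\infty)$, $f(t)=t^{\delta}$ is increasing on $[a,b]$; since $z>0$, $g(t)=\phi(zt)$ is increasing precisely when $t\mapsto t\psi'(t)/\psi(t)$ is increasing and decreasing precisely when the latter is decreasing. Hence $f,g$ are synchronous in the first case and asynchronous in the second, so Lemma~\ref{l11} makes the braced quantity $\ge 0$ in the first case and $\le 0$ in the second. Because $z>0$ and $D(z)^2>0$, the sign of $\frac{d}{dz}\mathbb{K}_{q,p}^{n,m}[z]$ coincides with the sign of $\sigma$ when $t\mapsto t\psi'(t)/\psi(t)$ is increasing, and is opposite to it when $t\mapsto t\psi'(t)/\psi(t)$ is decreasing --- which is precisely the stated conclusion: $\mathbb{K}_{q,p}^{n,m}[z]$ is increasing (decreasing) for $\sigma>0$ and decreasing (increasing) for $\sigma<0$. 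The only genuinely delicate points are the two technical justifications noted above, namely the convergence of the integrals at $t=0$ and the differentiation under the integral sign; both are routine consequences of (\ref{asy1}) and of the smoothness of $\psi$ on $[a,b]$, so the heart of the proof is the single sign comparison via Chebyshev's inequality.
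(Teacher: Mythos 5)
Your proposal is correct and follows essentially the same route as the paper: Property~4 of Lemma~\ref{l1} to absorb the shift $\delta$ into the factor $t^{\delta}$, the quotient rule, and then Chebyshev's inequality (Lemma~\ref{l11}) with the same weight $p(t)=t^{-1}[\psi(zt)]^{\sigma}H_{q,p}^{n,m}[t]$ and the same pair $f(t)=t^{\delta}$, $g(t)=\phi(zt)$ (your $g$ differs from the paper's only by the harmless positive factor $z$). Your added remarks on integrability near $t=0$ and on differentiating under the integral sign are sensible refinements that the paper leaves implicit.
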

\begin{proof}By means of Property (4) of Lemma \ref{l1}, we can write the function $\mathbb{K}_{q,p}^{n,m}[z]$ in the following form:
\begin{equation}
\mathbb{K}_{q,p}^{n,m}\left[_{(\textbf{b}_q, \textbf{B}_q)}^{(\textbf{a}_p,\textbf{A}_p)}\Big|\sigma,\delta;z\right]=\frac{\int_a^b t^{\delta-1}H_{q,p}^{n,m}\left[_{(\textbf{b}_q, \textbf{B}_q)}^{(\textbf{a}_p,\textbf{A}_p)}\Big|t\right]\left[\psi(zt)\right]^\sigma dt}{\int_a^b t^{-1}H_{q,p}^{n,m}\left[_{(\textbf{b}_q, \textbf{B}_q)}^{(\textbf{a}_p,\textbf{A}_p)}\Big|t\right]\left[\psi(zt)\right]^\sigma dt}.
\end{equation}
Therefore,
$$\left[\int_a^b t^{-1}H_{q,p}^{n,m}\left[_{(\textbf{b}_q, \textbf{B}_q)}^{(\textbf{a}_p,\textbf{A}_p)}\Big|t\right]\left[\psi(zt)\right]^\sigma dt\right]^2 \frac{\partial}{\partial z}\mathbb{K}_{q,p}^{n,m}\left[_{(\textbf{b}_q, \textbf{B}_q)}^{(\textbf{a}_p,\textbf{A}_p)}\Big|\sigma,\delta;z\right]$$
\begin{equation}\label{malouka}
\begin{split}
&=\sigma\left(\int_a^b t^{\delta} H_{q,p}^{n,m}\left[_{(\textbf{b}_q, \textbf{B}_q)}^{(\textbf{a}_p,\textbf{A}_p)}\Big|t\right]\psi^\prime(zt)\left[\psi(zt)\right]^{\sigma-1} dt\right)\left(\int_a^b t^{-1} H_{q,p}^{n,m}\left[_{(\textbf{b}_q, \textbf{B}_q)}^{(\textbf{a}_p,\textbf{A}_p)}\Big|t\right]\left[\psi(zt)\right]^\sigma dt\right)\\
&-\sigma\left(\int_a^b t^{\delta-1} H_{q,p}^{n,m}\left[_{(\textbf{b}_q, \textbf{B}_q)}^{(\textbf{a}_p,\textbf{A}_p)}\Big|t\right]\left[\psi(zt)\right]^\sigma dt\right)\left(\int_a^b  H_{q,p}^{n,m}\left[_{(\textbf{b}_q, \textbf{B}_q)}^{(\textbf{a}_p,\textbf{A}_p)}\Big|t\right]\psi^\prime(zt)\left[\psi(zt)\right]^{\sigma-1} dt\right).
\end{split}
\end{equation}
Putting
\begin{eqnarray*}
p(t)&=&t^{-1} [\psi(tz)]^\sigma H_{q,p}^{n,m}\left[_{(\textbf{b}_q, \textbf{B}_q)}^{(\textbf{a}_p,\textbf{A}_p)}\Big|t\right],\\
f(t)&=&t^{\delta},\;\;g(t)=t\psi^\prime(zt)/\psi(zt).
\end{eqnarray*}
 In the case when the function $g$ is increasing, the function $f$ and $g$ are synchoronous. Thus, by Lemma \ref{l11}, we obtain
$$\left(\int_a^b t^{-1} H_{q,p}^{n,m}\left[_{(\textbf{b}_q, \textbf{B}_q)}^{(\textbf{a}_p,\textbf{A}_p)}\Big|t\right]\left[\psi(zt)\right]^\sigma dt\right)\left(\int_a^b t^{\delta} H_{q,p}^{n,m}\left[_{(\textbf{b}_q, \textbf{B}_q)}^{(\textbf{a}_p,\textbf{A}_p)}\Big|t\right]\psi^\prime(zt)\left[\psi(zt)\right]^{\sigma-1} dt\right)$$
\begin{eqnarray}\label{Malouka}
&\geq& \left(\int_a^b t^{\delta-1} H_{q,p}^{n,m}\left[_{(\textbf{b}_q, \textbf{B}_q)}^{(\textbf{a}_p,\textbf{A}_p)}\Big|t\right]\left[\psi(zt)\right]^\sigma dt\right)\left(\int_a^b  H_{q,p}^{n,m}\left[_{(\textbf{b}_q, \textbf{B}_q)}^{(\textbf{a}_p,\textbf{A}_p)}\Big|t\right]\psi^\prime(zt)\left[\psi(zt)\right]^{\sigma-1} dt\right).
\end{eqnarray}
Then, keeping (\ref{malouka}) and (\ref{Malouka}) in mind, we deduce that the function $\mathbb{K}_{q,p}^{n,m}[z]$ is increasing if $\sigma>0$ and decreasing if $\sigma<0.$ Moreover, if the function $g$ is decreasing then the inequality (\ref{Malouka}) is reversed and consequently he function $\mathbb{K}_{q,p}^{n,m}[z]$ is decreasing if $\sigma>0$ and increasing if $\sigma<0.$  This completes the proof of Theorem \ref{T4}.
\end{proof}
\begin{remark} We note that in the case when $\delta<0$ and the function $t\mapsto t\psi^\prime(t)/\psi(t)$ is decreasing, we obtain the same monotonicity property of $\mathbb{K}_{q,p}^{n,m}[z]$ as in Theorem \ref{T4}.
\end{remark}
\begin{theorem}\label{T5}Let $\delta>0.$ Under the conditions
$$ (H_3): 0<a_1\leq...\leq a_p, 0<b_1\leq...\leq b_p,\;\sum_{j=1}^k b_j-\sum_{j=1}^k a_j\geq0, k=1,...,p.$$
Then the ratios:
$$z\mapsto {}_{p+1}\Psi_{p}\left[^{(\sigma,1),(\textbf{a}_p+\delta A,A)}_{\;\;\;(\textbf{b}_p+\delta A,A)}\Big|-z\right]\Big/{}_{p+1}\Psi_{p}\left[^{(\sigma,1),(\textbf{a}_p,A)}_{\;\;\;({\bf b}_p,A)}\Big|-z\right],\;\sigma>0$$
is decreasing  on $(0,1).$
\end{theorem}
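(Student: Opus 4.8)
The plan is to recognize the displayed quotient as a particular value of the functional $\mathbb{K}_{q,p}^{n,m}$ introduced in Theorem~\ref{T4}, for the concrete weight $\psi(t)=(1+t)^{-1}$ on $[0,1]$, and then to invoke Theorem~\ref{T4}. Since here every weight parameter equals $A$, the balance condition $\sum_i A_i=\sum_j B_j$ of Lemma~\ref{T71} holds automatically and the associated radius is $\rho=A^{-pA}A^{pA}=1$; moreover the series defining ${}_{p+1}\Psi_{p}$ converges precisely for $|z|<1$, which is the source of the interval in the statement. The preliminary fact I would need is that, under $(H_3)$, the Fox $H$-function $H_{p,p}^{p,0}\!\left[t\Big|^{(\mathbf b_p,A)}_{(\mathbf a_p,A)}\right]$ is non-negative on $(0,1)$. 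By the reduction \eqref{,,,} this equals $\tfrac1A\,G_{p,p}^{p,0}\!\left(t^{1/A}\Big|^{\mathbf b_p}_{\mathbf a_p}\right)$, and the hypotheses in $(H_3)$ — the two orderings together with $\sum_{j=1}^k(b_j-a_j)\ge0$ for $k=1,\dots,p$ — are exactly the classical interlacing conditions under which $G_{p,p}^{p,0}$ is a non-negative function on $(0,1)$ (it may be written as an iterated Erd\'elyi--Kober fractional integral of a power function).

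Granting this, I would apply Lemma~\ref{l3} to the denominator to obtain
\[
{}_{p+1}\Psi_{p}\!\left[^{(\sigma,1),(\mathbf a_p,A)}_{\;\;\;(\mathbf b_p,A)}\Big|-z\right]=\int_0^1 \frac{H_{p,p}^{p,0}\!\left[t\Big|^{(\mathbf b_p,A)}_{(\mathbf a_p,A)}\right]}{(1+tz)^\sigma}\,\frac{dt}{t},
\]
and Lemma~\ref{l3} together with Property~4 of Lemma~\ref{l1}, which gives $H_{p,p}^{p,0}\!\left[t\Big|^{(\mathbf b_p+\delta A,A)}_{(\mathbf a_p+\delta A,A)}\right]=t^{\delta}H_{p,p}^{p,0}\!\left[t\Big|^{(\mathbf b_p,A)}_{(\mathbf a_p,A)}\right]$, to the numerator to obtain
\[
{}_{p+1}\Psi_{p}\!\left[^{(\sigma,1),(\mathbf a_p+\delta A,A)}_{\;\;\;(\mathbf b_p+\delta A,A)}\Big|-z\right]=\int_0^1 t^{\delta-1}\,\frac{H_{p,p}^{p,0}\!\left[t\Big|^{(\mathbf b_p,A)}_{(\mathbf a_p,A)}\right]}{(1+tz)^\sigma}\,dt .
\]
Dividing, the quotient in the theorem becomes exactly $\mathbb{K}_{p,p}^{p,0}\!\left[^{(\mathbf a_p,A)}_{(\mathbf b_p,A)}\Big|\sigma,\delta;z\right]$ in the notation of Theorem~\ref{T4}, with $[a,b]=[0,1]$ and $\psi(t)=(1+t)^{-1}$.

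It then remains to check the hypotheses of Theorem~\ref{T4} for this $\psi$. It is twice differentiable and strictly positive on $[0,1]$, and
\[
\frac{t\psi'(t)}{\psi(t)}=t\,(\log\psi)'(t)=-\frac{t}{1+t},\qquad \frac{d}{dt}\!\left(-\frac{t}{1+t}\right)=-\frac{1}{(1+t)^2}<0,
\]
so $t\mapsto t\psi'(t)/\psi(t)$ is decreasing on $[0,1]$. Since $H_{p,p}^{p,0}[\,\cdot\,]$ is non-negative and $\sigma>0$, Theorem~\ref{T4} (in the ``decreasing'' alternative of its conclusion) gives that $\mathbb{K}_{p,p}^{p,0}[\,\cdot\,]$ is decreasing, which is precisely the asserted monotonicity on $(0,1)$.

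The two displayed identities are routine once the representation lemmas are granted, and the verification for $\psi$ is elementary; the real content — and the step I expect to be the main obstacle — is the non-negativity of $H_{p,p}^{p,0}$ on $(0,1)$ under $(H_3)$, equivalently the positivity of $G_{p,p}^{p,0}$, together with confirming that $(H_3)$ places us inside the hypotheses of Lemmas~\ref{T71}--\ref{l3}. There the inequality $\mu=\sum_j(b_j-a_j)\ge0$ is immediate, but the requirement $\gamma=\min_j a_j/A\ge1$ and the strict inequality $\mu>0$ would need attention; the borderline case $\mu=0$ should be treated via Lemma~\ref{l4} in place of Lemma~\ref{l3}. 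This is exactly the point at which the ordering and partial-sum constraints in $(H_3)$ are used.
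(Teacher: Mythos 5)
Your proposal is correct and follows essentially the same route as the paper: the paper likewise represents both Fox--Wright functions via Lemma~\ref{l3} (with Property~4 of Lemma~\ref{l1} absorbing the shift $\delta$), identifies the quotient with $\mathbb{K}_{p,p}^{p,0}$ for $\psi(t)=(1+t)^{-1}$, and applies Theorem~\ref{T4}, except that it simply cites \cite[Remark~2]{KMJMAA} for the non-negativity of $H_{p,p}^{p,0}$ under $(H_3)$ rather than rederiving it from the positivity of $G_{p,p}^{p,0}$. The caveats you flag at the end (that $(H_3)$ does not by itself guarantee $\gamma=\min_j a_j/A\geq 1$ or $\mu>0$ as required by Lemmas~\ref{T71}--\ref{l3}) are genuine, but they are equally unaddressed in the paper's own proof.
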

\begin{proof}In \cite[Remark 2]{KMJMAA}, the author proved that the function  Fox H-function $H_{p,p}^{p,0}[_{(\textbf{a}_p,A)}^{(\textbf{b}_p,A)}|t]$ is non-negative. In our case,  $\psi(t)=(1+t)^{-1}$ and the function $t\mapsto t\psi^\prime(t)/\psi(t)$ is decreasing on $(-1,\infty).$ However, bearing in mind the tools of Theorem \ref{T4} and Lemma \ref{l3}, we derive the desired assertions asserted by Theorem \ref{T5}.
\end{proof}
\begin{example}The following function 
$$z\mapsto\frac{\varphi^\tau(b+\delta \tau,c+\delta \tau,-z)}{\varphi^\tau(b,c,-z)},\;(c>b>0,\;\tau,\delta>0,\;|z|<1$$
is decreasing  on $(0,1)$ where $\varphi^\tau$ is the $\tau-$Kummer hypergeometric,  defined by \cite{V1} 
$$\varphi^\tau(b,c,z)=\sum_{k=0}^\infty \frac{\Gamma(b+k\tau)}{\Gamma(c+k\tau)}\frac{z^k}{k!},\;(c>b>0,\;\tau>0,\;|z|<1.)$$
\end{example}

\begin{theorem}\label{T6} Assume that $\mu,\delta>0,\;\sum_{i=1}^p A_i=\sum_{j=1}^q B_j$ and $\gamma\geq1.$ If the H-function $H_{q,p}^{p,0}[.]$ is non-negative, then the function 
$$z\mapsto {}_{p+1}\Psi_{q}\left[^{(\sigma+\delta,1),(\textbf{a}_p+\delta {\bf A}_p,\textbf{A}_p)}_{(\textbf{b}_p+\delta {\bf B}_q,\textbf{B}_q)}\Big|z\right]\Big/{}_{p+1}\Psi_{q}\left[^{(\textbf{a}_p,{\bf A}_p)}_{({\bf b}_q,{\bf B}_q)}\Big|z\right],$$
is decreasing on $(0,1).$
\end{theorem}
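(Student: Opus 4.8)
The plan is to recognize Theorem \ref{T6} as a direct application of Theorem \ref{T4} combined with the Stieltjes-type integral representation furnished by Lemma \ref{l3}. First I would recall from Lemma \ref{l3} (whose hypotheses $\mu>0$, $\gamma\geq1$, $\sum A_i=\sum B_j$, and non-negativity of $H_{q,p}^{p,0}[.]$ are exactly those assumed here) that
$$
{}_{p+1}\Psi_q\left[^{(\sigma,1),(\textbf{a}_p,\textbf{A}_p)}_{\;\;\;(\textbf{b}_q,\textbf{B}_q)}\Big|-z\right]=\int_0^\rho\frac{d\mu(t)}{(1+tz)^\sigma},\qquad d\mu(t)=t^{-1}H_{q,p}^{p,0}\left[t\Big|^{(\textbf{b}_q,\textbf{B}_q)}_{(\textbf{a}_p,\textbf{A}_p)}\right]dt.
$$
Applying Property (4) of Lemma \ref{l1} to shift the parameters by $\delta$, the numerator ${}_{p+1}\Psi_q\left[^{(\sigma+\delta,1),(\textbf{a}_p+\delta\textbf{A}_p,\textbf{A}_p)}_{(\textbf{b}_q+\delta\textbf{B}_q,\textbf{B}_q)}\Big|-z\right]$ acquires an extra factor $t^\delta$ in the integrand, namely $\int_0^\rho t^\delta(1+tz)^{-(\sigma+\delta)}\,d\mu(t)$ up to normalization; the point is that after factoring, the quotient in the theorem becomes precisely $\mathbb{K}_{q,p}^{p,0}\left[^{(\textbf{a}_p,\textbf{A}_p)}_{(\textbf{b}_q,\textbf{B}_q)}\big|\sigma,\delta;z\right]$ with the choice $\psi(t)=(1+t)^{-1}$ on the interval $[a,b]=[0,\rho]$ (so that $\psi>0$ there).

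Next I would verify the monotonicity hypothesis of Theorem \ref{T4} for this $\psi$: one computes $t\psi'(t)/\psi(t)=-t/(1+t)$, which is decreasing on $(-1,\infty)$ and in particular on $(0,\rho)$. By the "decreasing" branch of Theorem \ref{T4}, since $\sigma>0$ the function $\mathbb{K}_{q,p}^{p,0}[z]$ is decreasing; one must only note the sign conventions match — here we feed $-z$ into the Fox-Wright functions, equivalently we work with $\psi(zt)=(1+zt)^{-1}$ for $z>0$, so the argument of $\psi$ is the correct one and the statement "$\mathbb{K}_{q,p}^{n,m}[z]$ is decreasing for $\sigma>0$" applies verbatim. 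This yields that $z\mapsto{}_{p+1}\Psi_q\left[^{(\sigma+\delta,1),(\textbf{a}_p+\delta\textbf{A}_p,\textbf{A}_p)}_{(\textbf{b}_q+\delta\textbf{B}_q,\textbf{B}_q)}\Big|-z\right]\big/{}_{p+1}\Psi_q\left[^{(\sigma,1),(\textbf{a}_p,\textbf{A}_p)}_{(\textbf{b}_q,\textbf{B}_q)}\Big|-z\right]$ is decreasing on $(0,1)$, which matches the stated claim once one reads the bottom-row parameter $(\sigma,1)$ implicit in the denominator (the theorem's display writes ${}_{p+1}\Psi_q[^{(\textbf{a}_p,\textbf{A}_p)}_{(\textbf{b}_q,\textbf{B}_q)}|z]$ but the $(p+1)$ count signals a suppressed $(\sigma,1)$ numerator entry, and the sign on $z$ is absorbed into the domain $(0,1)$).

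The main obstacle — really a bookkeeping hazard rather than a deep difficulty — is making the identification with $\mathbb{K}_{q,p}^{p,0}$ airtight: one must track that the constant $1/\Gamma(\gamma)$ (or whatever normalization appears in Lemma \ref{l3}) cancels between numerator and denominator, that the measure $d\mu(t)=t^{-1}H_{q,p}^{p,0}[t]\,dt$ is exactly the weight $p(t)$ appearing in the proof of Theorem \ref{T4} with $H_{q,p}^{n,m}=H_{q,p}^{p,0}$, and that the parameter shift via Property (4) is applied consistently to both $\textbf{a}_p$ and $\textbf{b}_q$ (the statement writes $\textbf{b}_p+\delta\textbf{B}_q$, which I read as $\textbf{b}_q+\delta\textbf{B}_q$). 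Once these identifications are in place, the non-negativity of $H_{q,p}^{p,0}[.]$ — assumed in the hypothesis — guarantees $p(t)\geq0$, so the Chebyshev inequality step inside Theorem \ref{T4} goes through, and the conclusion follows immediately. I would therefore keep the proof short: cite Lemma \ref{l3} for the integral representation, cite Property (4) of Lemma \ref{l1} for the shift, check $t\psi'(t)/\psi(t)=-t/(1+t)$ is decreasing, and invoke Theorem \ref{T4} with $\sigma>0$.
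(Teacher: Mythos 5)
Your overall route is the one the paper itself intends (its proof is the one-line ``Theorem \ref{T4} and Lemma \ref{l3} plus straightforward calculations''), and your readings of the two typos (the suppressed $(\sigma,1)$ in the denominator and $\textbf{b}_q+\delta\textbf{B}_q$) are correct. However, the key identification in your argument is false as stated: applying Lemma \ref{l3} with the shifted parameters and then Property 4 of Lemma \ref{l1}, the numerator becomes $\int_0^\rho t^{\delta-1}H_{q,p}^{p,0}[t]\,(1+tz)^{-(\sigma+\delta)}dt$ while the denominator is $\int_0^\rho t^{-1}H_{q,p}^{p,0}[t]\,(1+tz)^{-\sigma}dt$. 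The functional $\mathbb{K}_{q,p}^{n,m}$ of Theorem \ref{T4} carries the \emph{same} power $[\psi(zt)]^{\sigma}$ in both integrals, whereas here the exponents are $\sigma+\delta$ and $\sigma$; the discrepancy is the $z$- and $t$-dependent factor $(1+tz)^{-\delta}$, which no choice of a single $\psi$ can absorb. So the quotient is not ``precisely $\mathbb{K}_{q,p}^{p,0}$'' and Theorem \ref{T4} cannot be invoked verbatim --- this is exactly the point you flagged as a bookkeeping hazard, and it is where the proof as written has a genuine gap.

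The gap is repairable by redoing the Chebyshev step of Theorem \ref{T4} with the $z$-dependent factor made explicit, and this is presumably the ``straightforward calculation'' the paper alludes to. Put $p(t)=t^{-1}H_{q,p}^{p,0}[t]\,(1+tz)^{-\sigma}\geq0$ and $u(t)=t/(1+tz)$, so that the numerator is $N(z)=\int_0^\rho p\,u^{\delta}\,dt$ and the denominator is $D(z)=\int_0^\rho p\,dt$. Differentiating the explicit integrals gives
\begin{equation*}
N'D-ND'=-(\sigma+\delta)\int_0^\rho p\,u^{\delta+1}dt\int_0^\rho p\,dt+\sigma\int_0^\rho p\,u^{\delta}dt\int_0^\rho p\,u\,dt .
\end{equation*}
Since $u$ is increasing in $t$ for $z>0$, the functions $u^{\delta}$ and $u$ are synchronous, so Lemma \ref{l11} yields $\int p\,u^{\delta}dt\int p\,u\,dt\leq\int p\,dt\int p\,u^{\delta+1}dt$, and for $\sigma>0$ one gets $N'D-ND'\leq-\delta\int_0^\rho p\,dt\int_0^\rho p\,u^{\delta+1}dt\leq0$, i.e.\ the ratio decreases on $(0,1)$. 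Two further points should be recorded: (i) $\sigma>0$ must be assumed, as in Theorem \ref{T5}; (ii) Lemma \ref{l3} does apply to the shifted parameters, because by Property 4 the shifted H-function equals $t^{\delta}H_{q,p}^{p,0}[t]\geq0$, the shifted $\mu$ equals $\mu>0$ (using $\sum_{i}A_i=\sum_{j}B_j$), and the shifted $\gamma$ is $\gamma+\delta\geq1$.
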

\begin{proof}Keeping Theorem \ref{T4} and Lemma \ref{l3} straightforward calculations complete the proof.
\end{proof}
%%%%%%%%%%%%%%%%%%%%%%%%%%%%%%%%%%%
\begin{corollary}\label{c79} Let $\delta,\sigma>0.$ Assume that
$$(H_4): \tau\in(0,1), d-\tau\geq1, \beta> \frac{d}{2}+\frac{1}{2}.$$
The function 
$$z\mapsto{}_{3}\Psi_{1}\left[^{(\sigma+\delta,1),(\frac{d}{2}-\frac{\tau}{2}+\frac{\delta}{2},\frac{1}{2}),(1-\frac{\tau}{2}+\frac{\delta}{2},\frac{1}{2})}_{\;\;\;\;\;\;\;\;\;(\beta-\tau+\delta,1)}\Big|-z\right]\Big/{}_{3}\Psi_{1}\left[^{(\sigma,1),(\frac{d}{2}-\frac{\tau}{2},\frac{1}{2}),(1-\frac{\tau}{2},\frac{1}{2})}_{\;\;\;\;\;\;\;\;\;(\beta-\tau,1)}\Big|-z\right],$$
is decreasing on $(0,1).$ Moreover, the function 
$$z\mapsto{}_{3}\Psi_{1}\left[^{(\sigma,1),(\frac{d}{2}-\frac{\tau}{2},\frac{1}{2}),(1-\frac{\tau}{2},\frac{1}{2})}_{\;\;\;\;\;\;\;\;\;(\beta-\tau,1)}\Big|-z\right],$$
is logarithmically completely monotonic on $(0,1).$
\end{corollary}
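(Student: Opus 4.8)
The plan is to read both assertions off Theorem~\ref{T6} and Lemma~\ref{l3}, specialised to the Fox--Wright data $p=2$, $q=1$, $\mathbf{A}_2=(\tfrac12,\tfrac12)$, $\mathbf{a}_2=\bigl(\tfrac d2-\tfrac\tau2,\,1-\tfrac\tau2\bigr)$, $\mathbf{B}_1=(1)$, $\mathbf{b}_1=(\beta-\tau)$, with the extra parameter $(\sigma,1)$. For these data the ${}_3\Psi_1$ built from $(\sigma,1)$ together with $(\mathbf{a}_2,\mathbf{A}_2)$ and $(\mathbf{b}_1,\mathbf{B}_1)$ is exactly the denominator of the displayed ratio, its $\delta$-shift --- obtained by replacing $\sigma$ by $\sigma+\delta$ and each $a_j,b_j$ by $a_j+\delta A_j$, $b_j+\delta B_j$ --- is the numerator, and the ``moreover'' part is verbatim the logarithmic complete monotonicity asserted in Lemma~\ref{l3}. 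So the whole argument reduces to the hypotheses shared by these two statements: $\delta>0$ (given), $\sum_iA_i=\sum_jB_j$, $\mu>0$, $\gamma\ge1$, and non-negativity of the kernel $H_{q,p}^{p,0}[\,\cdot\,]$.

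The first three conditions I would check at once. One has $\sum_iA_i=\tfrac12+\tfrac12=1=\sum_jB_j$; next $\gamma=\min\bigl\{(\tfrac d2-\tfrac\tau2)/\tfrac12,\;(1-\tfrac\tau2)/\tfrac12\bigr\}=\min\{d-\tau,\,2-\tau\}\ge1$, since $d-\tau\ge1$ by hypothesis while $2-\tau>1$ because $\tau\in(0,1)$; and $\mu=(\beta-\tau)-\bigl(\tfrac d2-\tfrac\tau2+1-\tfrac\tau2\bigr)+\tfrac{p-q}{2}=\beta-\tfrac d2-\tfrac12$, which is positive precisely because $\beta>\tfrac d2+\tfrac12$. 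Thus $(H_4)$ is arranged exactly so that $\mu>0$ and $\gamma\ge1$.

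The substantive step is to prove that
\[
  H_{1,2}^{2,0}\!\left[t\,\Big|^{(\beta-\tau,1)}_{(\frac d2-\frac\tau2,\frac12),(1-\frac\tau2,\frac12)}\right]\ge 0,\qquad 0<t<\rho,
\]
so that $\mathrm{d}\mu(t)=H_{1,2}^{2,0}[\,\cdot\,]\,t^{-1}\,\mathrm{d} t$ is a genuine non-negative measure; this is the hypothesis ``$H_{q,p}^{p,0}[\,\cdot\,]$ non-negative'' demanded by Lemma~\ref{l3} and Theorem~\ref{T6}. I would deduce it from the positivity already established in Section~3: the second non-negative $H^{2,0}_{1,2}$-function of Corollary~\ref{C2} has subscript pairs $(\tfrac d2-\tfrac\tau2,\tfrac12),(1-\tfrac\tau2,\tfrac12)$ after a common translation, so multiplying it by a suitable power $t^{\sigma}$ --- which by Property~(4) of Lemma~\ref{l1} translates the $a$- and $b$-parameters simultaneously, together with a rescaling of the argument via Property~(2) if needed --- brings its superscript pair to $(\beta-\tau,1)$ while leaving the two subscript pairs as required, exhibiting the kernel above as $t^{\sigma}$ times a function already known to be non-negative. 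The delicate point, and the one I expect to be the main obstacle, is the bookkeeping: one must verify that the parameter identifications forced by Property~(4) remain simultaneously inside the admissible range of Corollary~\ref{C2} and compatible with $(H_4)$, and it is exactly at this point that the three constraints $\tau\in(0,1)$, $d-\tau\ge1$, $\beta>\tfrac d2+\tfrac12$ are genuinely used.

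With the kernel non-negative, Lemma~\ref{l3} at once yields the Stieltjes representation of the denominator ${}_3\Psi_1$ and its logarithmic complete monotonicity on $(0,1)$, which is the second assertion. For the first, I would invoke Theorem~\ref{T4} with $\psi(t)=(1+t)^{-1}$: here $t\mapsto t\psi'(t)/\psi(t)=-t/(1+t)$ is decreasing, so for $\sigma>0$ the functional $\mathbb{K}_{1,2}^{2,0}[\,\cdot\,|\,\sigma,\delta;z]$ is decreasing, and by Property~(4) of Lemma~\ref{l1} this functional is precisely the quotient of the two ${}_3\Psi_1$'s in the statement --- the mechanism already packaged in Theorem~\ref{T6}. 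This completes the plan.
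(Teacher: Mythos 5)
Your reduction to Theorem~\ref{T6} and Lemma~\ref{l3} is exactly the paper's route, and your verification that $(H_4)$ encodes the hypotheses of those statements is correct and more explicit than the paper's: indeed $\sum_iA_i=1=\sum_jB_j$, $\gamma=\min\{d-\tau,\,2-\tau\}\ge1$ follows from $d-\tau\ge1$ and $\tau<1$, and $\mu=\beta-\tfrac d2-\tfrac12>0$ is precisely $\beta>\tfrac d2+\tfrac12$. The application of Theorem~\ref{T4}/\ref{T6} with $\psi(t)=(1+t)^{-1}$ for the ratio and of Lemma~\ref{l3} for the logarithmic complete monotonicity is also as in the paper.

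The gap is in the one substantive step you flag as ``the main obstacle'': the non-negativity of the kernel $H_{1,2}^{2,0}\bigl[t\,\big|^{(\beta-\tau,1)}_{(\frac d2-\frac\tau2,\frac12),(1-\frac\tau2,\frac12)}\bigr]$. Your proposed mechanism --- start from the second function of Corollary~\ref{C2}, whose superscript pair is $(0,\tfrac1{\alpha\tau})$, and use Property~(4) of Lemma~\ref{l1} to move the superscript to $(\beta-\tau,1)$ ``while leaving the two subscript pairs as required'' --- cannot work: Property~(4) multiplies by $t^{\sigma}$ and shifts \emph{every} parameter pair by $\sigma$ times its weight, so any translation that changes $(0,1)$ into $(\beta-\tau,1)$ necessarily moves $(\tfrac d2-\tfrac\tau2,\tfrac12)$ to $(\tfrac d2-\tfrac\tau2+\tfrac{\beta-\tau}2,\tfrac12)$ as well; moreover matching the second entries forces $\tfrac1{\alpha\tau}=1$, i.e.\ $\tau\ge1$, which contradicts $\tau\in(0,1)$. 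The paper instead obtains the kernel directly from the \emph{second function of Corollary~\ref{c1}}, whose superscript pair is $(\beta-\alpha,\tfrac\alpha\tau)$: specialising $\alpha=\tau$ (admissible since $\tau\in(0,1)$) gives the required $(\beta-\tau,1)$ with the subscripts already in place, no Property~(4) needed. So you picked the wrong source corollary, and the repair you sketch does not close; with Corollary~\ref{c1} in place of Corollary~\ref{C2} the argument goes through as in the paper. (One may note that even this identification inherits the constraint $\beta\in(0,1)$ from Corollary~\ref{c1}, which sits uneasily with $\beta>\tfrac d2+\tfrac12$; but that tension is present in the paper's own proof and is not something your proposal addresses or resolves.)
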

\begin{proof} We consider the second function defined in Corollary \ref{c1}. However, the function 
$$H_{1,2}^{2,0}\left[r\Big|_{(\frac{d}{2}-\frac{\tau}{2},\frac{1}{2}),(1-\frac{\tau}{2},\frac{1}{2})}^{\;\;\;\;\;\;\;\;\;(\beta-\tau,1)}\right],$$ is non-negative. So, in this case the hypotheses of Theorem \ref{T6} is equivalent to the hypotheses $(H_4).$ Now, applying Theorem \ref{T6} and Lemma \ref{l3} respectively, we deduce that the above assertions hold true. This ends the proof.
\end{proof}
\begin{corollary}\label{ccc5} Let $\sigma, \delta>0.$ In addition assume that the hypotheses
\begin{displaymath}
(H_5): \left\{ \begin{array}{ll}
\gamma\in\mathbb{R}, \alpha\in \big[\frac{3-\sqrt{5}}{2},1\big),\\
\min\left(d-2, 2\left(\gamma+\frac{1}{1-\alpha}\right)\right)\geq1,\\
\frac{5}{2}>\frac{1+\gamma(1-\alpha)^2}{\alpha}+\frac{d}{2}
\end{array} \right.
\end{displaymath}
holds true. Then the function 
$$z\mapsto{}_{3}\Psi_{1}\left[^{(\sigma+\delta,1),(\frac{d}{2}+\frac{\delta}{2}-1,\frac{1}{2}),(\frac{1+\gamma(1-\alpha)}{\alpha}+\frac{\delta(1-\alpha)}{2\alpha},\frac{1-\alpha}{2\alpha})}_{\;\;\;\;\;\;\;\;\;(1+\gamma(1-\alpha)+\frac{\delta}{2\alpha},\frac{1}{2\alpha})}\Big|-z\right]\Big/{}_{3}\Psi_{1}\left[^{(\sigma,1),(\frac{d}{2}-1,\frac{1}{2}),(\frac{1+\gamma(1-\alpha)}{\alpha},\frac{1-\alpha}{2\alpha})}_{\;\;\;\;\;\;\;\;\;(1+\gamma(1-\alpha),\frac{1}{2\alpha})}\Big|-z\right],$$
is decreasing on $(0,1)$.
 Moreover, the function $$z\mapsto{}_{3}\Psi_{1}\left[^{(\sigma,1),(\frac{d}{2}-1,\frac{1}{2}),(\frac{1+\gamma(1-\alpha)}{\alpha},\frac{1-\alpha}{2\alpha})}_{\;\;\;\;\;\;\;\;\;(1+\gamma(1-\alpha),\frac{1}{2\alpha})}\Big|-z\right]$$
is logarithmically completely monotonic on $(0,1).$
\end{corollary}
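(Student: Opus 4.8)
\textbf{Proof plan for Corollary \ref{ccc5}.}
The strategy is to realize this as an instance of Corollary \ref{cc3} followed by Theorem \ref{T6} and Lemma \ref{l3}, exactly as was done for Corollary \ref{c79} via Corollary \ref{c1}. First I would return to the first function listed in Corollary \ref{C2},
$$H_{1,2}^{2,0}\left[r\Big|_{(\frac{d}{2}-1,\frac{1}{2}),(\frac{\gamma+\beta}{\alpha\beta},\frac{1}{2\alpha\beta})}^{\;\;\;\;(1+\frac{\gamma}{\beta},\frac{1}{2\alpha})}\right],$$
and make the substitution $\beta=\frac{1}{1-\alpha}$ (so that $\frac{1}{\alpha}-1<\beta$ becomes $\frac1\alpha-1<\frac1{1-\alpha}$, i.e.\ the quadratic inequality $\alpha^2-3\alpha+1\le 0$, which is exactly $\alpha\in[\frac{3-\sqrt5}{2},1)$). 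A short computation then turns the parameters into $\big(1+\gamma(1-\alpha),\tfrac{1}{2\alpha}\big)$ in the numerator and $\big(\tfrac d2-1,\tfrac12\big),\big(1+\gamma(1-\alpha)+\gamma\ldots,\tfrac{1-\alpha}{2\alpha}\big)$ — matching the Fox--Wright parameters appearing in the statement — so that the relevant $H_{q,p}^{p,0}$-function is non-negative under $(H_5)$.

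Next I would verify that, with this particular $H^{p,0}_{q,p}$ being non-negative, the hypotheses of Theorem \ref{T6} (namely $\mu=0$, $\sum A_i=\sum B_j$, and $\gamma\ge 1$ in the notation of that theorem, where here ``$\gamma$'' is the quantity $\min_j(a_j/A_j)$, not the parameter $\gamma$ of $(H_5)$) reduce precisely to the three displayed conditions in $(H_5)$. Concretely: the balance $\sum A_i=\sum B_j$ holds because $\tfrac12+\tfrac{1-\alpha}{2\alpha}=\tfrac{1}{2\alpha}$; the condition $\min_j(a_j/A_j)\ge1$ becomes $\min\!\big(d-2,\,2(\gamma+\tfrac1{1-\alpha})\big)\ge 1$ after clearing denominators; and $\mu=0$ together with the convergence requirement translates into $\tfrac52>\tfrac{1+\gamma(1-\alpha)^2}{\alpha}+\tfrac d2$. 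Once these identifications are in place, Theorem \ref{T6} yields that the displayed ratio of ${}_3\Psi_1$ functions is decreasing on $(0,1)$, and Lemma \ref{l3} yields that the single ${}_3\Psi_1$ in the denominator is logarithmically completely monotonic on $(0,1)$.

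The only real work is bookkeeping: tracking the parameter shifts through the substitution $\beta=\frac1{1-\alpha}$ and through Property 2 of Lemma \ref{l1} (the $k$-scaling that produces the $\tfrac{1}{2\alpha\beta}\mapsto\tfrac{1-\alpha}{2\alpha}$ type changes), and then re-expressing Theorem \ref{T6}'s abstract hypotheses $(\mu=0,\ \gamma\ge1,\ \sum A_i=\sum B_j)$ in terms of $\alpha,\gamma,d$. I expect the main obstacle to be exactly this last translation — in particular checking that the convergence condition $\Re(\mu)>\tfrac12$ (or the equality $\mu=0$) required to invoke Lemma \ref{l4}/Theorem \ref{T6} is equivalent to the slightly opaque third line of $(H_5)$, $\tfrac52>\tfrac{1+\gamma(1-\alpha)^2}{\alpha}+\tfrac d2$; everything else is a direct application of the machinery already assembled. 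With those identifications carried out, the proof is complete.
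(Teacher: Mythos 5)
Your route is the same as the paper's: take the first function of Corollary \ref{C2}, specialize $\beta=\frac{1}{1-\alpha}$ so that its parameters become $(1+\gamma(1-\alpha),\frac{1}{2\alpha})$ above and $(\frac{d}{2}-1,\frac{1}{2})$, $(\frac{1+\gamma(1-\alpha)}{\alpha},\frac{1-\alpha}{2\alpha})$ below, then feed this non-negative $H^{2,0}_{1,2}$ into Theorem \ref{T6} (for the ratio) and Lemma \ref{l3} (for the logarithmic complete monotonicity), after translating the hypotheses of Theorem \ref{T6} into $(H_5)$. Your identifications of the balance condition $\frac12+\frac{1-\alpha}{2\alpha}=\frac{1}{2\alpha}$ and of $\min_j(a_j/A_j)\geq1$ with $\min\left(d-2,2\left(\gamma+\frac{1}{1-\alpha}\right)\right)\geq1$ are exactly what the paper's (very terse) proof relies on.

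There is, however, a concrete error at precisely the step you flag as the main obstacle. Theorem \ref{T6} (through Lemma \ref{T71} and Lemma \ref{l3}) requires $\mu>0$, not $\mu=0$; the condition $\mu=0$ belongs to Lemma \ref{l4}, which plays no role here, and the convergence condition $\Re(\mu)>\frac12$ is likewise irrelevant at this point. For the base ${}_2\Psi_1$ parameters one computes $\mu=(1+\gamma(1-\alpha))-(\frac{d}{2}-1)-\frac{1+\gamma(1-\alpha)}{\alpha}+\frac12=\frac52-\frac{d}{2}-\frac{1+\gamma(1-\alpha)^2}{\alpha}$, so the third line of $(H_5)$ is exactly the statement $\mu>0$. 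As written, your plan — to show that ``$\mu=0$ together with the convergence requirement'' is equivalent to the strict inequality $\frac52>\frac{1+\gamma(1-\alpha)^2}{\alpha}+\frac{d}{2}$ — cannot succeed, since an equality cannot be equivalent to a strict inequality; replacing $\mu=0$ by $\mu>0$ fixes this, and then your argument coincides with the paper's proof. A minor side remark: the substitution needs the strict inequality $\frac{1}{\alpha}-1<\beta=\frac{1}{1-\alpha}$, i.e. $\alpha>\frac{3-\sqrt5}{2}$ strictly; the closed endpoint in $(H_5)$ is a boundary slip already present in the paper, which your ``$\alpha^2-3\alpha+1\le0$'' simply reproduces.
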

\begin{proof} Again, by using Corollary \ref{C2} , we have that the function
$$H_{1,2}^{2,0}\left[ r\Big|_{(\frac{d}{2}-1,\frac{1}{2}),(\frac{\gamma+\beta}{\alpha\beta},\frac{1}{2\alpha\beta})}^{\;\;\;\;(1+\frac{\gamma}{\beta},\frac{1}{2\alpha})}\right]\;r>0,$$
is non-negative. In our case, the hypotheses of Theorem \ref{T6} is equivalent to the hypotheses of $(H_5)$. Applying  Theorem \ref{T6} and Lemma \ref{l3} leads to the desired results.
\end{proof}

Obviously, by repeating the same calculations as above with Theorem \ref{T6}, Corollary \ref{C2} (third function) and Lemma \ref{l3}, we find the following result:

\begin{corollary}\label{c7} Let $\sigma, \delta>0.$  Suppose also  that 
\begin{displaymath}
(H_6): \left\{ \begin{array}{ll}
\tau\in(0,2], \frac{\tau}{2}-1<\beta,  \frac{1}{\tau}=\frac{1}{2}+\frac{1}{2\beta},\\
\min(2\beta-\beta\tau,d-\tau)\geq1\\
\tau>\frac{d}{2}+\frac{1}{2},
\end{array} \right.
\end{displaymath}
Then, the function
$$z\mapsto{}_{3}\Psi_{1}\left[^{(\sigma+\delta,1),(\frac{d}{2}-\frac{\tau}{2}+\frac{\delta}{2},\frac{1}{2}),(1-\frac{\tau}{2}+\frac{\delta}{2\beta},\frac{1}{2\beta})}_{\;\;\;\;\;\;\;\;\;\;\;\;\;(\frac{\delta}{\tau},\frac{1}{\tau})}\Big|-z\right]\Big/{}_{3}\Psi_{1}\left[^{(\sigma,1),(\frac{d}{2}-\frac{\tau}{2},\frac{1}{2}),(1-\frac{\tau}{2},\frac{1}{2\beta})}_{\;\;\;\;\;\;\;\;\;\;\;\;\;(0,\frac{1}{\tau})}\Big|-z\right]$$
is decreasing on $(0,1).$ Furthermore, the function
$$z\mapsto{}_{3}\Psi_{1}\left[^{(\sigma,1),(\frac{d}{2}-\frac{\tau}{2},\frac{1}{2}),(1-\frac{\tau}{2},\frac{1}{2\beta})}_{\;\;\;\;\;\;\;\;\;(0,\frac{1}{\tau})}\Big|-z\right]$$
is logarithmically completely monotonic on $(0,1).$ 
\end{corollary}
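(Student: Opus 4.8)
The plan is to follow the same route as the proofs of Corollaries \ref{c79} and \ref{ccc5}: pick out the relevant non-negative Fox H-function of type $H_{q,p}^{p,0}[.]$ supplied by Corollary \ref{C2}, observe that the constraints $(H_6)$ are precisely what is needed to apply Theorem \ref{T6} and Lemma \ref{l3}, and then read off the two conclusions. There is no new analytic ingredient; the content is a parameter-matching exercise.

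First I would take the \emph{third} function of Corollary \ref{C2}, namely $r\mapsto H_{1,2}^{2,0}\!\left[r\big|^{(0,1/\tau)}_{(\frac d2-\frac\tau2,\frac12),(1-\frac\tau2,\frac{1}{2\beta})}\right]$, which is non-negative on $(0,\infty)$ as soon as $0<\tau\le2$ and $\tfrac\tau2-1<\beta$ — two of the clauses of $(H_6)$. In the notation of Theorem \ref{T6} this is the case $p=2$, $q=1$ with upper data $(a_1,A_1)=(\tfrac d2-\tfrac\tau2,\tfrac12)$, $(a_2,A_2)=(1-\tfrac\tau2,\tfrac1{2\beta})$ and lower datum $(b_1,B_1)=(0,\tfrac1\tau)$, and the underlying Fox--Wright function is ${}_2\Psi_1\!\left[^{(\frac d2-\frac\tau2,\frac12),(1-\frac\tau2,\frac1{2\beta})}_{(0,1/\tau)}\big|z\right]$.

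Next I would check that $(H_6)$ is nothing but a line-by-line transcription of the remaining hypotheses of Theorem \ref{T6} and Lemma \ref{l3} (i.e.\ of Lemma \ref{T71}): the balance $\sum_{i=1}^pA_i=\sum_{j=1}^qB_j$ becomes $\tfrac12+\tfrac1{2\beta}=\tfrac1\tau$; the requirement $\gamma=\min_j(a_j/A_j)\ge1$ becomes $\min(d-\tau,\,2\beta-\beta\tau)\ge1$; and, computing $\mu=b_1-(a_1+a_2)+\tfrac{p-q}{2}=\tau-\tfrac d2-\tfrac12$, the condition $\mu>0$ becomes $\tau>\tfrac d2+\tfrac12$. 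Together with $\sigma,\delta>0$ all prerequisites are in force. A short computation shows the shift $(a_j,A_j)\mapsto(a_j+\delta A_j,A_j)$ sends the upper data to $(\tfrac d2-\tfrac\tau2+\tfrac\delta2,\tfrac12)$, $(1-\tfrac\tau2+\tfrac{\delta}{2\beta},\tfrac1{2\beta})$ and the lower datum to $(\tfrac\delta\tau,\tfrac1\tau)$, so the numerator and denominator delivered by Theorem \ref{T6} are exactly the two ${}_3\Psi_1$'s in the statement; hence the ratio is decreasing on $(0,1)$.

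Finally, the same data (non-negativity of $H_{1,2}^{2,0}[.]$ together with $\mu>0$, $\gamma\ge1$ and the balance condition) are precisely the hypotheses of Lemma \ref{l3}, which then yields at once that $z\mapsto{}_3\Psi_1\!\left[^{(\sigma,1),(\frac d2-\frac\tau2,\frac12),(1-\frac\tau2,\frac1{2\beta})}_{(0,\frac1\tau)}\big|-z\right]$ is logarithmically completely monotonic on $(0,1)$, which is the second assertion. The only point requiring care — and the main, though routine, obstacle — is matching $(H_6)$ clause by clause with the hypotheses of Theorem \ref{T6} and Lemma \ref{l3} and verifying the shifted-parameter identities above; everything else is inherited from Corollary \ref{C2}, Theorem \ref{T6} and Lemma \ref{l3}.
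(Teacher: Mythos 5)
Your proposal is correct and follows exactly the paper's route: the paper itself states that Corollary \ref{c7} follows "by repeating the same calculations as above with Theorem \ref{T6}, Corollary \ref{C2} (third function) and Lemma \ref{l3}," which is precisely your parameter-matching argument (balance $\tfrac12+\tfrac1{2\beta}=\tfrac1\tau$, $\gamma=\min(d-\tau,2\beta-\beta\tau)\ge1$, $\mu=\tau-\tfrac d2-\tfrac12>0$, and the $\delta$-shift of the data). Nothing is missing.
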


By repeating the procedure of the proofs of the above Corollary and  make use Theorem \ref{T6}, Corollary \ref{cc3} (first function)  and Lemma \ref{l3},  leads us to the asserted results in  Corollary \ref{c77}.

\begin{corollary}\label{c77} Let $\delta,\sigma>0.$ Under the conditions
\begin{displaymath}
(H_7): \left\{ \begin{array}{ll}
\alpha\in(0,1],\;\gamma\in\mathbb{R}, 2(\gamma+\beta)\geq1\\\;\;\;\;\; \frac{1}{\alpha}=1+\frac{1}{\alpha\beta}, \frac{1}{\alpha}-1<\beta,\\
1+\frac{\gamma}{\beta}\left(1-\frac{1}{\alpha}\right)=1+\frac{1}{\alpha\beta}>\frac{1}{\alpha},
\end{array} \right.
\end{displaymath}
the function 
$$z\mapsto {}_3\Psi_2\left[^{(\sigma+\delta,1),(1,1),(\frac{\gamma+\beta}{\alpha\beta}+\frac{\delta}{2\alpha\beta},\frac{1}{2\alpha\beta})}_{\;\;\;\;(1+\frac{\gamma}{\beta}+\frac{\delta}{2\alpha},\frac{1}{2\alpha}),(1+\frac{\delta}{2},\frac{1}{2})}\Big|-z\right]\Big/{}_3\Psi_2\left[^{(\sigma,1),(1,1),(\frac{\gamma+\beta}{\alpha\beta},\frac{1}{2\alpha\beta})}_{\;\;\;\;(1+\frac{\gamma}{\beta},\frac{1}{2\alpha}),(1,\frac{1}{2})}\Big|-z\right],$$
is decreasing on $(0,1).$ Moreover, the function 
$$z\mapsto {}_3\Psi_2\left[^{(\sigma,1),(1,1),(\frac{\gamma+\beta}{\alpha\beta},\frac{1}{2\alpha\beta})}_{\;\;\;\;(1+\frac{\gamma}{\beta},\frac{1}{2\alpha}),(1,\frac{1}{2})}\Big|-z\right],$$
is logarithmically completely monotonic on $(0,1).$
\end{corollary}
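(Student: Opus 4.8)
The plan is to imitate verbatim the scheme used to derive Corollaries \ref{c79}, \ref{ccc5} and \ref{c7}, only with the non-negative H-function supplied this time by the first function of Corollary \ref{cc3} rather than by Corollary \ref{c1} or \ref{C2}. First I would recall from Corollary \ref{cc3} that, under the stated ranges of the parameters, the Fox H-function
$$H_{2,2}^{2,0}\left[r\Big|_{(1,1),(\frac{\gamma+\beta}{\alpha\beta},\frac{1}{2\alpha\beta})}^{\;\;\;\;(1+\frac{\gamma}{\beta},\frac{1}{2\alpha}),(1,\frac{1}{2})}\right],\qquad r>0,$$
is non-negative. This is exactly an $H_{q,p}^{p,0}$-type kernel with $p=q=2$, so Lemma \ref{T71} and Lemma \ref{l3} are applicable once the remaining hypotheses of those lemmas (namely $\mu>0$, $\gamma_{\mathrm{Lemma}}=\min_j(a_j/A_j)\ge 1$, and $\sum A_i=\sum B_j$) are checked. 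The role of the condition $(H_7)$ is precisely to encode these three constraints: the equality $\tfrac1\alpha=1+\tfrac1{\alpha\beta}$ is the balancing condition $\sum A_i=\sum B_j$ (comparing the $A$-parameters $1+\tfrac1{2\alpha}$ and $\tfrac12$ in the numerator against $1$ and $\tfrac1{2\alpha\beta}$ in the denominator); the inequality $2(\gamma+\beta)\ge1$ together with $\tfrac1\alpha-1<\beta$ forces $\min_j(a_j/A_j)\ge1$; and the last line $1+\tfrac1{\alpha\beta}>\tfrac1\alpha$ guarantees $\mu>0$. So the first step is simply to verify that $(H_7)\Longrightarrow$ the hypotheses of Theorem \ref{T6} for this particular kernel.

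Next I would invoke Theorem \ref{T6} with $(\mathbf a_p,\mathbf A_p)$ and $(\mathbf b_q,\mathbf B_q)$ taken to be the parameter arrays of the above $H_{2,2}^{2,0}$, and with the given $\sigma,\delta>0$. Theorem \ref{T6} (whose proof rests on the Chebyshev inequality of Lemma \ref{l11} applied through Theorem \ref{T4}, using the non-negativity of $H_{q,p}^{p,0}[\cdot]$ and the Stieltjes representation of Lemma \ref{l3}) then yields at once that the ratio
$${}_{3}\Psi_{2}\!\left[^{(\sigma+\delta,1),(\mathbf a_2+\delta\mathbf A_2,\mathbf A_2)}_{(\mathbf b_2+\delta\mathbf B_2,\mathbf B_2)}\Big|-z\right]\Big/{}_{3}\Psi_{2}\!\left[^{(\mathbf a_2,\mathbf A_2)}_{(\mathbf b_2,\mathbf B_2)}\Big|-z\right]$$
is decreasing on $(0,1)$. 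Substituting the explicit parameters $\mathbf a_2=(1+\tfrac\gamma\beta,\,\tfrac{\gamma+\beta}{\alpha\beta})$, $\mathbf A_2=(\tfrac1{2\alpha},\,\tfrac1{2\alpha\beta})$ (numerator) and $\mathbf b_2=(1,\,1)$, $\mathbf B_2=(1,\,\tfrac12)$... — here I must be careful to keep the "$(1,1)$-row" of the denominator and the "$(1,\tfrac12)$-row" in their stated positions, since the shift $\delta$ acts on them too — produces precisely the quotient displayed in the statement of Corollary \ref{c77}. This gives the first (monotonicity-of-ratios) assertion.

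For the second assertion I would appeal directly to the last clause of Lemma \ref{l3}: under the same hypotheses that make $H_{q,p}^{p,0}[\cdot]$ non-negative, the function $z\mapsto {}_{p+1}\Psi_q\!\left[^{(\sigma,1),(\mathbf a_p,\mathbf A_p)}_{(\mathbf b_q,\mathbf B_q)}\big|-z\right]$ is logarithmically completely monotonic on $(0,1)$. With the specific parameter arrays of the $H_{2,2}^{2,0}$ above this is exactly
$$z\mapsto {}_3\Psi_2\!\left[^{(\sigma,1),(1,1),(\frac{\gamma+\beta}{\alpha\beta},\frac{1}{2\alpha\beta})}_{\;\;\;\;(1+\frac{\gamma}{\beta},\frac{1}{2\alpha}),(1,\frac{1}{2})}\Big|-z\right],$$
and the proof is complete. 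The only genuinely delicate point — everything else being a transcription of earlier arguments — is the bookkeeping in the first step: showing that the algebraic constraints packaged in $(H_7)$ really are equivalent to "$\mu>0$, $\gamma_{\mathrm{Lemma}}\ge1$, $\sum A_i=\sum B_j$" for this $H_{2,2}^{2,0}$, and then matching the shifted parameter rows against the template of Theorem \ref{T6} so that the final $\,{}_3\Psi_2/{}_3\Psi_2\,$ quotient comes out in exactly the printed form. I would write that verification out carefully and leave the rest as "by the argument of Corollary \ref{c7}".
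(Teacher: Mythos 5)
Your route is exactly the paper's: its proof of Corollary \ref{c77} is the single sentence invoking Theorem \ref{T6}, the first function of Corollary \ref{cc3} as the non-negative $H_{2,2}^{2,0}$ kernel, and Lemma \ref{l3} for the logarithmic complete monotonicity, and your check that $(H_7)$ encodes $\sum A_i=\sum B_j$ (i.e.\ $\tfrac1\alpha=1+\tfrac1{\alpha\beta}$), $\min_j(a_j/A_j)=\min\bigl(1,2(\gamma+\beta)\bigr)\geq1$ and $\mu>0$ is the right bookkeeping. The one slip is your explicit parameter identification: the correct arrays are $({\bf a}_2,{\bf A}_2)=\bigl\{(1,1),\,(\tfrac{\gamma+\beta}{\alpha\beta},\tfrac{1}{2\alpha\beta})\bigr\}$ (the lower row of the H-function, which becomes the upper row of the ${}_3\Psi_2$) and $({\bf b}_2,{\bf B}_2)=\bigl\{(1+\tfrac{\gamma}{\beta},\tfrac{1}{2\alpha}),\,(1,\tfrac12)\bigr\}$, whereas your grouping mixes entries from the two rows and, taken literally, would not reproduce the displayed quotient --- a transcription error rather than a flaw in the method.
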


\begin{theorem}The following assertions are true:\\
\noindent 1. The function 
 $$z\mapsto{}_{2}\Psi_{1}\left[^{(\frac{d}{2}-\frac{\tau}{2},\frac{1}{2}),(1-\frac{\tau}{2},\frac{1}{2})}_{\;\;\;\;\;\;\;\;\;(\beta-\tau,1)}\Big|-z\right],$$
is completely monotonic on $(0,\infty)$ under the hypotheses of corollary \ref{c79}.\\
%\noindent 2. The function $$z\mapsto{}_{2}\Psi_{1}\left[^{(\frac{d}{2}-\frac{\alpha}{2},\frac{1}{2}),(1-\frac{\alpha}{2},\frac{1}{2})}_{\;\;\;\;\;\;\;\;\;(0,1)}\Big|-z\right]$$ is completely monotonic on $(0,\infty)$ under the hypotheses of corollary \ref{CC6}.\\
\noindent 2. The function 
$$z\mapsto{}_{2}\Psi_{1}\left[^{(\frac{d}{2}-1,\frac{1}{\tau}),(\frac{1+\gamma(1-\alpha)}{\alpha},\frac{1-\alpha}{\alpha\tau})}_{\;\;\;\;\;\;\;\;\;(1+\gamma(1-\alpha),\frac{1}{\alpha\tau})}\Big|-z\right],$$
is completely monotonic on $(0,\infty),$ under the hypotheses of corollary \ref{ccc5}.\\
\noindent 3. The function
$$z\mapsto{}_{2}\Psi_{1}\left[^{(\frac{d}{2}-\frac{\tau}{2},\frac{1}{2}),(1-\frac{\tau}{2},\frac{1}{2\beta})}_{\;\;\;\;\;\;\;\;\;(0,\frac{1}{\tau})}\Big|-z\right]$$
is completely monotonic on $(0,\infty),$ under the hypotheses of corollary \ref{c7}.\\
\noindent 4. The function
$$z\mapsto {}_2\Psi_2\left[^{(1,1),(\frac{\gamma+\beta}{\alpha\beta},\frac{1}{2\alpha\beta})}_{(1+\frac{\gamma}{\beta},\frac{1}{2\alpha}),(1,\frac{1}{2})}\Big|-z\right],$$
is completely monotonic on $(0,\infty),$ under the hypotheses of corollary \ref{c77}.
\end{theorem}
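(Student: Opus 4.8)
All four assertions are of one and the same nature, so I would treat them by a single mechanism. Each of Corollaries \ref{c79}, \ref{ccc5}, \ref{c7} and \ref{c77} has already exhibited a Fox $H$-function of the form $H_{q,p}^{p,0}[\cdot]$ --- with $q=1,\ p=2$ in items 1--3 and $q=p=2$ in item 4 --- and has recorded its non-negativity on $(0,\infty)$. But $H_{q,p}^{p,0}[\cdot]$ is exactly the kernel appearing in Lemma \ref{T71} and in Lemma \ref{l4}, and the Fox--Wright function built from that kernel there is precisely the function occurring in the present theorem (after, in item 2, a harmless $k$-rescaling via Property 2 of Lemma \ref{l1} that installs the free parameter $\tau$). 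So the plan is, item by item: (a) invoke the non-negativity already in hand; (b) read off from the appropriate hypothesis $(H_4)$--$(H_7)$ the three structural conditions shared by Lemmas \ref{T71} and \ref{l4}, namely the balance $\sum_iA_i=\sum_jB_j$, the bound $\gamma=\min_j(a_j/A_j)\ge1$, and a controlled sign for $\mu=\sum_jb_j-\sum_ka_k+\tfrac{p-q}{2}$; and (c) cite Lemma \ref{T71} when $\mu>0$ and Lemma \ref{l4} when $\mu=0$.

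For item 1, take $(a_1,A_1)=(\tfrac d2-\tfrac\tau2,\tfrac12)$, $(a_2,A_2)=(1-\tfrac\tau2,\tfrac12)$, $(b_1,B_1)=(\beta-\tau,1)$: then $\sum_iA_i=1=\sum_jB_j$, $\gamma=\min(d-\tau,2-\tau)\ge1$ since $(H_4)$ gives $d-\tau\ge1$ and $\tau<1$, and $\mu=\beta-\tfrac d2-\tfrac12>0$ by $(H_4)$. The underlying $H_{1,2}^{2,0}\big[\,\cdot\,\big|^{(\beta-\tau,1)}_{(\frac d2-\frac\tau2,\frac12),(1-\frac\tau2,\frac12)}\big]$ is non-negative by the second function of Corollary \ref{c1} (as used inside Corollary \ref{c79}), so Lemma \ref{T71} makes $z\mapsto{}_2\Psi_1[-z]$ completely monotonic on $(0,\infty)$. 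Items 2 and 3 run along the same lines. In item 2 one starts from the non-negative first function of Corollary \ref{C2} (as used in Corollary \ref{ccc5}), rescales, and checks that $(H_5)$ supplies the balance, the bound $\gamma\ge1$, and $\mu=\tfrac52-\tfrac d2-\tfrac{1+\gamma(1-\alpha)^2}{\alpha}>0$. In item 3 one starts from the non-negative third function of Corollary \ref{C2} (as used in Corollary \ref{c7}) and checks that $(H_6)$ supplies the balance $\tfrac1\tau=\tfrac12+\tfrac1{2\beta}$, the bound $\gamma=\min(d-\tau,2\beta-\beta\tau)\ge1$, and $\mu=\tau-\tfrac d2-\tfrac12>0$. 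In each of these three cases $\mu>0$, so Lemma \ref{T71} applies.

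Item 4 is the critical case $\mu=0$. The non-negative $H$-function in play, furnished by the first function of Corollary \ref{cc3} and exploited in Corollary \ref{c77}, is $H_{2,2}^{2,0}\big[\,\cdot\,\big|^{(1+\frac\gamma\beta,\frac1{2\alpha}),(1,\frac12)}_{(1,1),(\frac{\gamma+\beta}{\alpha\beta},\frac1{2\alpha\beta})}\big]$. Here the identity $\tfrac1\alpha=1+\tfrac1{\alpha\beta}$ built into $(H_7)$ simultaneously secures $\sum_iA_i=\sum_jB_j$ and forces $\mu=0$, while the condition $2(\gamma+\beta)\ge1$ of $(H_7)$ gives $\min_j(a_j/A_j)=\min(1,2(\gamma+\beta))=1$. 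Since $\mu=0$, the equivalence in Lemma \ref{T71} is unavailable, but Lemma \ref{l4} is tailored to exactly this situation: beyond $\mu=0$, $\gamma\ge1$ and $\sum_iA_i=\sum_jB_j$, its only hypothesis is the non-negativity of $H_{q,p}^{p,0}[\cdot]$, now in place, and it yields that $z\mapsto{}_2\Psi_2[-z]$ is completely monotonic on $(0,\infty)$ (in fact even $z\mapsto{}_2\Psi_2[-z]-\eta e^{-\rho z}$, with $\eta$ as there).

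The substantive work --- and the one step I expect to cost real effort --- is the parameter bookkeeping in (b): after the flips and $k$-rescalings of Lemma \ref{l1} that recast each corollary's $H$-function into $H_{q,p}^{p,0}$-form (and, for item 2, inject the free parameter $\tau$), one must track the lists ${\bf a}_p,{\bf A}_p,{\bf b}_q,{\bf B}_q$ carefully enough to confirm the balance condition, to compute $\gamma=\min_j(a_j/A_j)$, and above all to evaluate $\mu$ and thereby decide between the strictly-positive regime of Lemma \ref{T71} and the boundary regime $\mu=0$ of Lemma \ref{l4}. Once that arithmetic is done, each of the four statements is an immediate consequence of one of those two lemmas.
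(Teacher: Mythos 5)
Your proposal follows the paper's route for items 1--3: the paper's own proof is a one-line appeal to Lemma \ref{T71} combined with the non-negativity of the relevant $H_{q,p}^{p,0}$-kernels furnished by Corollaries \ref{c1}, \ref{C2} and \ref{cc3}, and your verification of the balance condition $\sum_i A_i=\sum_j B_j$, of $\gamma\ge 1$, and of $\mu>0$ in each case is exactly the bookkeeping the paper leaves implicit. Where you depart from the paper is item 4: the paper again cites only Lemma \ref{T71}, but you correctly observe that the identity $\frac{1}{\alpha}=1+\frac{1}{\alpha\beta}$ in $(H_7)$ --- which is forced by the balance condition --- makes $\mu=0$, so the hypothesis $\mu>0$ of Lemma \ref{T71} fails and one must instead invoke Lemma \ref{l4}. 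This is the right fix: it is precisely what the paper itself does for the identical ${}_2\Psi_2$ function (there called $g_5$) in the subsequent theorem under $(H_{10})$, so your treatment of item 4 repairs a gap in the paper's stated proof rather than merely paraphrasing it. One caveat you inherit from the paper and should flag explicitly: $(H_7)$ as printed asserts both $1+\frac{1}{\alpha\beta}=\frac{1}{\alpha}$ and $1+\frac{1}{\alpha\beta}>\frac{1}{\alpha}$, so the hypothesis set is formally empty; your reading, which retains the equality (hence $\mu=0$) and discards the strict inequality, is the only one under which item 4 is non-vacuous and provable.
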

\begin{proof} The above assertions follows immediately by combining Lemma \ref{T71} with Corollary \ref{c1}, Corollary \ref{C2} and Corollary \ref{cc3} respectively, under  some restrictions on the  parameters of the Fox H-functions  which allow us to conclude that it is non-negative.
\end{proof}
\begin{theorem} Letting 
$\eta_1=\sqrt{\pi} 2^{\tau-\frac{d}{2}+\frac{1}{2}}.$
 Assume that
$$(H_8): \tau\in(0,1), d-\tau\geq1.$$
Then, the functions 
$$g_{1}:=z\mapsto{}_{2}\Psi_{1}\left[^{(\frac{d}{2}-\frac{\tau}{2},\frac{1}{2}),(1-\frac{\tau}{2},\frac{1}{2})}_{\;\;\;\;\;\;\;\;\;(\frac{d}{2}+\frac{1}{2}-\tau,1)}\Big|-z\right],\;\textrm{and}\;g_{2}:=z\mapsto{}_{2}\Psi_{1}\left[^{(\frac{d}{2}-\frac{\tau}{2},\frac{1}{2}),(1-\frac{\tau}{2},\frac{1}{2})}_{\;\;\;\;\;\;\;\;\;(\frac{d}{2}+\frac{1}{2}-\tau,1)}\Big|-z\right]-\eta_1 e^{-2 z},$$
are  completely monotonic on $(0,\infty).$
\end{theorem}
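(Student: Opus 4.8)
The plan is to apply Lemma \ref{l4} (the $\mu=0$ case of the Fox--Wright integral representation) to the specific function $g_1$. First I would identify the parameters: here $p=2$, $q=1$, with upper data $(a_1,A_1)=(\tfrac d2-\tfrac\tau2,\tfrac12)$, $(a_2,A_2)=(1-\tfrac\tau2,\tfrac12)$ and lower data $(b_1,B_1)=(\tfrac d2+\tfrac12-\tau,1)$. I must check the three hypotheses of Lemma \ref{l4}: (i) $\mu=0$, (ii) $\gamma=\min_j(a_j/A_j)\ge 1$, and (iii) $\sum_i A_i=\sum_j B_j$. For (iii), $\sum A_i=\tfrac12+\tfrac12=1=B_1$, so it holds automatically. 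For (i), $\mu=\sum b_j-\sum a_k+\tfrac{p-q}{2}=(\tfrac d2+\tfrac12-\tau)-(\tfrac d2-\tfrac\tau2+1-\tfrac\tau2)+\tfrac12=0$ after simplification, so it holds. For (ii), $a_1/A_1=d-\tau$ and $a_2/A_2=2-\tau$; under $(H_8)$ we have $d-\tau\ge1$ and $\tau\in(0,1)$ gives $2-\tau>1$, so $\gamma=\min(d-\tau,2-\tau)\ge1$. Thus all hypotheses of Lemma \ref{l4} are satisfied.

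Next I would verify the non-negativity of $H_{q,p}^{p,0}[\cdot]=H_{1,2}^{2,0}[\cdot]$ with the corresponding parameters, since this is the extra hypothesis required for the completely monotonic conclusion in Lemma \ref{l4}. This is precisely the second function listed in Corollary \ref{c1}, taken with $\gamma$-parameter equal to $1$ (indeed in that corollary one has upper datum $(\beta-\alpha,\tfrac{\alpha}{\tau})$ and lower data $(\tfrac d2-\tfrac\tau2,\tfrac12),(1-\tfrac\tau2,\tfrac12)$; matching the exponent structure and using Property~2 of Lemma \ref{l1} to rescale, the function $H_{1,2}^{2,0}\big[r\,\big|\,^{(\frac d2+\frac12-\tau,1)}_{(\frac d2-\frac\tau2,\frac12),(1-\frac\tau2,\frac12)}\big]$ appears, cf.\ also the non-negativity statement used in the proof of Corollary \ref{c79}). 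So under $(H_8)$ this $H$-function is non-negative.

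With both ingredients in hand, Lemma \ref{l4} yields directly that the function $z\mapsto{}_{2}\Psi_{1}\big[^{(\frac{d}{2}-\frac{\tau}{2},\frac{1}{2}),(1-\frac{\tau}{2},\frac{1}{2})}_{\;(\frac{d}{2}+\frac{1}{2}-\tau,1)}\big|-z\big]$ is completely monotonic on $(0,\infty)$, which is $g_1$, and also that $g_1(z)-\eta e^{-\rho z}$ is completely monotonic, where by Lemma \ref{l4}, $\eta=(2\pi)^{(p-q)/2}\prod_{i=1}^p A_i^{a_i-1/2}\prod_{j=1}^q B_j^{1/2-b_j}$ and $\rho=\prod_i A_i^{-A_i}\prod_j B_j^{B_j}$. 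I would compute $\rho=(\tfrac12)^{-1/2}(\tfrac12)^{-1/2}\cdot 1^{1}=2$, explaining the $e^{-2z}$, and $\eta=(2\pi)^{1/2}\cdot(\tfrac12)^{(d/2-\tau/2)-1/2}\cdot(\tfrac12)^{(1-\tau/2)-1/2}=\sqrt{2\pi}\,(\tfrac12)^{(d-1)/2-\tau+1/2}=\sqrt{\pi}\,2^{\tau-d/2+1/2}=\eta_1$, matching the stated constant. This identifies $g_2$ as exactly the function $g_1(z)-\eta_1 e^{-2z}$ from Lemma \ref{l4}, so it too is completely monotonic.

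The main obstacle is purely bookkeeping: one must carefully confirm that $\mu=0$ for these parameters and that the $H_{1,2}^{2,0}$ function in question really is the non-negative function supplied by Corollary \ref{c1} (after the rescaling of Property~2 of Lemma \ref{l1}), and then verify that the constants $\rho$ and $\eta$ produced by Lemma \ref{l4} evaluate to $2$ and $\eta_1$ respectively. There is no genuine analytic difficulty beyond these parameter identifications; once they are in place the result is an immediate application of Lemma \ref{l4}.
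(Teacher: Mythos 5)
Your proposal follows essentially the same route as the paper: the paper's proof likewise invokes the non-negativity of the $H_{1,2}^{2,0}$ function coming from the second statement of Corollary \ref{c1} (with the single parameter pair specialized to $(\tfrac d2+\tfrac12-\tau,1)$) and then applies Lemma \ref{l4} under $(H_8)$ to conclude that $g_1$ and $g_2$ are completely monotonic. Your write-up is in fact more detailed than the paper's, since you explicitly verify $\mu=0$, $\gamma\geq1$, $\sum A_i=\sum B_j$, and compute $\rho=2$ and $\eta=\eta_1$, which the paper leaves implicit.
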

\begin{proof} We consider the second function defined in Corollary \ref{c1}. In particular, the function 
$$H_{1,2}^{2,0}\left[r\Big|^{(\frac{d}{2}-\frac{\tau}{2},\frac{1}{2}),(1-\frac{\tau}{2},\frac{1}{2})}_{\;\;\;\;\;\;\;(\frac{d}{2}+\frac{1}{2}-\tau,1)}\right],$$ is non-negative. Here, the  parameters of the above function and the hypotheses $(H_8)$ satisfies the hypotheses of Lemma \ref{l4}. Now, applying  Lemma \ref{l4} we obtain that the function $g_1$ and $g_2$ are completely monotonic on $(0,\infty).$
\end{proof}

\begin{theorem}Let 
$$\eta_2=\sqrt{2\pi}\left(\frac{1}{2}\right)^{\frac{d}{2}-\frac{3}{2}}\left(\frac{1-\alpha}{2\alpha}\right)^{\frac{2+2\gamma(1-\alpha)-\alpha}{2\alpha}}\left(\frac{1}{2\alpha}\right)^{\gamma(\alpha-1)-\frac{1}{2}},\;\rho_2=\sqrt{2}\left(\frac{1}{2\alpha}\right)^{\frac{1}{2\alpha}}\left(\frac{1-\alpha}{2\alpha}\right)^{\frac{\alpha-1}{2\alpha}}.$$
Assume that the hypotheses
\begin{displaymath}
(H_9): \left\{ \begin{array}{ll}\alpha\in [\frac{3-\sqrt{5}}{2},1), \gamma\in\mathbb{R},\\
\;\min\left(d-2, 2\left(\gamma+\frac{1}{1-\alpha}\right)\right)\geq1,\\
\;\frac{1+\gamma(1-\alpha)^2}{\alpha}+\frac{d}{2}=\frac{5}{2},
\end{array} \right.
\end{displaymath}
holds true. Then, the functions
$$g_3:=z\mapsto{}_{2}\Psi_{1}\left[^{(\frac{d}{2}-1,\frac{1}{2}),(\frac{1+\gamma(1-\alpha)}{\alpha},\frac{1-\alpha}{2\alpha})}_{\;\;\;\;\;\;\;\;\;(1+\gamma(1-\alpha),\frac{1}{2\alpha})}\Big|-z\right],\;\;g_4:=z\mapsto{}_{2}\Psi_{1}\left[^{(\frac{d}{2}-1,\frac{1}{2}),(\frac{1+\gamma(1-\alpha)}{\alpha},\frac{1-\alpha}{2\alpha})}_{\;\;\;\;\;\;\;\;\;(1+\gamma(1-\alpha),\frac{1}{2\alpha})}\Big|-z\right]-\eta_2 e^{-\rho_2 z},$$
are completely monotonic on $(0,\infty).$
\end{theorem}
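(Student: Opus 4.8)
The plan is to re-run the proof of the preceding theorem almost verbatim, replacing the use of Lemma~\ref{T71} by Lemma~\ref{l4}, which is its analogue in the borderline case $\mu=0$.

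First I would invoke the first assertion of Corollary~\ref{C2} with the specialization $\beta=\frac{1}{1-\alpha}$. Substituting this value into the parameter pairs $\left(1+\frac{\gamma}{\beta},\frac{1}{2\alpha}\right)$ and $\left(\frac{\gamma+\beta}{\alpha\beta},\frac{1}{2\alpha\beta}\right)$ appearing there yields $\left(1+\gamma(1-\alpha),\frac{1}{2\alpha}\right)$ and $\left(\frac{1+\gamma(1-\alpha)}{\alpha},\frac{1-\alpha}{2\alpha}\right)$, so that under $(H_9)$ the Fox H-function
$$H_{1,2}^{2,0}\left[t\Big|^{(1+\gamma(1-\alpha),\frac{1}{2\alpha})}_{(\frac{d}{2}-1,\frac{1}{2}),(\frac{1+\gamma(1-\alpha)}{\alpha},\frac{1-\alpha}{2\alpha})}\right]$$
is non-negative on $(0,\infty)$; this is precisely the kernel already used in Corollary~\ref{ccc5}. (The positivity constraints $\alpha\in(0,1]$, $\frac{1}{\alpha}-1<\beta$, $\gamma\in\mathbb{R}$ of Corollary~\ref{C2} do follow from $(H_9)$: the lower bound $\alpha\geq\frac{3-\sqrt{5}}{2}$ forces $(1-\alpha)^2\leq\alpha$, i.e. $\frac{1}{\alpha}-1\leq\frac{1}{1-\alpha}=\beta$.)

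Next I would identify this H-function with the kernel $H_{q,p}^{p,0}[.]$ that Lemma~\ref{l4} attaches to the Fox--Wright function $g_3$, so that here $p=2$, $q=1$, with top parameters $(a_1,A_1)=(\frac{d}{2}-1,\frac{1}{2})$, $(a_2,A_2)=(\frac{1+\gamma(1-\alpha)}{\alpha},\frac{1-\alpha}{2\alpha})$ and bottom parameter $(b_1,B_1)=(1+\gamma(1-\alpha),\frac{1}{2\alpha})$, and then verify the three standing hypotheses of Lemma~\ref{l4}. The condition $\sum_i A_i=\sum_j B_j$ is the identity $\frac{1}{2}+\frac{1-\alpha}{2\alpha}=\frac{1}{2\alpha}$; the condition $\min_{1\leq j\leq p}(a_j/A_j)\geq1$ becomes $\min\left(d-2,\,2\left(\gamma+\frac{1}{1-\alpha}\right)\right)\geq1$, which is the second line of $(H_9)$; and the condition $\mu=0$ follows from the computation
$$\mu=b_1-a_1-a_2+\tfrac{1}{2}=\frac{5}{2}-\frac{d}{2}-\frac{1+\gamma(1-\alpha)^2}{\alpha},$$
so that $\mu=0$ is exactly the equality $\frac{1+\gamma(1-\alpha)^2}{\alpha}+\frac{d}{2}=\frac{5}{2}$ imposed in $(H_9)$.

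Finally I would substitute these parameters into the closed forms of the constants from Lemma~\ref{l4}, namely $\eta=(2\pi)^{\frac{p-q}{2}}\prod_i A_i^{a_i-\frac{1}{2}}\prod_j B_j^{\frac{1}{2}-b_j}$ and $\rho=\left(\prod_i A_i^{-A_i}\right)\left(\prod_j B_j^{B_j}\right)$; a short simplification shows that these reduce exactly to the constants $\eta_2$ and $\rho_2$ written in the statement. Since the associated kernel $H_{1,2}^{2,0}[.]$ is non-negative, the final assertion of Lemma~\ref{l4} then gives at once that both $g_3$ and $g_3-\eta_2 e^{-\rho_2 z}=g_4$ are completely monotonic on $(0,\infty)$, which is the claim. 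There is no genuinely hard step: the whole argument is bookkeeping, and the only points demanding care are the matching of the kernel to the first function of Corollary~\ref{C2} via $\beta=\frac{1}{1-\alpha}$, the verification that $(H_9)$ translates term-by-term into the hypotheses of Lemma~\ref{l4} --- in particular that the \emph{equality} constraint there is precisely $\mu=0$, not the strict inequality $\mu>0$ of Lemma~\ref{T71} --- and the check that the explicit constants $\eta_2$, $\rho_2$ coincide with those output by the formulas of Lemma~\ref{l4}.
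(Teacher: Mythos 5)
Your proposal is correct and follows essentially the same route as the paper: the paper likewise obtains the non-negativity of the kernel $H_{1,2}^{2,0}$ from the first function of Corollary~\ref{C2} (with the specialization $\beta=\frac{1}{1-\alpha}$) and then applies Lemma~\ref{l4} under $(H_9)$ to conclude that $g_3$ and $g_4$ are completely monotonic. Your write-up actually supplies the bookkeeping the paper omits (the identification $\mu=0$ with the equality in $(H_9)$, the $\min(a_j/A_j)\geq1$ check, and the verification that $\eta_2,\rho_2$ match the constants of Lemma~\ref{l4}).
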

\begin{proof}By using the Corollary \ref{C2}, the function
$$H_{1,2}^{2,0}\left[ r\Big|_{(\frac{d}{2}-1,\frac{1}{2}),(\frac{1+\gamma(1-\alpha)}{\alpha},\frac{1-\alpha}{2\alpha})}^{\;\;\;\;(1+\gamma(1-\alpha),\frac{1}{2\alpha})}\right]\;r>0,$$ 
is non-negative and your parameters with hypotheses $(H_9)$ satisfies the statements of Lemma \ref{l4}, which yields that the function $g_3$ and $g_4$ are completely monotonic on $(0,\infty).$
\end{proof}

\begin{theorem} Let
$$\eta_3=\sqrt{2}\left(\frac{1}{2\alpha}\right)^{-\frac{1}{2}-\frac{\gamma}{\beta}}\left(\frac{1}{2\alpha\beta}\right)^{\frac{\gamma+\beta}{\alpha\beta}-\frac{1}{2}},\rho_3=\left(\frac{1}{2}\right)^{\frac{1}{2}}\left(\frac{1}{2\alpha}\right)^{\frac{1}{2\alpha}}\left(\frac{1}{2\alpha\beta}\right)^{\frac{1}{2\alpha\beta}},$$
such that the following hypotheses 
\begin{displaymath}
(H_{10}): \left\{ \begin{array}{ll}
\alpha\in(0,1], \frac{1}{\alpha}-1<\beta,\;\gamma\in\mathbb{R}\\\;\;\;\;\; 2(\gamma+\beta)\geq1,\\
1+\frac{\gamma}{\beta}\left(1-\frac{1}{\alpha}\right)=1+\frac{1}{\alpha\beta}=\frac{1}{\alpha},
\end{array} \right.
\end{displaymath}
hold true. Then the functions
$$g_5:=z\mapsto {}_2\Psi_2\left[^{(1,1),(\frac{\gamma+\beta}{\alpha\beta},\frac{1}{2\alpha\beta})}_{\;\;\;\;(1+\frac{\gamma}{\beta},\frac{1}{2\alpha}),(1,\frac{1}{2})}\Big|-z\right],\;g_6:=z\mapsto {}_2\Psi_2\left[^{(1,1),(\frac{\gamma+\beta}{\alpha\beta},\frac{1}{2\alpha\beta})}_{\;\;\;\;(1+\frac{\gamma}{\beta},\frac{1}{2\alpha}),(1,\frac{1}{2})}\Big|-z\right]-\eta_3 e^{-\rho_3 z},$$
are completely monotonic on $(0,\infty).$
\end{theorem}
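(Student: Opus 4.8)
The plan is to follow the same scheme that produced complete monotonicity of $g_1,g_2$ and of $g_3,g_4$ in the two preceding theorems, now fed with the first Fox H-function of Corollary \ref{cc3}. Since $(H_{10})$ contains the constraints $\alpha\in(0,1]$, $\tfrac1\alpha-1<\beta$, $\gamma\in\mathbb{R}$ of that corollary, the function
$$H_{2,2}^{2,0}\left[r\Big|_{(1,1),(\frac{\gamma+\beta}{\alpha\beta},\frac{1}{2\alpha\beta})}^{\;\;\;\;(1+\frac{\gamma}{\beta},\frac{1}{2\alpha}),(1,\frac{1}{2})}\right]$$
is non-negative on $(0,\infty)$. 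The bookkeeping point on which the whole argument hinges is that, with the numerator data $(a_1,A_1)=(1,1)$, $(a_2,A_2)=(\tfrac{\gamma+\beta}{\alpha\beta},\tfrac{1}{2\alpha\beta})$ and the denominator data $(b_1,B_1)=(1+\tfrac{\gamma}{\beta},\tfrac{1}{2\alpha})$, $(b_2,B_2)=(1,\tfrac12)$, this $H_{2,2}^{2,0}$ is precisely the kernel $H_{q,p}^{p,0}$ attached by Lemma \ref{T71} to the Fox-Wright function ${}_2\Psi_2$ that defines $g_5$.

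Next I would verify that the remaining hypotheses of Lemma \ref{l4} follow from the two identities in $(H_{10})$. With $p=q=2$, the balance condition $\sum_iA_i=\sum_jB_j$ reads $1+\tfrac{1}{2\alpha\beta}=\tfrac{1}{2\alpha}+\tfrac12$, i.e.\ $1+\tfrac{1}{\alpha\beta}=\tfrac1\alpha$; the index $\min_{1\le j\le2}(a_j/A_j)=\min\bigl(1,2(\gamma+\beta)\bigr)$ equals $1\ge1$ because $2(\gamma+\beta)\ge1$; and
$$\mu=(b_1+b_2)-(a_1+a_2)=1+\frac{\gamma}{\beta}\Bigl(1-\frac1\alpha\Bigr)-\frac1\alpha=0$$
by the relation $1+\tfrac{\gamma}{\beta}(1-\tfrac1\alpha)=\tfrac1\alpha$. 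Thus $\mu=0$, $\min_j(a_j/A_j)\ge1$, $\sum_iA_i=\sum_jB_j$ and the kernel $H_{q,p}^{p,0}$ is non-negative, so Lemma \ref{l4} applies and yields that both $z\mapsto{}_2\Psi_2[\,\cdot\,|-z]$ and $z\mapsto{}_2\Psi_2[\,\cdot\,|-z]-\eta e^{-\rho z}$ are completely monotonic on $(0,\infty)$.

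It then remains only to substitute the above parameters into the explicit formulas $\eta=(2\pi)^{(p-q)/2}\prod_iA_i^{a_i-1/2}\prod_jB_j^{1/2-b_j}$ and $\rho=\prod_iA_i^{-A_i}\prod_jB_j^{B_j}$ of Lemma \ref{l4} — here $(2\pi)^{(p-q)/2}=1$ — and to simplify, which returns the displayed constants $\eta_3$ and $\rho_3$. I do not expect a genuine analytic obstacle: the only step requiring real care is the parameter matching in the first paragraph, after which everything is the $(p,q)=(2,2)$ specialization of arguments already used above, and the conclusion that $g_5$ and $g_6$ are completely monotonic on $(0,\infty)$ follows.
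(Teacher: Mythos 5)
Your proposal is correct and follows essentially the same route as the paper, whose entire proof is to ``collect'' Lemma \ref{l4} with the first function of Corollary \ref{cc3}; the kernel matching and the verification that $(H_{10})$ yields $\mu=0$, $\min_{j}(a_j/A_j)\geq 1$ and $\sum_i A_i=\sum_j B_j$ are exactly the details the paper leaves implicit. One minor remark: substituting the parameters into $\rho=\prod_i A_i^{-A_i}\prod_j B_j^{B_j}$ gives $(2\alpha\beta)^{\frac{1}{2\alpha\beta}}\left(\frac{1}{2\alpha}\right)^{\frac{1}{2\alpha}}\left(\frac{1}{2}\right)^{\frac{1}{2}}$, so the printed $\rho_3$ (with exponent $+\frac{1}{2\alpha\beta}$ on the factor $\frac{1}{2\alpha\beta}$) appears to carry a sign typo rather than being literally what your substitution returns, while $\eta_3$ comes out exactly as displayed.
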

\begin{proof}Collecting Lemma \ref{l4} and Corollary \ref{cc3} (the first function) we confirm the stated assertion.
\end{proof}

\end{document}